\documentclass[10pt,reqno]{amsart}
\usepackage{amsfonts}
\usepackage{mathrsfs}  
\usepackage{amsthm}
\usepackage{amsxtra}
\usepackage{amssymb}
\usepackage{tikz}
\usepackage{enumitem}
\usepackage{url}
\usepackage{caption}
\usepackage{graphicx}
\usepackage{subfigure}
\usepackage{tabularx}
\usepackage{multirow,booktabs}
\usepackage{float} 
\usepackage[margin=1.55in]{geometry}
\newtheorem{theorem}{Theorem}[section]
\newtheorem{corollary}[theorem]{Corollary}

\newtheorem{lemma}[theorem]{Lemma}

\newtheorem{Assumption}{Assumption}
\newtheorem{remark}[theorem]{Remark}
\newtheorem{example}[theorem]{Example}
\newtheorem{algorithm}{Algorithm}

\newcommand{\R}{\mathbb{R}}

\graphicspath{{image/}}

\def\EE{\mathbb E}
\def\P{\mathbb P}
\def\d{\delta}
\def\l{\langle}
\def\r{\rangle}
\def\F{{\mathcal F}}

\topmargin  -1pt

\begin{document}
	
\title[ ]
{Convergence rates of Landweber-type methods for inverse problems in Banach spaces}
	

\author{Qinian Jin}
\address{Mathematical Sciences Institute, Australian National
University, Canberra, ACT 2601, Australia}
\email{qinian.jin@anu.edu.au} \curraddr{}
 
\subjclass[2010]{65J20, 65J22, 65J15, 47J06}
	
	
\keywords{ill-posed inverse problems, Landweber-type method, stochastic mirror descent method,
{\it a priori} stopping rule, the discrepancy principle, convergence rates}
	
\begin{abstract}
Landweber-type methods are prominent for solving ill-posed inverse problems in Banach 
spaces and their convergence has been well-understood. However, how to derive their convergence rates 
remains a challenging open question. In this paper, we tackle the challenge of deriving convergence 
rates for Landweber-type methods applied to ill-posed inverse problems, where forward 
operators map from a Banach space to a Hilbert space. Under a benchmark source condition, we 
introduce a novel strategy to derive convergence rates when the method is terminated by either an
{\it a priori} stopping rule or the discrepancy principle. Our results offer substantial flexibility 
regarding step sizes, by allowing the use of variable step sizes. By extending the strategy to deal with 
the stochastic mirror descent method for solving nonlinear ill-posed systems with exact data, under a
benchmark source condition we also obtain an almost sure convergence rate in terms of the number of 
iterations. 
\end{abstract}
	
\def\d{\delta}
\def\P{\mathbb{P}}
\def\l{\langle}
\def\r{\rangle}
\def\la{\lambda}
\def\EE{{\mathbb E}}
\def\R{{\mathcal R}}
\def\a{\alpha}
\def\p{\partial}
\def\ep{\varepsilon}
\def\PP{{\mathbb P}}

%
		
\maketitle

\section{\bf Introduction}
\setcounter{equation}{0}

Due to their simplicity of implementation and low computational complexity per iteration, Landweber-type 
methods are well-recognized for solving ill-posed inverse problems in Banach spaces. Although 
their convergence has been well-understood, deriving their convergence rates remains a challenging 
open question. In this paper, we will consider Landweber-type methods for solving ill-posed inverse 
problems, where the forward operators map from a Banach space to a Hilbert space, and develop novel 
strategies to tackle the challenge of deriving convergence rates under a benchmark source condition 
on sought solutions. 

We consider ill-posed inverse problems governed by the operator equation
\begin{align}\label{Land.eq}
F(x) = y,
\end{align}
where $F : \mbox{dom}(F)\subset X \to Y$ is a Fr\'{e}chet differentiable operator mapping from a Banach 
spaces $X$ to a Hilbert space $Y$, with domain $\mbox{dom}(F)$. The Fr\'{e}chet derivative of $F$ at 
$x\in \mbox{dom}(F)$ is denoted by $F'(x)$. For numerous examples of inverse problems that take the form 
of equation (\ref{Land.eq}), such as integral equations of the first kind, tomography problems, and 
parameter estimation in partial differential equations, see \cite{EHN1996,Gr1984,N2001,SKHK2012}.

We assume (\ref{Land.eq}) has a solution, i.e. 
$y \in \mbox{Ran}(F)$, the range of $F$. In practical applications, prior feature information about 
the sought solution is often available, and it is important to incorporate this information into the 
reconstruction process. Let $\R: X \to (-\infty, \infty]$ be a proper, lower semi-continuous, 
strong convex function that takes into account the available feature information. 
To reconstruct a solution of (\ref{Land.eq}) that aligns with the features described by $\R$, we start 
with an initial guess $x_0 \in X$ with $\p \R(x_0) \ne \emptyset$ and take $\xi_0 \in \p \R(x_0)$, where 
$\p \R(x_0)$ denotes the subdifferential of $\R$ at $x_0$. We want to determine a solution $x^\dag$ of 
(\ref{Land.eq}) such that 
\begin{align}\label{Land.min}
D_\R^{\xi_0}(x^\dag, x_0) = \min\left\{D_\R^{\xi_0}(x, x_0): F(x) = y\right\},
\end{align}
where $D_\R^{\xi_0}(x, x_0)$ denotes the Bregman distance induced by $\R$ at $x_0$ in the direction 
$\xi_0$; for the definition of subdifferential, Bregman distance and other relevant concepts from 
convex analysis, see \cite{Z2002}. A brief review of these concepts is provided in Section \ref{sect2}.
We are mainly interested in the situation where the problem (\ref{Land.min}) is ill-posed in the sense 
that its solution does not depend continuously on the data. 

In applications, data acquired through experiments inevitably contains noise. Assume, instead of the exact 
data $y$, we only have a noisy data $y^\d$ satisfying 
\begin{align*}
\|y^\d - y\| \le \d
\end{align*}
with a small noise level $\d>0$. Directly substituting $y$ with $y^\d$ in the minimization problem 
(\ref{Land.min}) may render the problem ill-defined, even if well-defined, the solution obtained from 
$y^\d$ may not depend continuously on the data. To effectively utilize noisy data $y^\d$ to approximate 
the solution of (\ref{Land.min}), regularization techniques must be employed to mitigate this instability. 

Although many regularization methods have been developed for solving (\ref{Land.min}) with noisy data, in this paper we will focus on the following Landweber-type method
\begin{align}\label{Land}
\begin{split}
\xi_{k+1}^\d & = \xi_k^\d - \gamma_k^\d F'(x_k^\d)^* (F(x_k^\d) - y^\d),\\
x_{k+1}^\d & = \arg\min_{x\in X} \left\{\R(x) - \l \xi_{k+1}^\d, x\r\right\},
\end{split}
\end{align}
where $x_0^\d := x_0$ and $\xi_0^\d := \xi_0$. This method builds on various efforts to extend the classical 
Landweber iteration (\cite{BH2012,Jin2016,JW2013,N2018,NS1995,RJ2020,S1996,SLS2006}) and can be interpreted 
as a mirror descent method (\cite{JLZ2023,NY1983}). Each iteration involves two steps: the first performs 
a gradient step in the dual space $X^*$ of $X$, followed by the second step, which uses the mirror map 
defined by $\R$ to pull the element back to the original primal space $X$. Under the tangential cone condition 
$$
\|F(\bar x) - F(x) - F'(x)(\bar x - x)\| \le \eta \|F(\bar x) - F(x)\|
$$
around a sought solution with $\eta \in [0, 1)$, the convergence has been proved when the iteration is terminated by {\it a priori} or {\it a posteriori} stopping rules, see \cite{HNS1995,Jin2016,JW2013,S1996} for instance.

To assess how fast the iteration converges towards the desired solution, understanding the convergence 
rates is crucial and has significant theoretical interest. For ill-posed inverse problems, deriving such 
rates typically relies on assuming appropriate source conditions are satisfied by the sought solution.
In Hilbert spaces, the linear Landweber iteration is well-studied, where it is known to be an 
order-optimal regularization method (\cite{EHN1996}). For the more general method (\ref{Land}) 
in Hilbert spaces with a nonlinear forward operator $F$ and a regularization functional 
$\R(x) = \frac{1}{2} \|x\|^2$, convergence rate analyses have been undertaken in \cite{HNS1995,N2018,NS1995}.
Specifically, in the seminal work by \cite{HNS1995}, the nonlinear Landweber iteration 
\begin{align}\label{cLand}
x_{k+1}^\d = x_k^\d - \gamma F'(x_k^\d)^*(F(x_k^\d) - y^\d)
\end{align}
in Hilbert spaces with a constant step size has been considered. Under the range invariance condition 
\begin{align}\label{RI}
F'(x) = Q_x F'(x^\dag) \quad \mbox{ and } \quad \|I - Q_x\| \le \kappa_0 \|x-x^\dag\|
\end{align}
in a neighborhood of the sought solution $x^\dag$, it has been shown that, if the method is terminated 
by the discrepancy principle 
\begin{align}\label{DP0}
\|F(x_{k_\d}^\d) - y^\d\| \le \tau \d < \|F(x_k^\d) - y^\d\|, \quad 0 \le k < k_\d
\end{align}
with suitably large $\tau>1$, then the convergence rate 
$$
\|x_{k_\d}^\d - x^\dag\| = O(\d^{2\mu/(1+2\mu)}) 
$$
holds, provided the source condition 
$$
x^\dag - x_0 = (F'(x^\dag)^* F'(x^\dag))^\mu \omega
$$
is satisfied for some $0< \mu \le 1/2$ with sufficiently small $\|\omega\|$. 

Deriving convergence rates for the method (\ref{Land}) in its full generality is a very challenging 
task. The main difficulties arise from several aspects: the possible nonlinearity of the forward operator
$F$, the non-quadratic nature of the regularization functional $\R$, the non-Hilbertian structure 
of the underlying space $X$, and the possible use of variable step-sizes $\gamma_k^\d$. When $F$
is a bounded linear operator and the step-size is constant, the convergence rate analysis on the 
corresponding method has been rigorously studied in \cite{Jin2022}. In that work, through the 
interpretation of the method as a dual gradient method, the convergence rates have been derived 
successfully under varational source conditions (\cite{HKPS2007}) when the iteration is terminated by either 
an {\it a priori} stopping rule or the discrepancy principle. The approach developed in \cite{Jin2022} 
heavily relies on the linearity of the forward operator and the constancy of the step-size. However, it 
is unclear how to extend the strategy presented in \cite{Jin2022} to address the equation (\ref{Land.eq}) 
when the forward operator is nonlinear or when variable step sizes are employed.

In this paper, we will develop entirely novel ideas to tackle the challenge of deriving convergence rate 
of the method (\ref{Land}) under the benchmark source condition
\begin{align}\label{sc0}
\xi^\dag := \xi_0 + F'(x^\dag)^* \la^\dag \in \p \R(x^\dag) \mbox{ for some } \la^\dag \in Y
\end{align}
on the sought solution $x^\dag$, where $\p \R$ denotes the subdifferential of $\R$. Note that, under 
the tangential cone condition, (\ref{Land.min}) can be equivalently stated as 
\begin{align*}
\min\left\{D_\R^{\xi_0}(x, x_0): F'(x^\dag) (x-x^\dag) = 0\right\}.
\end{align*}
The Karush-Kuhn-Tucker (KKT) condition for this problem is exactly (\ref{sc0}). In the context of ill-posed 
problems in infinite-dimensional spaces, the mere existence of a solution does not guarantee the 
satisfaction of the KKT condition. Therefore, (\ref{sc0}) is commonly referred to as a source condition. 
This source condition has been extensively utilized in deriving convergence rates for regularization 
methods of ill-posed inverse problems (\cite{BO2004,HKPS2007,Jin2022}). Under the range invariance condition 
(\ref{RI}) on $F$ and the source condition (\ref{sc0}) on $x^\dag$, we will show that 
the convergence rate
$$
\|x_{k_\d}^\d - x^\dag\| = O(\d^{1/2}) 
$$
holds if $k_\d$ is chosen by a suitable {\it a priori} stopping rule or the discrepancy principle 
(\ref{DP0}). Our results hold for any strongly convex regularization functional $\R$ with great 
flexibility on step-sizes by allowing the use of variable ones. Thus constant and adaptive 
choices of step-sizes are covered by our results. The key point for deriving our convergence rate
is the observation that, under the condition (\ref{RI}), $\xi_k^\d - \xi_0 \in \mbox{Ran}(F'(x^\dag)^*)$
for every $k$. This motivates us to reformulate the method (\ref{Land}) in an equivalent manner which 
defines an auxiliary sequence $\{\la_k^\d\}$ such that $\xi_k^\d = \xi_0 + F'(x^\dag)^* \la_k^\d$ for 
all $k$. Based on this sequence $\{\la_k^\d\}$, the source condition (\ref{sc0}), and the sequence 
$\{x_k^\d\}$, we can construct a sequence of quantities which obey a nice recursive relation. Through 
the investigation of these quantities leads us to obtain the desired convergence rate under an 
{\it a priori} stopping rule. Based on this {\it a priori} result, after a careful comparison we obtain 
the convergence rate when the method is terminated by the discrepancy principle. Furthermore, we extend 
the idea to deal with the stochastic mirror descent method, which is a stochastic version of the 
method (\ref{Land}) for solving nonlinear ill-posed systems; we consider the exact data case and obtain 
an almost sure convergence rate result under a benchmark source condition. 

This paper is organized as follows. In Section \ref{sect2} we collect some basic facts of convex analysis 
in Banach spaces. In Section \ref{sect3} we devote to deriving convergence rates of the Landweber-type 
method (\ref{Land}) when terminated by either {\it a priori} stopping rules or the discrepancy principle. In 
Section \ref{sect4} we extend the idea to derive convergence rate for the stochastic mirror descent method 
for nonlinear ill-posed systems with exact data. Finally, in Section \ref{sect5} we provide some numerical 
results to validate the theoretical results on convergence rates. 

\section{\bf Preliminaries}\label{sect2}
\setcounter{equation}{0}

In this section, we collect some basic facts on convex analysis in Banach spaces; for more 
details one may refer to \cite{Z2002}.

Throughout the paper, the same notation $\l\cdot,\cdot\r$ will be used to denote either the inner product in 
a Hilbert space or the duality pairing in a Banach space. Let $X$ be a Banach space with norm $\|\cdot\|$, 
we use $X^*$ to denote its dual space. For a convex function $f : X \to  (-\infty, \infty]$,  the set
$$
\mbox{dom}(f) := \{x \in X : f(x) < \infty\}
$$
is called the effective domain of $f$. If $\mbox{dom}(f) \ne \emptyset$, $f$ is called proper. The 
subdifferential of $f$ is the set-valued mapping $\p f: X \rightrightarrows X^*$ defined by 
$$
\p f(x) := \{\xi \in X^*: f(\bar x) - f(x) -\l \xi, \bar x-x\r\ge 0 \mbox{ for all } \bar x\in X\}
$$
for each $x \in X$. Note that $\p f(x)$ could be empty for some $x$. The domain and graph of $\p f$ are 
defined respectively as
\begin{align*}
\mbox{dom}(\p f) := \{x\in \mbox{dom}(f): \p f(x)\ne \emptyset \}
\end{align*}
and 
\begin{align*}   
\mbox{graph}(\p f) 
:= \left\{(x, \xi)\in X\times X^*: x\in \mbox{dom}(\p f) \mbox{ and } \xi \in \p f(x)\right\}.
\end{align*}
Given $x\in \mbox{dom}(f)$, an element $\xi\in \p f(x)$ is called a subgradient of $f$ at $x$. 
The Bregman distance induced by $f$ at $x$ in the direction $\xi \in \p f(x)$ is defined by
$$
D_f^\xi(\bar x, x) := f(\bar x) -  f(x) - \l \xi, \bar x - x\r,  \quad \forall \bar x \in X
$$
which is always nonnegative and satisfies the identity
\begin{equation}\label{Land.20}
D_f^{\xi_2} (x,x_2) - D_f^{\xi_1} (x, x_1) = D_f^{\xi_2} (x_1,x_2) + \l \xi_2-\xi_1, x_1-x\r
\end{equation}
for all $x\in \mbox{dom}(f)$ and $(x_1, \xi_1), (x_2, \xi_2) \in \mbox{graph}(\p f)$.

For a proper function $f : X\to (-\infty, \infty]$, its convex conjugate is defined by
$$
f^*(\xi) :=  \sup_{x\in X}  \{\l \xi, x\r - f(x)\}, \quad  \xi \in X^* 
$$
which is a convex function taking values in $(-\infty, \infty]$. If $f : X\to  (-\infty, \infty]$ 
is proper, lower semi-continuous and convex, $f^*$ is also proper and 
\begin{align}\label{Land.25}
\xi \in \p f(x) \Longleftrightarrow x\in \p f^*(\xi) \Longleftrightarrow  f(x) + f^*(\xi) = \l \xi, x\r.
\end{align}

A proper function $f : X \to (-\infty, \infty]$ is called strongly convex if there exists a
constant $\sigma>0$ such that
\begin{align}\label{Land.23}
f(t\bar x + (1-t) x) + \sigma t(1-t) \|\bar x -x\|^2 \le  tf(\bar x) + (1-t)f(x)
\end{align}
for all $\bar x, x\in \mbox{dom}(f)$ and $t\in [0, 1]$. It is easy to see that if $f : X \to (-\infty, \infty]$ 
is strongly convex in the sense of (\ref{Land.23}), then 
\begin{align}\label{Land.24}
D_f^\xi(\bar x, x) \ge \sigma \|x-\bar x\|^2
\end{align}
for all $\bar x\in \mbox{dom}(f)$, $x\in \mbox{dom}(\p f)$ and $\xi\in \p f(x)$. Furthermore, for 
a proper, lower semi-continuous, strongly convex function $f: X \to (-\infty, \infty]$ satisfying 
(\ref{Land.23}), it is known from \cite[Corollary 3.5.11]{Z2002} that $\mbox{dom}(f^*) = X^*$, 
$f^*$ is Fr\'{e}chet differentiable and its gradient $\nabla f^*$ maps $X^*$ to $X$ with
\begin{align}\label{Land.26}
\|\nabla f^*(\xi) -\nabla f^*(\eta) \| \le \frac{\|\xi-\eta\|}{2\sigma} 
\end{align}
for all $\xi, \eta \in X^*$. Consequently, it follows from (\ref{Land.25}) and (\ref{Land.26}) that
\begin{align}\label{Land.27}
D_f^\xi (\bar x,x) = f^*(\xi) - f^*(\bar \xi) - \l \xi - \bar \xi, \nabla f^*(\bar \xi)\r 
\le \frac{1}{4\sigma}\|\xi-\bar \xi\|^2
\end{align}
for any $(x, \xi), (\bar x, \bar \xi) \in \mbox{graph}(\p f)$.

\section{\bf Convergence rates of Landweber-type methods}\label{sect3}
\setcounter{equation}{0}

The convergence of general Landweber-type methods in Banach spaces, including (\ref{Land}) as a specific case, has been studied in \cite{Jin2016,JW2013}, particularly when the iteration is terminated using the discrepancy principle (\ref{DP0}). For the method (\ref{Land}), the convergence analysis relies on the following standard conditions on $\R$ and $F$.

\begin{Assumption}\label{ass0}
{\it $\R : X\to (-\infty, \infty]$ is a proper, lower semi-continuous, strongly convex function in 
the sense that there is a constant $\sigma>0$ such that
\begin{align*}
\R(t \bar x + (1-t) x) + \sigma t(1-t) \|\bar x -x\|^2 \le t \R(\bar x) + (1-t) \R(x)
\end{align*}
for all $\bar x, x\in \emph{dom}(\R)$ and $0\le t\le 1$.}
\end{Assumption}

\begin{Assumption}\label{ass1}
\begin{enumerate}[leftmargin = 0.9cm]
\item[\emph{(i)}] $X$ is a Banach space and $Y$ is a Hilbert space. 

\item[\emph{(ii)}] There exists $\rho>0$ such that $B_{2\rho}(x_0)\subset \emph{dom}(F)$ and (\ref{Land.min}) 
has a solution $x^\dag$ such that $D_\R^{\xi_0} (x^\dag, x_0) \le \frac{1}{2}\sigma \rho^2$.

\item[\emph{(iii)}] There is a constant $L$ such that $\|F'(x)\| \le L$ for all $x\in B_{2\rho}(x_0)$ and there exists $0\le \eta <1$ such that 
$$
\|F(\tilde x) - F(x) - F'(x) (\tilde x - x)\| \le \eta \|F(\tilde x) - F(x)\|
$$
for all $\tilde x, x \in B_{2\rho}(x_0)$.
\end{enumerate}
\end{Assumption}

Under Assumption \ref{ass0} and Assumption \ref{ass1}. it has been shown in \cite[Lemma 3.2]{JW2013} that 
$x^\dag$ is the unique solution of (\ref{Land.min}), and by using (\ref{Land.24}) we can conclude that 
$\|x^\dag - x_0\|^2 \le \rho^2/2<\rho^2$, i.e. $x^\dag \in B_\rho(x_0)$. Furthermore, we have the following 
convergence result. 

\begin{theorem}\label{Land.thm0}
Let Assumption \ref{ass0} and Assumption \ref{ass1} hold. Consider the Landweber-type method (\ref{Land}). 
Let $\tau >(1+\eta)/(1-\eta)$ be a given number and let $\{\gamma_k^\d\}$ be chosen by one of the following 
rules: 
\begin{enumerate}[leftmargin = 0.9cm]
\item[\emph{(i)}] $\gamma_k^\d = \gamma/L^2$ with $0<\gamma < 4\sigma(1-\eta - (1+\eta)/\tau)$; 

\item[\emph{(ii)}] $\gamma_k^\d = \min\left\{\frac{\gamma \|F(x_k^\d) - y^\d\|^2}{\|F'(x_k^\d)^*(F(x_k^\d)
- y^\d)\|^2}, \bar \gamma\right\}$ for some $\bar \gamma >0$ and $0< \gamma  < 4\sigma(1-\eta 
- (1+\eta)/\tau)$;

\item[\emph{(iii)}] $\gamma_k^\d = \min\left\{\frac{\gamma((1-\eta) \|F(x_k^\d) - y^\d\| - (1+\eta)\d) \|F(x_k^\d) - y^\d\|}{\|F'(x_k^\d)^*(F(x_k^\d) - y^\d)\|^2}, \bar \gamma\right\}$ for some $\bar \gamma >0$ and $0< \gamma < 4 \sigma$. 
\end{enumerate}
Then the discrepancy principle (\ref{DP0}) outputs a finite integer $k_\d$ and there is a solution $x^*$ of  (\ref{Land.eq}) in $B_{2\rho}(x_0) \cap \emph{dom}(\R)$ such that 
$$
\lim_{\d\to 0} \|x_{k_\d}^\d - x^*\| = 0 \quad \emph{and} \quad 
\lim_{\d \to 0} D_\R^{\xi_{k_\d}^\d}(x^*, x_{k_\d}^\d) = 0. 
$$
If in addition, $\emph{Ran}(F'(x)^*) \subset \overline{\emph{Ran}(F'(x^\dag)^*)}$ for all $x\in B_{2\rho}(x_0)$, then $x^* = x^\dag$. 
\end{theorem}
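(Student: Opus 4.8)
The plan is to follow the by-now classical three-step scheme for Landweber-type iterations, adapted to the Banach-space Bregman setting. First I would establish the fundamental monotonicity estimate. Using the three-point identity (\ref{Land.20}) with $x = x^\dag$ together with the update $\xi_{k+1}^\d - \xi_k^\d = -\gamma_k^\d F'(x_k^\d)^*(F(x_k^\d) - y^\d)$, one obtains
$$
D_\R^{\xi_{k+1}^\d}(x^\dag, x_{k+1}^\d) - D_\R^{\xi_k^\d}(x^\dag, x_k^\d) = D_\R^{\xi_{k+1}^\d}(x_k^\d, x_{k+1}^\d) - \gamma_k^\d \l F(x_k^\d) - y^\d, F'(x_k^\d)(x_k^\d - x^\dag)\r.
$$
The first term on the right is bounded by $\frac{(\gamma_k^\d)^2}{4\sigma}\|F'(x_k^\d)^*(F(x_k^\d) - y^\d)\|^2$ via (\ref{Land.27}), while the tangential cone condition in Assumption \ref{ass1}(iii), combined with $F(x^\dag) = y$ and $\|y^\d - y\| \le \d$, turns the inner product into a genuine descent term proportional to $\|F(x_k^\d) - y^\d\|^2$ whenever $\|F(x_k^\d) - y^\d\| > \tau\d$. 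A case analysis over the three step-size rules---using $\|F'(x_k^\d)^*(F(x_k^\d) - y^\d)\| \le L\|F(x_k^\d) - y^\d\|$ and the prescribed constraints relating $\gamma$ to $\sigma$, $\eta$, $\tau$---then yields a constant $c>0$ with
$$
D_\R^{\xi_{k+1}^\d}(x^\dag, x_{k+1}^\d) - D_\R^{\xi_k^\d}(x^\dag, x_k^\d) \le -c\,\gamma_k^\d \|F(x_k^\d) - y^\d\|^2 \le 0
$$
for all $k < k_\d$. This makes $\{D_\R^{\xi_k^\d}(x^\dag, x_k^\d)\}$ monotonically decreasing, so by (\ref{Land.24}) and Assumption \ref{ass1}(ii) every iterate stays in $B_{2\rho}(x_0)$ where the hypotheses apply, and summing gives $\sum_{k<k_\d}\gamma_k^\d\|F(x_k^\d) - y^\d\|^2 \le D_\R^{\xi_0}(x^\dag, x_0)/c < \infty$.

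The finiteness of $k_\d$ follows from this summability together with a positive lower bound on $\gamma_k^\d$ before stopping. For rule (i) this is immediate; for rules (ii) and (iii) I would verify $\gamma_k^\d \ge c' > 0$ for $k < k_\d$, using $\|F'(x_k^\d)^*(F(x_k^\d) - y^\d)\|^2 \le L^2\|F(x_k^\d) - y^\d\|^2$ in rule (ii), and additionally $\|F(x_k^\d) - y^\d\| > \tau\d$ with $\tau > (1+\eta)/(1-\eta)$ to bound the bracket from below in rule (iii). Were $k_\d$ infinite, the infinitely many terms with $\|F(x_k^\d) - y^\d\| > \tau\d$ would force the sum to diverge, a contradiction.

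For the convergence assertions I would first treat the exact-data case $\d = 0$ and prove $\{x_k\}$ is Cauchy. Since $\{D_\R^{\xi_k}(x^\dag, x_k)\}$ is nonincreasing and nonnegative it converges; applying (\ref{Land.20}) to a pair of indices $n<m$ gives
$$
D_\R^{\xi_n}(x_m, x_n) = D_\R^{\xi_n}(x^\dag, x_n) - D_\R^{\xi_m}(x^\dag, x_m) - \l \xi_n - \xi_m, x_m - x^\dag\r,
$$
whose first two terms vanish as $n,m\to\infty$. The cross term, expanded through the iteration, is a sum of the form $\sum_{j=n}^{m-1}\gamma_j\l F(x_j) - y, F'(x_j)(x_m - x^\dag)\r$, and the tangential cone condition bounds $\|F'(x_j)(x_m - x^\dag)\|$ by a multiple of $\|F(x_j) - y\| + \|F(x_m) - y\|$; the summability of $\sum_j \gamma_j\|F(x_j) - y\|^2$ then drives the whole expression to zero. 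By strong convexity (\ref{Land.24}) this makes $\{x_k\}$ Cauchy with limit $\hat x$, and continuity of $F$ with $\|F(x_k) - y\| \to 0$ gives $F(\hat x) = y$. The step I expect to be the main obstacle is precisely this control of the cross term uniformly in $m$: one must split $F'(x_j)(x_m - x^\dag)$ carefully and combine a Cauchy--Schwarz estimate on the residual sums with the tangential cone bound so that the tail genuinely tends to zero.

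Finally, I would pass from exact to noisy data by the standard regularization argument: for each fixed index the noisy iterate depends continuously on $y^\d$, so $x_k^\d \to x_k$ as $\d\to 0$; combining this with the monotonicity and a subsequence/diagonal argument distinguishing whether $k_\d$ stays bounded or tends to infinity shows $x_{k_\d}^\d \to \hat x =: x^*$ and $D_\R^{\xi_{k_\d}^\d}(x^*, x_{k_\d}^\d) \to 0$. For the last assertion, the update shows $\xi_k^\d - \xi_0 \in \overline{\mbox{Ran}(F'(x^\dag)^*)}$ once $\mbox{Ran}(F'(x)^*) \subset \overline{\mbox{Ran}(F'(x^\dag)^*)}$ holds for all $x \in B_{2\rho}(x_0)$, so the limiting subgradient $\xi^* \in \p\R(x^*)$ satisfies $\xi^* - \xi_0 \in \overline{\mbox{Ran}(F'(x^\dag)^*)}$; since $x^*$ solves (\ref{Land.eq}) and $x^\dag$ is the unique Bregman-minimizing solution characterized by exactly this range membership, the strong convexity of $\R$ forces $x^* = x^\dag$.
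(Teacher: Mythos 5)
A point of context first: the paper itself does not prove Theorem \ref{Land.thm0} --- it invokes the proofs in \cite{Jin2016,JW2013} for rules (i) and (ii) and remarks that the same arguments extend to rule (iii). Your outline reproduces the scheme of those cited proofs faithfully: monotonicity of $\Delta_k^\d = D_\R^{\xi_k^\d}(x^\dag,x_k^\d)$ via the identity (\ref{Land.20}), the bound (\ref{Land.27}) and the tangential cone condition; summability of $\gamma_k^\d\|F(x_k^\d)-y^\d\|^2$; a positive lower bound on $\gamma_k^\d$ before stopping (for rule (iii) using $\|F(x_k^\d)-y^\d\|>\tau\d$ with $\tau>(1+\eta)/(1-\eta)$), which forces $k_\d<\infty$; an exact-data Cauchy argument followed by a stability/diagonal argument as $\d\to 0$; and the identification $x^*=x^\dag$ through $\xi_k-\xi_0\in\overline{\mathrm{Ran}(F'(x^\dag)^*)}$ together with the uniqueness of the $D_\R^{\xi_0}$-minimal solution. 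So the route is the right one, and your sketch of every step but one would compile into the known proof.

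The exception is the Cauchy step, and the gap sits exactly where you flagged it --- but the fix you propose does not work. Pairing arbitrary $n<m$, the cross term is $\sum_{j=n}^{m-1}\gamma_j\l F(x_j)-y, F'(x_j)(x_m-x^\dag)\r$, and the tangential cone condition bounds $\|F'(x_j)(x_m-x^\dag)\|$ by a multiple of $\|F(x_j)-y\|+\|F(x_m)-y\|$, as you say. The $\|F(x_j)-y\|^2$ part is a tail of a convergent series, but the remaining part is $\|F(x_m)-y\|\sum_{j=n}^{m-1}\gamma_j\|F(x_j)-y\|$, and no Cauchy--Schwarz estimate rescues it: since $\gamma_j\ge\underline{\gamma}>0$, Cauchy--Schwarz yields only $\bigl(\sum_{j=n}^{m-1}\gamma_j\bigr)^{1/2}\bigl(\sum_{j=n}^{m-1}\gamma_j\|F(x_j)-y\|^2\bigr)^{1/2}$, whose first factor grows like $(m-n)^{1/2}$, while for nonlinear $F$ the residual $\|F(x_m)-y\|$ need not decay monotonically or at any rate compensating this growth. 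The missing idea is the minimal-residual index selection of Hanke--Neubauer--Scherzer \cite{HNS1995}, carried over to the Bregman setting in \cite{JW2013}: given $n<m$, choose $l\in\{n,\dots,m\}$ with $\|F(x_l)-y\|$ minimal, and estimate $D_\R^{\xi_n}(x_l,x_n)$ and $D_\R^{\xi_m}(x_l,x_m)$ separately via (\ref{Land.20}) with $x=x^\dag$. Each resulting cross term has summands $\gamma_j\l F(x_j)-y, F'(x_j)(x_l-x^\dag)\r$ in which the factor $\|F(x_l)-y\|\le\|F(x_j)-y\|$ can be absorbed, so every summand is dominated by $C\gamma_j\|F(x_j)-y\|^2$ and both Bregman distances are controlled by differences of the convergent sequence $\{\Delta_k\}$ plus tails of the convergent series; strong convexity (\ref{Land.24}) then makes $\{x_k\}$ Cauchy. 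Without this device the step fails as written, so you should replace the direct $(n,m)$ comparison by the $(n,l)$ and $(l,m)$ comparisons; the rest of your argument then goes through.
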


Theorem \ref{Land.thm0} with $\gamma_k^\d$ chosen by either (i) or (ii) has been proved in 
\cite{Jin2016,JW2013}, and the similar arguments there can be applied straightforwardly to 
establish the convergence result when $\gamma_k^\d$ is chosen by (iii). In the method (\ref{Land}),
the regularization functional $\R$ can be chosen in various ways to detect the features of sought 
solutions.

\begin{example}\label{ex1}
{\rm
Here are some choices of the regularization functional $\R$ that are useful in detecting features 
of sought solutions in applications. 

\begin{enumerate}[leftmargin = 0.9cm]
\item[(a)] When $X$ is a Hilbert space and the sought solution lies in a closed convex set $C$, 
we may take 
$$
\R(x) := \frac{1}{2} \|x\|^2 + \iota_C(x), 
$$
where $\iota_C$ denote the indicator function of $C$, i.e. $\iota_C(x) = 0$ if $x \in C$ and 
$\iota_C(x) = \infty$ otherwise. It is clear that this $\R$ satisfies Assumption \ref{ass0} 
with $\sigma = 1/2$. 

\item[(b)] When the sought solution lies in $L^2(\Omega)$ with sparsity feature, where 
$\Omega\subset {\mathbb R}^d$ is a bounded domain, we may take 
$$
\R(x) := \frac{1}{2} \int_\Omega |x(\omega)|^2 d \omega + \beta \int_\Omega |x(\omega)| d\omega, \quad 
x \in L^2(\Omega), 
$$
where $\beta>0$ is a large number. Clearly, this $\R$ satisfies 
Assumption \ref{ass0} with $\sigma = 1/2$.

\item[(c)] When the sought solution is piece-wise constant on some bounded domain 
$\Omega \subset {\mathbb R}^d$, we may use 
$$
\R(x) := \frac{1}{2} \int_\Omega |x(\omega)|^2 d\omega + \beta \int_\Omega |Dx|
$$
for some large number $\beta>0$, where $\int_\Omega|Dx|$ denotes the total variation of $x$ over $\Omega$ 
defined by (\cite{ABM2006})
$$
\int_\Omega |Dx| := \sup \left\{ \int_\Omega x \mbox{div} {\bf u} dx: {\bf u} \in C_0^1(\Omega, {\mathbb R}^d) 
\mbox{ and } \|{\bf u}\|_{L^\infty(\Omega)} \le 1\right\}.
$$
It is easy to show that this $\R$ satisfies Assumption \ref{ass0} with $\sigma = 1/2$. 

\item[(d)] When the sought solution $x^\dag$ is a probability density function over a bounded domain 
$\Omega \subset {\mathbb R}^d$, i.e. $x^\dag \ge 0$ a.e. on $\Omega$ and $\int_\Omega x = 1$, we may take 
$$
\R(x) := f(x) + \iota_\Delta(x),
$$
where $\iota_\Delta$ denotes the indicator function of 
$$
\Delta:= \left\{ x\in L_+^1(\Omega): \int_\Omega x^\dag = 1\right\}
$$
and 
$$
f(x) = \left\{\begin{array}{lll}
\int_\Omega x \log x, & \mbox{ if } x \in L_{+}^1(\Omega) \mbox{ and } x\log x \in L^1(\Omega), \\
+ \infty, & \mbox{ otherwise}
\end{array}\right. 
$$
is the negative of the Boltzmann-Shannon entropy. Here 
$L_{+}^1(\Omega) := \{x\in L^1(\Omega): x\ge 0 \mbox{ a.e. on } \Omega\}$. It is known that $\R$ 
satisfies Assumption \ref{ass0} with $\sigma = 1/2$; see \cite{BL1991,E1993,EL1993,Jin2022}. 
\end{enumerate}
}
\end{example}

The performance of the method (\ref{Land}) has been demonstrated in \cite{Jin2016,JW2013} through 
various numerical experiments. These results indicate that the method (\ref{Land}) not only captures 
the features of the sought solution with a suitable choice of $\R$, but also enjoys a certain convergence 
speed when the sought solutions exhibit desirable properties. This raises the natural question of whether 
it is possible to establish convergence rates for the method (\ref{Land}) when the sought solutions 
satisfy specific source conditions. In this section, we will derive the convergence rate for the method 
(\ref{Land}) under the assumption that the sought solution satisfies the benchmark source condition 
(\ref{sc0}), provided that $F$ meets the following additional condition.

\begin{Assumption}\label{ass1.5}
There exists $\kappa_0\ge 0$ such that for any $x \in B_{2\rho}(x_0)$ there is a bounded 
linear operator $Q_x: Y \to Y$ such that 
$$
F'(x) = Q_x F'(x^\dag)   \quad \mbox{and} \quad \|I - Q_x\| \le \kappa_0 \|x - x^\dag\|.
$$
\end{Assumption}

Assumption \ref{ass1.5} is actually stronger than condition (iii) in Assumption \ref{ass1}. Indeed, using  
similar arguments as in \cite{HNS1995,N2018} we can conclude that, if $8\kappa_0 \rho <1$, then  
$$
\|F(\tilde x) - F(x) - F'(x) (\tilde x - x)\|\le \eta \|F(\tilde x) - F(x)\|
$$
for all $\tilde x,x \in B_{2\rho}(x_0)$, where $\eta := 5\kappa_0 \rho/(1 - 8 \kappa_0 \rho)$. This implies 
that condition (iii) in Assumption \ref{ass1} holds with a small $\eta$, provided that $\kappa_0\rho$ is 
sufficiently small. 

It should be noted that when $F$ is a bounded linear operator, Assumption \ref{ass1.5} holds automatically 
with $\kappa_0 = 0$. Moreover, \cite{HNS1995,SEK1993} provide examples of nonlinear ill-posed inverse 
problems that satisfy Assumption \ref{ass1.5}, including those arising from nonlinear integral equations 
of the first kind and parameter identification in partial differential equations. Below we will provide one 
more example that satisfies Assumption \ref{ass1.5}. 

\begin{example}
{\rm
Let $\Omega\subset {\mathbb R}^d$ be a bounded domain with Lipschitz boundary $\p \Omega$. We consider the 
Neumann boundary value problem of the semi-linear elliptic equation
\begin{align}\label{NBP}
- \triangle y + g(y) = x \,\,\, \mbox{ in } \Omega, \qquad \p_\nu y = 0 \,\,\, \mbox{ on } \p \Omega, 
\end{align}
where $\p_\nu y$ denotes the normal derivative of $y$ in the direction of the unit outward normal $\nu$ to 
$\p \Omega$ and $g(y)$ is a given function defined on ${\mathbb R}$ satisfying the following properties:
\begin{enumerate}[leftmargin = 0.9cm]
\item[$\bullet$] $g \in C^1({\mathbb R})$ and there is a constant $\mu_0>0$ such that $g'(y) \ge \mu_0$ 
for all $y \in {\mathbb R}$.

\item[$\bullet$] $g'$ is locally Lipschitz continuous, i.e. for any $M>0$ there is $L_M\ge 0$ such that 
$$
|g'(y_1) - g'(y_2)| \le L_M|y_1 - y_2|
$$
for all $y_1, y_2\in {\mathbb R}$ satisfying $|y_1|, |y_2| \le M$. 
\end{enumerate}
Let $r>d/2$ be a fixed number. We consider the inverse problem of determining the source term 
$x\in L^r(\Omega)$ from an $L^2(\Omega)$-measurement of $y$. According to 
\cite[Theorem 4.7 \& Theorem 4.16]{T2010}, for each $x \in L^r(\Omega)$ problem (\ref{NBP}) has a unique 
weak solution $y_x \in H^1(\Omega) \cap C(\overline{\Omega})$ 
and for any $\tilde x, x\in L^r(\Omega)$ there holds
\begin{align}\label{NBP.2}
\|y_{\tilde x} - y_x\|_{H^1(\Omega)} + \|y_{\tilde x} - y_x\|_{C(\overline{\Omega})} 
\le C\|\tilde x - x\|_{L^r(\Omega)}, 
\end{align}
where $C$ denotes a generic constant depending only on $\mu_0$ and $\Omega$. Since 
$H^1(\Omega) \hookrightarrow L^2(\Omega)$, it makes sense to define the operator 
$F: L^r(\Omega) \to L^2(\Omega)$ by $F(x) := y_x$ for any $x \in L^r(\Omega)$ and thus our inverse 
problem reduces to the form (\ref{Land.eq}). 

According to \cite[Theorem 4.17]{T2010}, this $F$ is Fr\'{e}chet differentiable, and for any 
$x \in L^r(\Omega)$, its Fr\'{e}chet derivative $F'(x)$ is an operator from $L^r(\Omega)$ to 
$H^1(\Omega) \subset L^2(\Omega)$. For each $h \in L^r(\Omega)$, $v:= F'(x) h$ 
is the unique weak solution of 
\begin{align*}
-\triangle v + g'(y_x)v = h \,\,\, \mbox{ in } \Omega, \qquad 
\p_\nu v = 0 \,\,\, \mbox{ on } \p \Omega.
\end{align*}
Next, we show that this operator $F$ satisfies Assumption \ref{ass1.5}. To see this, we consider the 
linear subspace 
$$
S:= \{u\in H^1(\Omega): \p_\nu u =0 \mbox{ on } \p \Omega\}
$$
of $H^1(\Omega)$. Let $x_0\in L^r(\Omega)$ be fixed and let $\rho>0$ be a fixed number.  For any 
$\tilde x, x\in B_\rho(x_0) := \{z \in L^r(\Omega): \|z - x_0\|_{L^r(\Omega)} < \rho\}$, we define 
the linear operator $Q_{\tilde x,x}: S\subset H^1(\Omega) \to H^1(\Omega)\subset L^2(\Omega)$, where,
for each $v\in S$, $w:= Q_{\tilde x, x} v \in H^1(\Omega)$ is the unique weak solution of the linear 
elliptic problem 
\begin{align}\label{NBP.5}
-\triangle w + g'(y_{\tilde x}) w = -\triangle v + g'(y_{x}) v \,\,\, \mbox{ in } \Omega, \qquad
\p_\nu w = 0 \,\,\, \mbox{ on } \p\Omega. 
\end{align}
Since $\p_\nu v = 0$ on $\p\Omega$, we can write
\begin{align*}
\left\{\begin{array}{lll}
&-\triangle (w - v) + g'(y_{\tilde x}) (w - v) = ( g'(y_x) - g'(y_{\tilde x})) v \,\,\, \mbox{ in } \Omega, \\
&\p_\nu (w - v) = 0 \,\,\, \mbox{ on } \p\Omega. 
\end{array}\right.
\end{align*}
By the theory of linear elliptic equations, we conclude that
\begin{align*}
\|w-v\|_{H^1(\Omega)} \le C\| ( g'(y_x) - g'(y_{\tilde x})) v\|_{L^2(\Omega)}
\le  \|g'(y_x) - g'(y_{\tilde x})\|_{L^\infty(\Omega)} \|v\|_{L^2(\Omega)}.
\end{align*}
By virtue of (\ref{NBP.2}) and the local Lipschitz continuity of $g'$, there exists a constant $C_\rho$ 
depending on $\Omega$, $\mu_0$, $x_0$ and $\rho$ such that 
\begin{align*}
\|w-v\|_{L^2(\Omega)} \le \|w-v\|_{H^1(\Omega)} \le C_\rho \|\tilde x-x\|_{L^r(\Omega)} \|v\|_{L^2(\Omega)}, 
\quad \forall v \in S. 
\end{align*}
Since $C_0^\infty(\Omega) \subset S$, $S$ is dense in $L^2(\Omega)$.  Thus, the above inequality 
implies that $Q_{\tilde x, x}$ can be extended to a bounded linear operator from $L^2(\Omega)$ to itself and 
\begin{align}\label{NBP.6}
\|I - Q_{\tilde x, x}\|_{L^2(\Omega)\to L^2(\Omega)} \le C_\rho\|\tilde x - x\|_{L^r(\Omega)}.
\end{align}
Note that, for any $h \in L^r(\Omega)$ we have $v:=F'(x) h \in S$, and for the function $w$ defined by 
(\ref{NBP.5}) we have $w = F'(\tilde x) h$. Consequently
$$
F'(\tilde x) h = w = Q_{\tilde x, x}v = Q_{\tilde x, x} F'(x) h, \quad \forall h \in L^r(\Omega)
$$
which shows that $F'(\tilde x) = Q_{\tilde x, x} F'(x)$. This, together with (\ref{NBP.6}), shows that $F$ 
satisfies Assumption \ref{ass1.5}. 
}
\end{example}

\subsection {\bf Some key estimates}

Based on Assumptions \ref{ass0}--\ref{ass1.5}, our aim is to derive convergence rates of the 
Landweber-type method (\ref{Land}) under the benchmark source condition (\ref{sc0}) on the sought 
solution $x^\dag$, when the method is terminated by either an {\it a priori} stopping rule or the 
discrepancy principle. 

In this subsection we will establish some key estimates that will be used in the subsequent analysis. 
Note from the definition of $x_k^\d$ in (\ref{Land}) that $\xi_k^\d \in \p \R(x_k^\d)$. We first show that,
up to a preassigned stopping index, the iterates always stay in $B_{2\rho}(x_0)$ and 
the Bregman distances $\{D_\R^{\xi_k^\d}(x^\dag, x_k^\d)\}$ obey a useful recursive inequality.  

\begin{lemma}\label{Land:lem1}
Let Assumption \ref{ass0} and Assumption \ref{ass1} hold. Suppose $\gamma_k^\d$ is chosen such 
that 
\begin{align}\label{gamma}
\underline{\gamma} \le \gamma_k^\d 
\le \min\left\{\frac{\gamma\|F(x_k^\d)-y^\d\|^2}{\|F'(x_k^\d)^*(F(x_k^\d)-y^\d)\|^2}, \bar \gamma \right\}
\end{align}
for some positive constants $\underline{\gamma}$, $\gamma$ and $\bar \gamma$ with 
\begin{align}\label{gamma0}
c_0 := \frac{1}{2} \left(1-\eta - \frac{\gamma}{4\sigma}\right) >0. 
\end{align}
Let $\hat k_\d$ be an integer such that $c_1 \d^2 \hat k_\d < \frac{1}{2} \sigma \rho^2$, where 
$c_1 := (1+\eta)^2 \bar \gamma/(4c_0)$. Then  
$x_k^\d \in B_{2\rho}(x_0)$ for $0\le k \le \hat k_\d$ and 
\begin{align}\label{Land.28}
\Delta_{k+1}^\d - \Delta_k^\d \le - c_0 \gamma_k^\d \|F(x_k^\d) - y^\d\|^2 + c_1 \d^2
\end{align}
for all integers $0 \le k < \hat k_\d$, where $\Delta_k^\d:= D_\R^{\xi_k^\d}(x^\dag, x_k^\d)$.  
\end{lemma}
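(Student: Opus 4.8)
The plan is to combine the three-point identity (\ref{Land.20}) for Bregman distances with the tangential cone condition to produce the one-step estimate (\ref{Land.28}), and then to run an induction on $k$ that simultaneously controls the location of the iterates and accumulates this estimate. The key structural point is that the two conclusions are entangled: the derivation of (\ref{Land.28}) at index $k$ presupposes $x_k^\d \in B_{2\rho}(x_0)$, so that Assumption \ref{ass1}(iii) may be invoked with $\tilde x = x^\dag$ and $x = x_k^\d$; conversely, the containment $x_{k+1}^\d \in B_{2\rho}(x_0)$ follows only after one has summed (\ref{Land.28}). Hence both statements must be carried as a joint induction hypothesis.

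First I would derive the one-step inequality assuming $x_k^\d \in B_{2\rho}(x_0)$. Applying (\ref{Land.20}) with $x = x^\dag$, $(x_1,\xi_1) = (x_k^\d,\xi_k^\d)$ and $(x_2,\xi_2) = (x_{k+1}^\d,\xi_{k+1}^\d)$ gives
\begin{align*}
\Delta_{k+1}^\d - \Delta_k^\d = D_\R^{\xi_{k+1}^\d}(x_k^\d, x_{k+1}^\d) + \l \xi_{k+1}^\d - \xi_k^\d, x_k^\d - x^\dag\r .
\end{align*}
For the first term I would use (\ref{Land.27}) with the dual update $\xi_{k+1}^\d - \xi_k^\d = -\gamma_k^\d F'(x_k^\d)^*(F(x_k^\d)-y^\d)$ and the upper bound in (\ref{gamma}) to obtain $D_\R^{\xi_{k+1}^\d}(x_k^\d, x_{k+1}^\d) \le \frac{\gamma \gamma_k^\d}{4\sigma}\|F(x_k^\d)-y^\d\|^2$. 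For the second term, writing $F'(x_k^\d)(x^\dag - x_k^\d) = (y - F(x_k^\d)) - e_k$ with $e_k := F(x^\dag) - F(x_k^\d) - F'(x_k^\d)(x^\dag - x_k^\d)$, the tangential cone bound $\|e_k\| \le \eta \|F(x_k^\d)-y\|$, the splitting $F(x_k^\d)-y = (F(x_k^\d)-y^\d)+(y^\d-y)$ and $\|y^\d-y\|\le\d$ lead to
\begin{align*}
\l \xi_{k+1}^\d - \xi_k^\d, x_k^\d - x^\dag\r \le -\gamma_k^\d (1-\eta) \|F(x_k^\d)-y^\d\|^2 + \gamma_k^\d (1+\eta)\d\|F(x_k^\d)-y^\d\| .
\end{align*}
Summing the two terms and recalling $2c_0 = (1-\eta) - \gamma/(4\sigma)$, a Young inequality $\gamma_k^\d(1+\eta)\d\|F(x_k^\d)-y^\d\| \le c_0\gamma_k^\d\|F(x_k^\d)-y^\d\|^2 + \frac{(1+\eta)^2 \gamma_k^\d}{4c_0}\d^2$ absorbs half of the negative quadratic term and, together with $\gamma_k^\d \le \bar\gamma$, yields exactly (\ref{Land.28}). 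Note that only the upper bounds on $\gamma_k^\d$ enter here; the lower bound $\underline{\gamma}$ is not needed for this lemma.

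With the one-step estimate available, I would close the induction. The base case $x_0^\d = x_0 \in B_{2\rho}(x_0)$ is immediate. Assuming $x_j^\d \in B_{2\rho}(x_0)$ for all $0\le j \le k$ with $k < \hat k_\d$, estimate (\ref{Land.28}) holds at each such $j$; telescoping and discarding the nonpositive residual sum gives $\Delta_{k+1}^\d \le \Delta_0^\d + (k+1)c_1\d^2$. Since $\Delta_0^\d = D_\R^{\xi_0}(x^\dag,x_0) \le \frac{1}{2}\sigma\rho^2$ by Assumption \ref{ass1}(ii) and $(k+1)c_1\d^2 \le \hat k_\d c_1\d^2 < \frac{1}{2}\sigma\rho^2$ by hypothesis, we obtain $\Delta_{k+1}^\d < \sigma\rho^2$. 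The strong convexity bound (\ref{Land.24}) then gives $\|x_{k+1}^\d - x^\dag\| < \rho$, and combining with $\|x^\dag - x_0\| \le \rho/\sqrt{2}$ (established just before the statement) the triangle inequality yields $\|x_{k+1}^\d - x_0\| < (1+1/\sqrt{2})\rho < 2\rho$, completing the induction and hence both conclusions.

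The main obstacle is precisely this circular dependence between the recursive inequality and the ball-containment, which forces the two to be proved together rather than in sequence. The delicate quantitative point in the argument is the bookkeeping of the error budget: the accumulated noise contribution $(k+1)c_1\d^2$ must be kept strictly below $\frac{1}{2}\sigma\rho^2$ so that, added to the initial Bregman distance, the total stays under $\sigma\rho^2$ and the strong convexity bound confines $x_{k+1}^\d$ to $B_\rho(x^\dag)$; this is exactly what the hypothesis $c_1\d^2\hat k_\d < \frac{1}{2}\sigma\rho^2$ guarantees. The remaining estimates (the Bregman bound via (\ref{Land.27}), the tangential-cone splitting, and the balancing Young inequality producing the constants $c_0$ and $c_1$) are routine once the step-size constraint (\ref{gamma}) is used.
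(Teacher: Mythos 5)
Your proposal is correct and follows essentially the same route as the paper's proof: the identity (\ref{Land.20}) combined with the bound (\ref{Land.27}), the tangential-cone splitting of $F'(x_k^\d)(x_k^\d-x^\dag)$ with the noise decomposition, the Young inequality producing $c_0$ and $c_1$, and a joint induction in which the one-step estimate and the containment $x_k^\d\in B_{2\rho}(x_0)$ are established together via the budget $\Delta_0^\d+c_1\d^2\hat k_\d<\sigma\rho^2$ and strong convexity. The only (harmless) deviations are cosmetic: you telescope rather than carry the bound $\Delta_k^\d\le\Delta_0^\d+c_1\d^2 k$ as part of the induction hypothesis, and you use the slightly sharper $\|x^\dag-x_0\|\le\rho/\sqrt{2}$ where the paper uses $\|x^\dag-x_0\|\le\rho$.
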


\begin{proof} 
We first show by induction that 
\begin{align}\label{Land.12}
x_k^\d \in B_{2\rho}(x_0) \quad \mbox{and} \quad 
\Delta_k^\d \le D_\R^{\xi_0}(x^\dag, x_0) + c_1 \d^2 k
\end{align}
for all integers $0\le k \le \hat k_\d$. It is trivial for $k = 0$ as $\xi_0^\d = \xi_0$ and 
$x_0^\d = x_0$. Next we assume that (\ref{Land.12}) holds for all $0\le k \le l$ for some 
$l < \hat k_\d$ and show that (\ref{Land.12}) still holds for $k = l + 1$. By the second equation in 
(\ref{Land}), we have $\xi_l^\d \in \p \R(x_l^\d)$. Thus we may use (\ref{Land.20}) and 
(\ref{Land.27}) to obtain 
\begin{align*}
\Delta_{l+1}^\d - \Delta_l^\d 
& = D_\R^{\xi_{l+1}}(x_l, x_{l+1}) + \left\l \xi_{l+1}^\d - \xi_l^\d, x_l^\d - x^\dag \right\r \\
& \le \frac{1}{4\sigma} \left\|\xi_{l+1}^\d - \xi_l^\d \right\|^2 
+ \left\l \xi_{l+1}^\d - \xi_l^\d, x_l^\d - x^\dag \right\r.
\end{align*}
By virtue of the first equation in (\ref{Land}), we further obtain 
\begin{align*}
\Delta_{l+1}^\d - \Delta_l^\d 
& \le \frac{1}{4\sigma} (\gamma_l^\d)^2\left\|F'(x_l^\d)^*(F(x_l^\d) - y^\d)\right\|^2 
- \gamma_l^\d \left\l F(x_l^\d) - y^\d, F'(x_l^\d) (x_l^\d - x^\dag)\right\r.
\end{align*}
By using $\|y^\d - y\| \le \d$ and (iii) of Assumption \ref{ass1}, we then have 
\begin{align}\label{Land.11}
\Delta_{l+1}^\d - \Delta_l^\d 
& \le \frac{1}{4\sigma} (\gamma_l^\d)^2 \left\|F'(x_l^\d)^*(F(x_l^\d) - y^\d)\right\|^2
- \gamma_l^\d \left\|F(x_l^\d) - y^\d\right\|^2 \displaybreak[0] \nonumber \\
& \quad \, - \gamma_l^\d \left\l F(x_l^\d) - y^\d, y^\d - y + y - F(x_l^\d) - F'(x_l^\d) (x^\dag - x_l^\d) \right\r
\displaybreak[0] \nonumber \\
& \le \frac{1}{4\sigma} (\gamma_l^\d)^2 \left\|F'(x_l^\d)^*(F(x_l^\d) - y^\d)\right\|^2
- \gamma_l^\d \left\|F(x_l^\d) - y^\d\right\|^2 \nonumber \\
& \quad \, + \gamma_l^\d \left\|F(x_l^\d) - y^\d\right\| \left((1+\eta)\d + \eta \|F(x_l^\d) -y^\d\|\right).
\end{align}
In view of (\ref{gamma}), we can obtain 
\begin{align*}
\Delta_{l+1}^\d - \Delta_l^\d 
& \le -\left(1 - \eta - \frac{\gamma}{4\sigma}\right) \gamma_l^\d \left\|F(x_l^\d) - y^\d\right\|^2 
+ (1+\eta) \gamma_l^\d \d \left\|F(x_l^\d) - y^\d\right\| \\
& = - 2 c_0 \gamma_l^\d \left\|F(x_l^\d) - y^\d\right\|^2 
+ (1+\eta) \gamma_l^\d \d \left\|F(x_l^\d) - y^\d\right\|.
\end{align*}
Combining this with the inequality 
$$
(1+\eta) \d \left\|F(x_l) - y^\d\right\|  
\le c_0 \left\|F(x_l^\d) - y^\d\right\|^2 + \frac{(1+\eta)^2}{4c_0} \d^2
$$
shows that
\begin{align}\label{Land.13}
\Delta_{l+1}^\d - \Delta_l^\d 
& \le - c_0 \gamma_l^\d \left\|F(x_l^\d) - y^\d\right\|^2 + \frac{(1+\eta)^2}{4 c_0} \gamma_l^\d \d^2 \nonumber \\
& \le - c_0 \gamma_l^\d \left\|F(x_l^\d) - y^\d\right\|^2 + c_1 \d^2,
\end{align}
where for the last step we used $\gamma_l^\d \le \bar \gamma$. By virtue of this inequality 
and the induction hypothesis, we have 
$$
\Delta_{l+1}^\d \le \Delta_l^\d + c_1 \d^2 
\le D_\R^{\xi_0}(x^\dag, x_0) + c_1 \d^2 (l+1) 
\le \frac{1}{2} \sigma \rho^2 + c_1 \d^2 \hat k_\d 
< \sigma \rho^2
$$
which together with Assumption \ref{ass0} and (\ref{Land.24}) implies that $\sigma \|x_{l+1}^\d - x^\dag\|^2 
\le \sigma \rho^2$ and hence $\|x_{l+1}^\d - x^\dag\| \le \rho$. Since Assumption \ref{ass0} and (ii) in 
Assumption \ref{ass1} imply $\|x^\dag - x_0\| \le \rho$, we thus have $\|x_{l+1}^\d - x_0\| \le 2\rho$, i.e.
$x_{l+1}^\d \in B_{2\rho}(x_0)$. We therefore complete the proof of (\ref{Land.12}). As a direct 
consequence, we can see that (\ref{Land.13}) holds for all $0\le l < \hat k_\d$ which shows the desired 
result. 
\end{proof}

In the following we will focus on deriving convergence rates under the benchmark source condition 
(\ref{sc0}). Let $A:=F'(x^\dag)$. The key idea is the observation that, under Assumption \ref{ass1.5}, 
$\xi_k^\d -\xi_0 \in \mbox{Ran}(A^*)$ for each integer $k$. This leads us to introducing an auxiliary 
sequence which plays a crucial role in our argument. To be more precise, by using Assumption \ref{ass1.5}
we can write the first equation in (\ref{Land}) as
$$
\xi_{k+1}^\d = \xi_k^\d - \gamma_k^\d A^* Q_{x_k^\d}^* \left(F(x_k^\d) - y^\d\right).
$$
Thus, if we define $(\la_k^\d, \xi_k^\d, x_k^\d)$ by setting $\la_0^\d = 0$ and 
\begin{align}\label{Land1}
\begin{split}
\xi_k^\d & = \xi_0 + A^*\la_k^\d, \\
x_k^\d & = \arg\min_{x\in X} \left\{\R(x) - \l \xi_k^\d, x\r\right\}, \\
\la_{k+1}^\d & = \la_k^\d - \gamma_k^\d Q_{x_k^\d}^*\left(F(x_k^\d) - y^\d\right), 
\end{split}
\end{align}
then these $(\xi_k^\d, x_k^\d)$ are the same as the ones produced by (\ref{Land}). Note that the definition
of $\la_{k+1}^\d$ relies on $Q_{x_k^\d}$ which requires the information of $x^\dag$. The reformulation 
(\ref{Land1}) of (\ref{Land}) is not for computational purpose, instead it will be used only for theoretical 
analysis and a proper use of the additional sequence $\{\la_k^\d\}$ will enable us to derive the convergence 
rate under the benchmark source condition (\ref{sc0}). 

Since $Y$ is a Hilbert space and $\{\la_k^\d\}$ is a sequence in $Y$, we may use the polarization identity
and the definition of $\la_{k+1}^\d$ to obtain  
\begin{align*}
& \left\|\la_{k+1}^\d - \la^\dag\right\|^2 - \left\|\la_k^\d - \la^\dag\right\|^2 \\
& = \left\|\la_{k+1}^\d - \la_k^\d\right\|^2 + 2 \left\l \la_{k+1}^\d - \la_k^\d, \la_k^\d - \la^\dag\right\r \\
& = (\gamma_k^\d)^2 \left\|Q_{x_k^\d}^*(F(x_k^\d) - y^\d)\right\|^2 
- 2\gamma_k^\d \left\l Q_{x_k^\d}^*(F(x_k^\d) - y^\d), \la_k^\d - \la^\dag\right\r \displaybreak[0]\\
& = (\gamma_k^\d)^2 \left\|Q_{x_k^\d}^*(F(x_k^\d) - y^\d)\right\|^2 
- 2\gamma_k^\d \left\l (Q_{x_k^\d}^* - I) (F(x_k^\d) - y^\d), \la_k^\d - \la^\dag\right\r \\
& \quad \, - 2\gamma_k^\d \left\l F(x_k^\d) - y - A(x_k^\d - x^\dag) + y - y^\d, \la_k^\d - \la^\dag\right\r \\
& \quad \, - 2 \gamma_k^\d \left\l A(x_k^\d - x^\dag), \la_k^\d - \la^\dag\right\r. 
\end{align*}
By the condition on $Q_x$, we have 
$$
\| I - Q_{x_k^\d}^*\| \le \kappa_0 \|x_k^\d - x^\dag\|.
$$
According to Lemma \ref{Land:lem1} and the strong convexity of $\R$, $\|x_k^\d - x^\dag\|$ is bounded
for $0\le k \le \hat k_\d$ and thus we can find a constant $c_2>0$ independent of $k$ and $\d$ such that 
$$
\|Q_{x_k^\d}^*\|^2 \le (1 + \kappa_0 \|x_k^\d - x^\dag\|)^2 \le c_2, \quad \forall 0\le k \le \hat k_\d. 
$$
Therefore 
\begin{align}\label{Land5}
& \left\|\la_{k+1}^\d - \la^\dag\right\|^2 - \left\|\la_k^\d - \la^\dag\right\|^2 \nonumber \\
& \le c_2 (\gamma_k^\d)^2 \left\|F(x_k^\d) - y^\d\right\|^2 
+ 2\kappa_0 \gamma_k^\d \left\|x_k^\d - x^\dag\right\| \left\|F(x_k^\d) - y^\d\right\| \left\|\la_k^\d - \la^\dag\right\|
\nonumber\\
& \quad \, + 2 \gamma_k^\d \d \left\|\la_k^\d - \la^\dag\right\|
+ 2\gamma_k^\d \left\| F(x_k^\d) - y - A(x_k^\d - x^\dag)\right\| \left\|\la_k^\d - \la^\dag\right\| \nonumber\\
& \quad \, - 2 \gamma_k^\d \left\l A^*\la_k^\d - A^*\la^\dag, x_k^\d - x^\dag\right\r. 
\end{align}
By using the source condition (\ref{sc0}), the relation between $\xi_k^\d$ and $\la_k^\d$, and the strong 
convexity of $\R$, we have 
\begin{align*}
\left\l A^* \la_k^\d - A^* \la^\dag, x_k^\d - x^\dag\right\r 
& = \left\l \xi_k^\d - \xi^\dag, x_k^\d - x^\dag\right\r \\
& = D_\R^{\xi_k^\d}(x^\dag, x_k^\d) + D_\R^{\xi^\dag} (x_k^\d, x^\dag) \\ 
& \ge \Delta_k^\d + \sigma \|x_k^\d - x^\dag\|^2.
\end{align*}
By virtue of Assumption \ref{ass1.5} and (iii) in Assumption \ref{ass1} we have 
\begin{align*}
\left\|F(x_k^\d) - y - A (x_k^\d - x^\dag)\right\|
& = \left\|\int_0^1 \left(F'(x^\dag + t(x_k^\d - x^\dag)) - A\right) (x_k^\d - x^\dag) dt\right\| \displaybreak[0]\\
& = \left\|\int_0^1 \left(Q_{x^\dag + t(x_k^\d - x^\dag)} - I\right) A (x_k^\d - x^\dag) dt\right\| \displaybreak[0]\\
& \le \int_0^1 \left\|Q_{x^\dag + t(x_k^\d - x^\dag)} - I\right\| \left\|A (x_k^\d - x^\dag)\right\| dt \displaybreak[0]\\
& \le \frac{1}{2} \kappa_0 \left\|x_k^\d - x^\dag\right\| \left\|A (x_k^\d - x^\dag)\right\| \displaybreak[0]\\
& \le \frac{1}{2} (1+\eta) \kappa_0 \left\|x_k^\d - x^\dag\right\| \left\|F(x_k^\d) -y\right\| \\
& \le \frac{1}{2} (1+\eta) \kappa_0 \left\|x_k^\d - x^\dag\right\| \left(\left\|F(x_k^\d) -y^\d\right\| + \d\right). 
\end{align*}
Combining these estimates with (\ref{Land5}) gives 
\begin{align*}
&\left\|\la_{k+1}^\d - \la^\dag\right\|^2 - \left\|\la_k^\d - \la^\dag\right\|^2 \nonumber \\
& \le c_2 (\gamma_k^\d)^2 \left\|F(x_k^\d) - y^\d\right\|^2 
+ (3+\eta) \kappa_0 \gamma_k^\d \left\|x_k^\d - x^\dag\right\| \left\|F(x_k^\d)-y^\d\right\| \left\|\la_k^\d - \la^\dag\right\| \nonumber\\
& \quad \, + (1+\eta) \kappa_0 \gamma_k^\d \d \left\|x_k^\d - x^\dag\right\| \left\|\la_k^\d - \la^\dag\right\| 
+ 2 \gamma_k^\d \d \left\|\la_k^\d - \la^\dag\right\| \\
& \quad \, - 2\sigma \gamma_k^\d \left\|x_k^\d - x^\dag\right\|^2 - 2\gamma_k^\d \Delta_k^\d. 
\end{align*}
By the boundedness of $\|x_k^\d - x^\dag\|$, we can find a constant $c_3$ independent of $k$ and $\d$ such that 
\begin{align*}
& \|\la_{k+1}^\d - \la^\dag\|^2 - \|\la_k^\d - \la^\dag\|^2 \nonumber \\
& \le c_2 (\gamma_k^\d)^2 \|F(x_k^\d) - y^\d\|^2 
+ (3+\eta) \kappa_0 \gamma_k^\d \|x_k^\d - x^\dag\| \|F(x_k^\d) - y^\d\| \|\la_k^\d - \la^\dag\| \nonumber\\
& \quad \, + c_3 \gamma_k^\d \d \|\la_k^\d - \la^\dag\| - 2\sigma \gamma_k^\d \|x_k^\d - x^\dag\|^2 
- 2\gamma_k^\d \Delta_k^\d
\end{align*}
for all $0\le k \le \hat k_\d$. By virtue of the Young's inequality we have
\begin{align*}
& (3+\eta) \kappa_0 \|x_k^\d - x^\dag\| \|F(x_k^\d) - y^\d\| \|\la_k^\d - \la^\dag\| \\
& \le 2\sigma \|x_k^\d - x^\dag\|^2 
+ \frac{(3+\eta)^2 \kappa_0^2}{8 \sigma} \|F(x_k^\d) - y^\d\|^2 \|\la_k^\d - \la^\dag\|^2,   
\end{align*}
we consequently obtain 
\begin{align*}
\|\la_{k+1}^\d - \la^\dag\|^2 
& \le \left(1+\frac{(3+\eta)^2 \kappa_0^2}{8\sigma} \gamma_k^\d\|F(x_k^\d)-y^\d\|^2\right) \|\la_k^\d - \la^\dag\|^2 \nonumber \\
& \quad \, + c_2 (\gamma_k^\d)^2 \|F(x_k^\d) - y^\d\|^2 + c_3 \gamma_k^\d \d \|\la_k^\d - \la^\dag\| -  2\gamma_k^\d \Delta_k^\d. 
\end{align*}
With the help of the condition $\gamma_k^\d \le \bar \gamma$, we thus obtain 
the following result.

\begin{lemma}\label{Land:lem2}
Let Assumption \ref{ass0}, Assumption \ref{ass1} and Assumption \ref{ass1.5} hold. Let the step-size 
$\gamma_k$ be chosen to satisfy (\ref{gamma}) and (\ref{gamma0}), and let $\hat k_\d$ be chosen as in 
Lemma \ref{Land:lem1}. If $x^\dag$ satisfies the source condition (\ref{sc0}), then 
\begin{align}\label{Land6}
\|\la_{k+1}^\d - \la^\dag\|^2 
& \le \left(1+\frac{(3+\eta)^2 \kappa_0^2}{8\sigma}\gamma_k^\d\|F(x_k^\d)-y^\d\|^2\right) 
\|\la_k^\d - \la^\dag\|^2 \nonumber \\
& \quad \, + c_2 \bar \gamma \gamma_k^\d \|F(x_k^\d) - y^\d\|^2 + c_3 \bar \gamma \d \|\la_k^\d - \la^\dag\| 
- 2\gamma_k^\d \Delta_k^\d 
\end{align}
for $0\le k <\hat k_\d$, where $c_2$ and $c_3$ are positive constants independent of $k$ and $\d$. 
\end{lemma}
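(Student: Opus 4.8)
The plan is to track the auxiliary Hilbert-space sequence $\{\la_k^\d\}$ introduced in the reformulation (\ref{Land1}) instead of working with the dual variables $\{\xi_k^\d\}$ directly. Since $Y$ is a Hilbert space, I would first apply the polarization identity to $\|\la_{k+1}^\d - \la^\dag\|^2 - \|\la_k^\d - \la^\dag\|^2$ and substitute the update $\la_{k+1}^\d = \la_k^\d - \gamma_k^\d Q_{x_k^\d}^*(F(x_k^\d) - y^\d)$. This produces a squared increment of size $(\gamma_k^\d)^2\|Q_{x_k^\d}^*(F(x_k^\d) - y^\d)\|^2$ together with the cross term $-2\gamma_k^\d \l Q_{x_k^\d}^*(F(x_k^\d) - y^\d), \la_k^\d - \la^\dag\r$, which is the quantity demanding careful treatment.

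The second step is to decompose this cross term by inserting and subtracting the linearization at $x^\dag$. Writing $A := F'(x^\dag)$, I would split $Q_{x_k^\d}^*(F(x_k^\d) - y^\d)$ into the range-invariance defect $(Q_{x_k^\d}^* - I)(F(x_k^\d) - y^\d)$, the combined data and Taylor defect $F(x_k^\d) - y - A(x_k^\d - x^\dag) + (y - y^\d)$, and the leading term $A(x_k^\d - x^\dag)$. The first two groups are bounded by Cauchy--Schwarz: the operator estimate $\|I - Q_{x_k^\d}^*\| \le \kappa_0\|x_k^\d - x^\dag\|$ from Assumption \ref{ass1.5} controls the defect, the noise yields a term of order $\d\|\la_k^\d - \la^\dag\|$, and the Taylor remainder is estimated through its integral representation and Assumption \ref{ass1.5} by $\frac{1}{2}(1+\eta)\kappa_0\|x_k^\d - x^\dag\|(\|F(x_k^\d) - y^\d\| + \d)$. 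For the squared increment I would invoke Lemma \ref{Land:lem1} together with the strong convexity bound (\ref{Land.24}) to show that $\|x_k^\d - x^\dag\|$ stays bounded for $0 \le k \le \hat k_\d$, whence $\|Q_{x_k^\d}^*\|^2 \le c_2$ for some constant $c_2$ independent of $k$ and $\d$.

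The crucial step is to exploit the source condition (\ref{sc0}) on the leading term. Using $\xi^\dag = \xi_0 + A^*\la^\dag$ and $\xi_k^\d = \xi_0 + A^*\la_k^\d$, I would rewrite $\l A^*\la_k^\d - A^*\la^\dag, x_k^\d - x^\dag\r$ as $\l \xi_k^\d - \xi^\dag, x_k^\d - x^\dag\r$ and then, via the two-point Bregman identity, as $D_\R^{\xi_k^\d}(x^\dag, x_k^\d) + D_\R^{\xi^\dag}(x_k^\d, x^\dag)$; strong convexity gives the lower bound $\Delta_k^\d + \sigma\|x_k^\d - x^\dag\|^2$. Multiplying by $-2\gamma_k^\d$ then supplies both the favourable $-2\gamma_k^\d\Delta_k^\d$ term and, equally important, a negative multiple of $\|x_k^\d - x^\dag\|^2$.

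I expect the main obstacle to lie in the final bookkeeping. The mixed term $(3+\eta)\kappa_0\|x_k^\d - x^\dag\|\|F(x_k^\d) - y^\d\|\|\la_k^\d - \la^\dag\|$ must be split by Young's inequality so that its $\|x_k^\d - x^\dag\|^2$ part is exactly cancelled by the $-2\sigma\|x_k^\d - x^\dag\|^2$ contribution obtained above, leaving the coefficient $\frac{(3+\eta)^2\kappa_0^2}{8\sigma}\|F(x_k^\d) - y^\d\|^2$ in front of $\|\la_k^\d - \la^\dag\|^2$. Gathering the remaining $\d$-terms into a constant $c_3$ and using $\gamma_k^\d \le \bar\gamma$ to bound the term quadratic in $\gamma_k^\d$ then yields the stated recursion (\ref{Land6}). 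The delicate point is ensuring that every uncompensated factor of $\|x_k^\d - x^\dag\|$ is absorbed into the cancelling strong-convexity term, since any leftover would spoil the clean recursive structure on which the later convergence-rate analysis depends.
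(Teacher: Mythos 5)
Your proposal is correct and follows essentially the same route as the paper's own derivation: the polarization identity, the three-way decomposition of the cross term into range-invariance defect, noise-plus-Taylor defect, and leading term $A(x_k^\d - x^\dag)$, the Bregman-distance lower bound $\Delta_k^\d + \sigma\|x_k^\d - x^\dag\|^2$ from the source condition (\ref{sc0}), and the Young's inequality split that cancels the $\|x_k^\d - x^\dag\|^2$ contribution exactly, producing the coefficient $\frac{(3+\eta)^2\kappa_0^2}{8\sigma}\|F(x_k^\d)-y^\d\|^2$. The final bookkeeping with $c_2$, $c_3$ and $\gamma_k^\d \le \bar\gamma$ also matches the paper, so there is nothing to add.
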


Based on Lemma \ref{Land:lem2}, we will show under the source condition (\ref{sc0}) that $\la_k^\d$ is 
bounded for all $0\le k \le \hat k_\d$ if $\hat k_\d$ is chosen as $\hat k_\d := [c\d^{-1}]$ for some 
positive constant $c>0$, where, for any given number $t$, $[t]$ denotes the largest integer $\le t$. To this 
end, we need the following elementary result (\cite{JLZ2023}). 

\begin{lemma}\label{Land:lem3}
Let $\{a_k\}$ and $\{b_k\}$ be two sequences of nonnegative numbers such that
$$
a_k^2 \le b_k^2 + c \sum_{j=0}^{k-1} a_j, \quad k=0, 1,\cdots,
$$
where $c \ge 0$ is a constant. If $\{b_k\}$ is non-decreasing, then
$$
a_k \le b_k + c k, \quad k=0, 1, \cdots.
$$
\end{lemma}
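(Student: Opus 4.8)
The plan is to prove the claim by induction on $k$. The hypothesis is the family of inequalities
$$
a_k^2 \le b_k^2 + c \sum_{j=0}^{k-1} a_j, \quad k = 0, 1, \cdots,
$$
and I want to conclude $a_k \le b_k + ck$ for every $k$. The base case $k=0$ is immediate: the sum is empty, so $a_0^2 \le b_0^2$, whence $a_0 \le b_0 = b_0 + c\cdot 0$ since both sides are nonnegative. For the inductive step I would assume $a_j \le b_j + cj$ for all $0 \le j \le k-1$ and aim to establish the bound at index $k$.

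First I would substitute the inductive hypothesis into the sum appearing in the defining inequality. Using $a_j \le b_j + cj$ and the monotonicity of $\{b_k\}$ (so that $b_j \le b_k$ for $j \le k-1$), I get
$$
\sum_{j=0}^{k-1} a_j \le \sum_{j=0}^{k-1} (b_j + cj) \le k b_k + c\sum_{j=0}^{k-1} j = k b_k + \frac{c k(k-1)}{2}.
$$
Feeding this into the hypothesis yields
$$
a_k^2 \le b_k^2 + ck b_k + \frac{c^2 k(k-1)}{2}.
$$
The goal is now to show that the right-hand side is dominated by $(b_k + ck)^2 = b_k^2 + 2ck b_k + c^2 k^2$, after which taking square roots (both quantities being nonnegative) finishes the step. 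Comparing the two expressions term by term, it suffices to check that
$$
ck b_k + \frac{c^2 k(k-1)}{2} \le 2ck b_k + c^2 k^2,
$$
i.e. that $0 \le ck b_k + c^2 k^2 + \frac{c^2 k}{2}$, which holds trivially since $c, b_k, k \ge 0$.

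I do not expect any genuine obstacle here; the result is elementary and the induction closes cleanly. The only point requiring a little care is the use of the monotonicity assumption on $\{b_k\}$: it is exactly what lets me replace each $b_j$ in the sum by the single value $b_k$, so that the mixed term collapses to the clean coefficient $2ck b_k$ needed to complete the square. Without monotonicity one would have to carry the $b_j$'s separately and the bound $b_k + ck$ would no longer follow. I would therefore flag in the write-up precisely where the non-decreasing hypothesis is invoked.
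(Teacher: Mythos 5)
Your proof is correct. One thing to note up front: the paper does not actually prove this lemma itself --- it is quoted as an elementary result imported from \cite{JLZ2023} --- so there is no in-text argument to compare against, and a self-contained proof like yours is exactly what is called for. Your strong induction works: the base case is immediate, the substitution $\sum_{j=0}^{k-1} a_j \le \sum_{j=0}^{k-1}(b_j+cj) \le kb_k + \tfrac{ck(k-1)}{2}$ uses the monotonicity of $\{b_k\}$ precisely where you flag it (and uses $c\ge 0$ to preserve the inequality direction when substituting into the sum), and the resulting bound $a_k^2 \le b_k^2 + ckb_k + \tfrac{c^2k(k-1)}{2} \le (b_k+ck)^2$ closes the step since both sides of the final comparison are nonnegative. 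There is one harmless arithmetic slip at the very end: the difference $2ckb_k + c^2k^2 - ckb_k - \tfrac{c^2k(k-1)}{2}$ equals $ckb_k + \tfrac{c^2k^2}{2} + \tfrac{c^2k}{2}$, not $ckb_k + c^2k^2 + \tfrac{c^2k}{2}$; since either expression is manifestly nonnegative, nothing breaks. For comparison, the standard way lemmas of this type are proved in the stochastic iteration literature avoids induction via a maximal-element trick: fix $k$ and choose $m \le k$ with $a_m = \max_{0\le j\le k} a_j$; then $a_m^2 \le b_m^2 + c\sum_{j=0}^{m-1} a_j \le b_k^2 + ck\, a_m$, and solving this quadratic inequality in $a_m$ gives
$$
a_m \le \tfrac{1}{2}\left(ck + \sqrt{c^2k^2 + 4b_k^2}\right) \le ck + b_k,
$$
whence $a_k \le a_m \le b_k + ck$. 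Both arguments are elementary and invoke monotonicity for the same purpose (to replace $b_m$ or $b_j$ by $b_k$); the maximal-element route is slightly shorter, while your induction delivers the bound index by index without appealing to a maximizer.
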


\begin{lemma}\label{Land:lem4}
Let Assumption \ref{ass0}, Assumption \ref{ass1} and Assumption \ref{ass1.5} hold. Let the step-size 
$\gamma_k$ be chosen to satisfy (\ref{gamma}) and (\ref{gamma0}). If $x^\dag$ satisfies the source condition
(\ref{sc0}) and if $\hat k_\d$ is chosen as $\hat k_\d = [c \d^{-1}]$ for some positive constant $c$, then 
there exist $C>0$ and $\bar \d>0$ such that 
$$
\|\la_k^\d - \la^\dag\| \le C, \quad \forall 0\le k \le \hat k_\d \emph{ and } 0<\d \le \bar \d. 
$$
\end{lemma}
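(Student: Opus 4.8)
The plan is to bootstrap boundedness of $\|\la_k^\d - \la^\dag\|$ from the recursive inequality (\ref{Land6}) together with the summability of $\gamma_k^\d\|F(x_k^\d)-y^\d\|^2$, which follows from Lemma \ref{Land:lem1}. First I would sum the inequality (\ref{Land.28}) from $0$ to $\hat k_\d - 1$ and use $\Delta_{k+1}^\d \ge 0$ to get a uniform bound
$$
c_0 \sum_{k=0}^{\hat k_\d-1} \gamma_k^\d \|F(x_k^\d)-y^\d\|^2 \le \Delta_0^\d + c_1 \d^2 \hat k_\d \le D_\R^{\xi_0}(x^\dag,x_0) + c_1 \d^2 \hat k_\d.
$$
With the choice $\hat k_\d = [c\d^{-1}]$ we have $c_1\d^2\hat k_\d \le c_1 c\,\d \to 0$, so there is a constant $M$ independent of $\d$ with $\sum_{k=0}^{\hat k_\d-1}\gamma_k^\d\|F(x_k^\d)-y^\d\|^2 \le M$ for all small $\d$. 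This is the crucial ingredient that tames the dangerous multiplicative factor in (\ref{Land6}).

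Next I would iterate (\ref{Land6}). Writing $s_k := \gamma_k^\d\|F(x_k^\d)-y^\d\|^2$ and $\theta := (3+\eta)^2\kappa_0^2/(8\sigma)$, the recursion reads, after dropping the nonpositive term $-2\gamma_k^\d\Delta_k^\d$,
$$
\|\la_{k+1}^\d - \la^\dag\|^2 \le (1+\theta s_k)\|\la_k^\d - \la^\dag\|^2 + c_2\bar\gamma\, s_k + c_3\bar\gamma\,\d\,\|\la_k^\d-\la^\dag\|.
$$
The product $\prod_{j}(1+\theta s_j) \le \exp(\theta\sum_j s_j) \le e^{\theta M}$ is uniformly bounded because $\sum s_j \le M$. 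Unrolling the recursion and absorbing the products into the constant $e^{\theta M}$, one obtains a bound of the form
$$
\|\la_k^\d-\la^\dag\|^2 \le e^{\theta M}\left(\|\la_0^\d-\la^\dag\|^2 + c_2\bar\gamma M\right) + c_3\bar\gamma\, e^{\theta M}\,\d\sum_{j=0}^{k-1}\|\la_j^\d-\la^\dag\|.
$$
Since $\la_0^\d = 0$, the first group is a constant $b^2$ independent of $k$ and $\d$. The remaining term is where the auxiliary lemma enters: with $a_k := \|\la_k^\d-\la^\dag\|$, $b_k := b$ (constant, hence non-decreasing), and the constant $c_3\bar\gamma e^{\theta M}\d$ playing the role of $c$ in Lemma \ref{Land:lem3}, I would apply that lemma to conclude
$$
\|\la_k^\d-\la^\dag\| \le b + c_3\bar\gamma e^{\theta M}\,\d\, k \le b + c_3\bar\gamma e^{\theta M}\,\d\,\hat k_\d.
$$

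Finally, because $\hat k_\d = [c\d^{-1}]$ gives $\d\,\hat k_\d \le c$, the last term is bounded by the constant $c_3\bar\gamma e^{\theta M} c$, so the full right-hand side is a constant $C$ independent of both $k$ and $\d \le \bar\d$, which is exactly the claim. The main obstacle I anticipate is handling the multiplicative factor $1+\theta s_k$ rigorously when unrolling the recursion: the $\d$-linear perturbation term $c_3\bar\gamma\d\,\|\la_k^\d-\la^\dag\|$ gets multiplied by the accumulated products $\prod_{i>j}(1+\theta s_i)$, and one must check that each such product is still bounded by $e^{\theta M}$ so that the telescoped sum stays in the form $c\sum_{j<k} a_j$ required by Lemma \ref{Land:lem3}. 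This is the step where the uniform summability bound $\sum s_k \le M$ does the heavy lifting, and the choice $\hat k_\d = [c\d^{-1}]$ is precisely calibrated so that $\d\hat k_\d$ stays bounded.
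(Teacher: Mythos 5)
Your proposal is correct and follows essentially the same route as the paper's proof: summing the descent inequality (\ref{Land.28}) to get uniform summability of $\gamma_k^\d\|F(x_k^\d)-y^\d\|^2$, unrolling (\ref{Land6}) with the product $\prod(1+\theta s_k)\le e^{\theta M}$ controlled via $\log(1+t)\le t$, and invoking Lemma \ref{Land:lem3} together with $\d\,\hat k_\d\le c$ to absorb the $\d$-linear drift. The only cosmetic difference is that you bound $\sum b_i$ by a constant up front so that Lemma \ref{Land:lem3} is applied with a constant $b_k$, whereas the paper keeps $b_k$ as a non-decreasing partial-sum expression; both are valid.
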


\begin{proof}
Note that $c_1 \d^2 \hat k_\d \le c_1 c \d$. Thus there is $\bar \d>0$ such that $c_1 \d^2 \hat k_\d 
\le \frac{1}{2} \sigma \rho^2$ for all $0< \d \le \bar \d$. Consequently, the result in Lemma \ref{Land:lem2} 
holds for $0\le k \le \hat k_\d$ and $0< \d \le \bar \d$. We set 
\begin{align*}
a_k &:= \|\la_k^\d - \la^\dag\|, \\
b_k &:= c_2 \bar \gamma \gamma_k^\d \|F(x_k^\d)-y^\d\|^2, \\
\beta_k &:= \frac{(3+\eta)^2 \kappa_0^2 }{8\sigma} \gamma_k^\d \|F(x_k^\d)-y^\d\|^2.
\end{align*}
Then from (\ref{Land6}) in Lemma \ref{Land:lem2} it follows that  
\begin{align*}
a_{k+1}^2 \le (1+ \beta_k) a_k^2 + b_k + c_3 \bar \gamma \d a_k, \quad \forall 0\le k < \hat k_\d. 
\end{align*}
By recursively using this inequality we have 
\begin{align*}
a_k^2 \le a_0^2 \prod_{l=0}^{k-1} (1+ \beta_l) 
+ \sum_{i=0}^{k-1} \left(b_i + c_3 \bar \gamma \d a_i\right) \prod_{l=i+1}^{k-1} (1 + \beta_l).
\end{align*}
Note that 
\begin{align*}
\prod_{l=i+1}^{k-1} (1+\beta_l) 
&= \exp\left(\log \prod_{l=i+1}^{k-1} (1+\beta_l)\right) \\
& = \exp\left(\sum_{l=i+1}^{k-1} \log(1+\beta_l)\right) \\
& \le \exp\left(\sum_{l=i+1}^{k-1} \beta_l\right), 
\end{align*}
where for the last step we used the inequality $\log(1+t) \le t$ for $t \ge 0$. Therefore 
\begin{align*}
a_k^2 \le a_0^2 \exp\left(\sum_{l=0}^{k-1} \beta_l\right) 
+ \sum_{i=0}^{k-1} \left(b_i + c_3 \bar \gamma \d a_i\right) \exp\left(\sum_{l=i+1}^{k-1} \beta_l\right).
\end{align*}
According to Lemma \ref{Land:lem1}, we have 
\begin{align}\label{Land10}
c_0 \sum_{l=0}^{k-1} \gamma_l^\d \|F(x_l^\d) - y^\d\|^2 
\le \Delta_0^\d + c_1 \d^2 k \le D_\R^{\xi_0}(x^\dag, x_0) + c_1 \d^2 \hat k_\d.
\end{align}
Thus, there is a constant $\tilde C$ such that 
$$
\exp\left(\sum_{l=0}^{k-1} \beta_l\right) 
= \exp\left(\frac{(3+\eta)^2 \kappa_0^2}{8\sigma} \sum_{l=0}^{k-1} \gamma_l^\d \|F(x_l) - y^\d\|^2\right) 
\le \tilde C 
$$
for all $0\le k \le \hat k_\d$. Consequently  
\begin{align*}
a_k^2 \le \tilde C \left(a_0^2 + \sum_{i=0}^{k-1} b_i\right) 
+ \tilde C c_3 \bar \gamma \d \sum_{i=0}^{k-1} a_i, \quad \forall 0\le k \le \hat k_\d. 
\end{align*}
By virtue of Lemma \ref{Land:lem3} then we can conclude that  
\begin{align*}
\|\la_k^\d - \la^\dag\| 
& = a_k \le \sqrt{\tilde C \left(a_0^2 + \sum_{i=0}^{k-1} b_i\right)} + \tilde C c_3 \bar \gamma \d k  \\
& = \sqrt{\tilde C \left(a_0^2 + c_2 \bar \gamma \sum_{i=0}^{k-1} \gamma_i^\d \|F(x_i^\d)-y^\d\|^2\right)} 
+ \tilde C c_3 \bar \gamma \d k.
\end{align*}
With the help of (\ref{Land10}), we can find a positive constant $C'$ such that  
\begin{align*}
\|\la_k^\d - \la^\dag\| \le C' (1 + k \d), \quad \forall 0\le k \le \hat k_\d \mbox{ and } 0< \d \le \bar \d. 
\end{align*}
Since $\hat k_\d = [c\d^{-1}]$, we therefore complete the proof. 
\end{proof}

\subsection{\bf Convergence rate under {\it a priori} stopping rule}

The following result gives the convergence rate of the method (\ref{Land}) under an {\it a prior} stopping 
rule when the source condition (\ref{sc0}) holds and the step-size $\gamma_k^\d$ is chosen properly. 

\begin{theorem}\label{Land:thm1}
Let Assumption \ref{ass0}, Assumption \ref{ass1} and Assumption \ref{ass1.5} hold. Let the step-size 
$\gamma_k^\d$ be chosen to satisfy (\ref{gamma}) and (\ref{gamma0}). If $x^\dag$ satisfies the source 
condition (\ref{sc0}) and if $\hat k_\d$ is chosen as $\hat k_\d = [c \d^{-1}]$ for some positive constant 
$c$, then there exist $C>0$ and $\bar \d>0$ such that 
$$
D_\R^{\xi_{\hat k_\d}^\d}(x^\dag, x_{\hat k_\d}^\d) \le C \d, \quad \forall 0<\d \le \bar \d. 
$$
Consequently $\|x_{\hat k_\d}^\d - x^\dag\| = O(\d^{1/2})$ by the strong convexity of $\R$. 
\end{theorem}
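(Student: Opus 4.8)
The plan is to convert the one-step recursion of Lemma~\ref{Land:lem2} into a \emph{summed} bound on the Bregman distances $\{\Delta_k^\d\}$, and then to upgrade this averaged control into a \emph{pointwise} bound at the terminal index by exploiting the near-monotonicity of $\{\Delta_k^\d\}$ furnished by Lemma~\ref{Land:lem1}. Write $N := \hat k_\d$ and sum the inequality (\ref{Land6}) over $k = 0, \dots, N-1$. The left-hand side telescopes, and since $\la_0^\d = 0$, after moving the $-2\gamma_k^\d \Delta_k^\d$ terms to the left and discarding the nonpositive $-\|\la_N^\d-\la^\dag\|^2$ we obtain
\begin{align*}
2\sum_{k=0}^{N-1} \gamma_k^\d \Delta_k^\d
&\le \|\la^\dag\|^2
+ \frac{(3+\eta)^2 \kappa_0^2}{8\sigma} \sum_{k=0}^{N-1} \gamma_k^\d \|F(x_k^\d)-y^\d\|^2 \|\la_k^\d-\la^\dag\|^2 \\
&\quad + c_2 \bar\gamma \sum_{k=0}^{N-1}\gamma_k^\d \|F(x_k^\d)-y^\d\|^2
+ c_3\bar\gamma\d \sum_{k=0}^{N-1}\|\la_k^\d-\la^\dag\|.
\end{align*}

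Next I would show every term on the right is bounded by a constant independent of $\d$. By Lemma~\ref{Land:lem4} we have $\|\la_k^\d - \la^\dag\| \le C$ for all $0 \le k \le N$ and $0 < \d \le \bar\d$, while (\ref{Land10}) gives $\sum_{k=0}^{N-1} \gamma_k^\d \|F(x_k^\d)-y^\d\|^2 \le c_0^{-1}(D_\R^{\xi_0}(x^\dag,x_0) + c_1\d^2 N)$, which is uniformly bounded because $c_1\d^2 N \le c_1 c\d$. Hence the first two sums are controlled by constants. For the last sum, the boundedness of $\|\la_k^\d-\la^\dag\|$ together with $N = [c\d^{-1}]$ gives $c_3\bar\gamma\d\sum_{k=0}^{N-1}\|\la_k^\d-\la^\dag\| \le c_3\bar\gamma C \d N \le c_3\bar\gamma C c$. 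Combining these with the lower bound $\gamma_k^\d \ge \underline\gamma$ from (\ref{gamma}) yields $\sum_{k=0}^{N-1} \Delta_k^\d \le M$ for some $M$ independent of $\d$.

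Finally I would extract the terminal bound from this summability. Iterating $\Delta_{k+1}^\d \le \Delta_k^\d + c_1\d^2$ from Lemma~\ref{Land:lem1} gives $\Delta_N^\d \le \Delta_j^\d + c_1\d^2 N$ for every $0 \le j \le N-1$. Summing over $j$ and using $\sum_{j=0}^{N-1}\Delta_j^\d \le M$ produces $N\Delta_N^\d \le M + c_1\d^2 N^2$, that is,
\begin{align*}
\Delta_N^\d \le \frac{M}{N} + c_1\d^2 N.
\end{align*}
Since $N = [c\d^{-1}]$, we have $\d^2 N \le c\d$ and $N \ge \tfrac{c}{2}\d^{-1}$ for all sufficiently small $\d$, so both terms are $O(\d)$ and $\Delta_{\hat k_\d}^\d \le C\d$ follows. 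The rate $\|x_{\hat k_\d}^\d - x^\dag\| = O(\d^{1/2})$ is then immediate from the strong convexity estimate (\ref{Land.24}).

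I expect the main obstacle to be precisely the passage from the averaged bound $\sum_k \Delta_k^\d \le M$ to the pointwise terminal bound: summability alone only controls $\Delta_k^\d$ on average, and it is the near-monotonicity $\Delta_{k+1}^\d \le \Delta_k^\d + c_1\d^2$ that rescues the averaging argument, since it forbids $\Delta_N^\d$ from exceeding any earlier $\Delta_j^\d$ by more than the accumulated error $c_1\d^2 N = O(\d)$. The delicate point is the calibration of $N = [c\d^{-1}]$, which must simultaneously keep the accumulated noise $c_1\d^2 N$ of order $\d$ and render the averaging factor $M/N$ of order $\d$; this double balance is exactly what the \emph{a priori} choice $\hat k_\d = [c\d^{-1}]$ delivers.
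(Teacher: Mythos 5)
Your proof is correct and follows essentially the same route as the paper's: both derive the summed bound $\sum_{k<\hat k_\d}\Delta_k^\d = O(1)$ from the recursion of Lemma~\ref{Land:lem2} together with the uniform bound $\|\la_k^\d-\la^\dag\|\le C$ of Lemma~\ref{Land:lem4}, and then upgrade it to a pointwise terminal bound via the near-monotonicity $\Delta_{k+1}^\d \le \Delta_k^\d + c_1\d^2$ of Lemma~\ref{Land:lem1}. The only cosmetic difference is bookkeeping: the paper disposes of the residual terms $\gamma_k^\d\|F(x_k^\d)-y^\d\|^2$ by adding a multiple of (\ref{Land.28}) so as to telescope the Lyapunov quantity $\Delta_k^\d + \tfrac{c_0}{C}\|\la_k^\d-\la^\dag\|^2$, whereas you bound their sum directly via (\ref{Land10}) --- both devices rest on the same Lemma~\ref{Land:lem1}, and your final extraction (summing $\Delta_N^\d \le \Delta_j^\d + c_1\d^2 N$ over $j$) is identical to the paper's lower-bounding of $\sum_k \Delta_k^\d$ by $\hat k_\d \Delta_{\hat k_\d}^\d - \tfrac12 c_1 \hat k_\d(\hat k_\d+1)\d^2$.
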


\begin{proof}
By virtue of Lemma \ref{Land:lem2} and Lemma \ref{Land:lem4}, there is a positive constant $C$ such that  
\begin{align}\label{Land8}
\|\la_{k+1}^\d - \la^\dag\|^2 
& \le \|\la_k^\d - \la^\dag\|^2 + C \gamma_k^\d \|F(x_k^\d) - y^\d\|^2 + C \d - 2 \gamma_k^\d \Delta_k^\d. 
\end{align}
for all integers $0 \le k < \hat k_\d$ and $0<\d \le \bar \d$. Multiplying this equation by $c_0/C$, adding 
to the equation (\ref{Land.28}) in Lemma \ref{Land:lem1}, and using $\gamma_k^\d \ge \underline{\gamma}$, 
we have
\begin{align*}
\Delta_{k+1}^\d + \frac{c_0}{C} \|\la_{k+1}^\d - \la^\dag\|^2 
\le \Delta_k^\d + \frac{c_0}{C} \|\la_k^\d - \la^\dag\|^2 + c_0 \d + c_1 \d^2 - c_4 \Delta_k^\d
\end{align*}
for all $0\le k < \hat k_\d$, where $c_4 := 2c_0 \underline{\gamma}/C$. Recursively using this 
inequality gives 
\begin{align}\label{Land9}
\Delta_{\hat k_\d}^\d + \frac{c_0}{C} \left\|\la_{\hat k_\d}^\d - \la^\dag\right\|^2 
+ c_4 \sum_{k=0}^{\hat k_\d-1} \Delta_k^\d
\le \Delta_0 + \frac{c_0}{C} \|\la_0 - \la^\dag\|^2 + (c_0 + c_1 \d) \d \hat k_\d,
\end{align}
where $\Delta_0:= \Delta_0^\d = D_\R^{\xi_0}(x^\dag, x_0)$. According to Lemma \ref{Land:lem1} we have 
$$
\Delta_k^\d \ge \Delta_{\hat k_\d}^\d - c_1 (\hat k_\d - k) \d^2, \quad 0\le k \le \hat k_\d
$$
and thus 
\begin{align*}
\sum_{k=0}^{\hat k_\d-1} \Delta_k^\d \ge \hat k_\d \Delta_{\hat k_\d}^\d 
- c_1 \d^2 \sum_{k=0}^{\hat k_\d-1} (\hat k_\d - k)
= \hat k_\d \Delta_{\hat k_\d}^\d - \frac{1}{2} c_1 \hat k_\d (\hat k_\d + 1) \d^2.
\end{align*}
Combining this with (\ref{Land9}) gives 
\begin{align*}
\min\{1, c_4\} (\hat k_\d + 1) \Delta_{\hat k_\d}^\d
\le \Delta_0 + \frac{c_0}{C} \|\la^\dag\|^2 + (c_0 + c_1 \d) \d \hat k_\d
+ \frac{1}{2} c_1 c_4 \hat k_\d (\hat k_\d+1) \d^2. 
\end{align*}
Therefore, there is a constant $C'$ independent of $\d$ such that 
\begin{align*}
\Delta_{\hat k_\d}^\d \le C' \left(\frac{1}{\hat k_\d + 1} + \hat k_\d \d^2 + \d\right), \quad 
\forall 0< \d \le \bar \d. 
\end{align*}
According to the choice of $\hat k_\d$, we thus obtain $\Delta_{\hat k_\d}^\d \le C \d$.  Finally, by the 
strong convexity of $\R$, we have $\|x_{\hat k_\d}^\d - x^\dag\|^2 \le \Delta_{\hat k_\d}^\d/\sigma = O(\d)$. 
The proof is complete. 
\end{proof}

\begin{remark}\label{Land:rk1}
{\rm 
Theorem \ref{Land:thm1} requires the step-size $\gamma_k^\d$ to satisfy (\ref{gamma}) and (\ref{gamma0}). 
Actually there are various choices of $\gamma_k^\d$ satisfying this requirement:

\begin{enumerate}[leftmargin = 0.9cm]
\item[(i)] If $\gamma_k^\d$ is taken to be a constant step-size $\gamma_k^\d = \gamma/L^2$ with 
$0< \gamma < 4\sigma (1-\eta)$, where $L$ is the finite constant appeared in (iii) of Assumption \ref{ass1}, 
then (\ref{gamma}) and (\ref{gamma0}) are satisfied with $\underline{\gamma} = \bar\gamma = \gamma/L^2$. 

\item[(ii)] If $\gamma_k^\d$ is chosen as 
$$
\gamma_k^\d = \min\left\{\frac{\gamma \|F(x_k^\d)-y^\d\|^2}{\|F'(x_k^\d)^*(F(x_k^\d)-y^\d)\|^2}, \bar \gamma\right\}
$$
for some constants $\bar \gamma > 0$ and $0 < \gamma < 4 \sigma (1-\eta)$, then (\ref{gamma}) and 
(\ref{gamma0}) are satisfied with $\underline{\gamma} = \min\{\gamma/L^2, \bar \gamma\}$. This choice 
of $\gamma_k^\d$ is related to the minimal error method.

\item[(iii)] Let $\tau > (1+\eta)/(1-\eta)$ be a given number and set $r_k^\d:= F(x_k^\d)-y^\d$. If 
$\gamma_k^\d$ is chosen as 
\begin{align*}
\gamma_k^\d = \left\{\begin{array}{lll}
\min\left\{\frac{\gamma_0 ((1-\eta)\|r_k^\d\|-(1+\eta) \d) \|r_k^\d\|}
{\|F'(x_k^\d)^* r_k^\d\|^2}, \bar \gamma\right\} 
& \mbox{ if } \|r_k^\d\| > \tau \d, \\[1.2ex]
\min\left\{\frac{\gamma_0 (1-\eta)}{L^2}, \bar \gamma\right\}, & \mbox{ otherwise}
\end{array}\right.
\end{align*}
for some constant $0<\gamma_0< 4\sigma$, then by noting that 
\begin{align*}
\frac{\gamma_0 ((1-\eta)\|r_k^\d\|-(1+\eta) \d) \|r_k^\d\|}
{\|F'(x_k^\d)^*r_k^\d\|^2} 
& \ge \gamma_0 \left(1-\eta - \frac{1+\eta}{\tau}\right) \frac{\|r_k^\d\|^2}
{\|F'(x_k^\d)^* r_k^\d\|^2} \\
& \ge \frac{\gamma_0}{L^2} \left(1-\eta - \frac{1+\eta}{\tau}\right)
\end{align*}
when $\|r_k^\d\| > \tau \d$, we can see that (\ref{gamma}) and (\ref{gamma0}) are satisfied with 
$$
\gamma = \gamma_0 (1-\eta) \quad \mbox{ and } \quad 
\underline{\gamma} = \min\left\{\frac{\gamma_0}{L^2} \left(1-\eta - \frac{1+\eta}{\tau}\right), \bar \gamma\right\}.
$$
\end{enumerate}
Therefore, the result in Theorem \ref{Land:thm1} holds for all the above choices of step-sizes, i.e. 
if the source condition (\ref{sc0}) holds and $\hat k_\d = [c\d^{-1}]$, then $\Delta_{\hat k_\d}^\d =O(\d)$
and $\|x_{k_\d}^\d - x^\dag\| = O(\d^{1/2})$. 
}
\end{remark}

\begin{remark}
{\rm 
When $F$ is a bounded linear operator, $\R(x) = \frac{1}{2}\|x\|^2$ and $\gamma_k^\d$ is chosen as 
$$
\gamma_k^\d = \frac{\gamma\|F x_k^\d - y^\d\|^2}{\|F^*(F x_k^\d - y^\d)\|^2},
$$
the corresponding method of (\ref{Land}) becomes the minimal error method for solving linear 
ill-posed problems. It has been shown in \cite{ELP1990} that minimal error method is not a regularization 
method when terminated by an {\it a priori} stopping rule. Our result shows that if this step-size 
is modified into the one as listed in (ii) of Remark \ref{Land:rk1}, convergence rate can be derived under 
an {\it a priori} stopping rule if the source condition (\ref{sc0}) holds. Our result does not contradict
the one in \cite{ELP1990} because we have imposed an upper bound on $\gamma_k^\d$. Placing such an upper 
bound can enhance stability in numerical computation. 
}
\end{remark}

\subsection{\bf Convergence rate under the discrepancy principle}

Next we turn to derive the convergence rate of the Landweber-type method (\ref{Land}) under the source 
condition (\ref{sc0}) when the iteration is terminated by the discrepancy principle (\ref{DP0})
for some number $\tau >1$. We need the following result. 

\begin{lemma}\label{Land:lem5}
Let Assumption \ref{ass0} and Assumption \ref{ass1} hold. Let the step-size $\gamma_k^\d$ be chosen to 
satisfy (\ref{gamma}). If $\tau>1$ and $\gamma>0$ are chosen such that 
\begin{align}\label{gamma2}
\tau > \frac{1+\eta}{1-\eta} \quad \mbox{and} \quad \gamma < 4 \sigma\left(1-\eta - \frac{1+\eta}{\tau}\right),
\end{align}
then the discrepancy principle (\ref{DP0}) outputs a finite integer $k_\d$, $x_k^\d \in B_{2\rho}(x_0)$
for all $0\le k \le k_\d$, and 
\begin{align*}
\Delta_{k+1}^\d - \Delta_k^\d \le - c_5 \gamma_k^\d \|F(x_k^\d) - y^\d\|^2 
\end{align*}
for all $0\le k < k_\d$, where $\Delta_k^\d := D_\R^{\xi_k^\d}(x^\dag, x_k^\d)$ and 
$c_5 := 1 - \eta - \frac{1+\eta}{\tau} - \frac{\gamma}{4\sigma} >0$. 
\end{lemma}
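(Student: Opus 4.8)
The plan is to mirror the argument in the proof of Lemma \ref{Land:lem1}, diverging only at the point where the noise-level cross-term is handled. First I would recycle the chain of estimates leading to (\ref{Land.11}): since $\xi_l^\d \in \p\R(x_l^\d)$, the Bregman identity (\ref{Land.20}) together with (\ref{Land.27}), the update rule (\ref{Land}), and condition (iii) of Assumption \ref{ass1} give, for any index $l$ with $x_l^\d \in B_{2\rho}(x_0)$,
$$\Delta_{l+1}^\d - \Delta_l^\d \le \frac{1}{4\sigma}(\gamma_l^\d)^2\|F'(x_l^\d)^*(F(x_l^\d) - y^\d)\|^2 - \gamma_l^\d\|F(x_l^\d) - y^\d\|^2 + \gamma_l^\d\|F(x_l^\d) - y^\d\|\left((1+\eta)\d + \eta\|F(x_l^\d) - y^\d\|\right).$$
Applying the upper bound in (\ref{gamma}) to the first term then yields
$$\Delta_{l+1}^\d - \Delta_l^\d \le -\left(1 - \eta - \frac{\gamma}{4\sigma}\right)\gamma_l^\d\|F(x_l^\d) - y^\d\|^2 + (1+\eta)\gamma_l^\d\d\|F(x_l^\d) - y^\d\|.$$

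The crucial new step is to absorb the last term using the discrepancy principle rather than Young's inequality. For every $l < k_\d$ the stopping rule (\ref{DP0}) forces $\|F(x_l^\d) - y^\d\| > \tau\d$, so $\d < \tau^{-1}\|F(x_l^\d) - y^\d\|$ and hence $(1+\eta)\d\|F(x_l^\d) - y^\d\| < \frac{1+\eta}{\tau}\|F(x_l^\d) - y^\d\|^2$. Substituting this bound collapses the two residual terms into one and produces exactly
$$\Delta_{l+1}^\d - \Delta_l^\d \le -c_5\gamma_l^\d\|F(x_l^\d) - y^\d\|^2, \qquad 0 \le l < k_\d,$$
with $c_5 = 1 - \eta - \frac{1+\eta}{\tau} - \frac{\gamma}{4\sigma}$; positivity of $c_5$ is guaranteed by (\ref{gamma2}), the first condition ensuring $1 - \eta - \frac{1+\eta}{\tau} > 0$ and the second then ensuring $\frac{\gamma}{4\sigma} < 1 - \eta - \frac{1+\eta}{\tau}$.

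Since the displayed estimate presupposes $x_l^\d \in B_{2\rho}(x_0)$ in order to invoke condition (iii) of Assumption \ref{ass1}, the ball membership and the recursion must be proved together by induction on $l$, exactly as in Lemma \ref{Land:lem1}. The base case $x_0^\d = x_0$ is trivial, and Assumption \ref{ass1}(ii) gives $\Delta_0^\d = D_\R^{\xi_0}(x^\dag, x_0) \le \frac{1}{2}\sigma\rho^2$. In the inductive step, the recursion shows $\Delta_{l+1}^\d \le \Delta_l^\d \le \Delta_0^\d \le \frac{1}{2}\sigma\rho^2$, so the Bregman distances are nonincreasing; strong convexity via (\ref{Land.24}) then yields $\sigma\|x_{l+1}^\d - x^\dag\|^2 \le \frac{1}{2}\sigma\rho^2$, whence $\|x_{l+1}^\d - x^\dag\| \le \rho$, and combining with $\|x^\dag - x_0\| \le \rho$ places $x_{l+1}^\d$ in $B_{2\rho}(x_0)$. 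This simultaneously proves the ball-membership claim for all $0 \le k \le k_\d$ and the recursion.

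Finally, to show $k_\d$ is finite I would argue by contradiction: if the discrepancy principle never triggered, the induction above would keep every iterate in $B_{2\rho}(x_0)$ and the recursion would hold for all $l$, so summing gives $c_5\sum_{k=0}^{n-1}\gamma_k^\d\|F(x_k^\d) - y^\d\|^2 \le \Delta_0^\d$ for every $n$. Using the lower bounds $\gamma_k^\d \ge \underline{\gamma}$ from (\ref{gamma}) and $\|F(x_k^\d) - y^\d\| > \tau\d$, the left side would exceed $c_5\underline{\gamma}(\tau\d)^2 n$, which diverges as $n \to \infty$ — a contradiction. Hence the iteration terminates at a finite $k_\d$. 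The only genuine obstacle is the circular dependence between ball membership and the recursive estimate, which the simultaneous induction resolves cleanly; everything else is a direct reuse of the computation already carried out for Lemma \ref{Land:lem1}.
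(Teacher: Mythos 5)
Your proposal is correct and is essentially the argument the paper intends: the paper's proof of Lemma \ref{Land:lem5} simply defers to \cite[Lemma 3.4]{JW2013} and \cite[Lemma 3.1]{Jin2016}, and those arguments are exactly what you reconstruct --- reuse the chain of estimates up to (\ref{Land.11}), absorb the noise term via $\delta < \tau^{-1}\|F(x_l^\delta)-y^\delta\|$ from the discrepancy principle instead of Young's inequality, run the simultaneous induction for ball membership, and sum the recursion with $\gamma_k^\delta \ge \underline{\gamma}$ to force finiteness of $k_\delta$. No gaps.
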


\begin{proof}
This can be done by essentially following the proof in \cite[Lemma 3.4]{JW2013} or \cite[Lemma 3.1]{Jin2016}. 
\end{proof}

\begin{theorem}\label{Land:thm2}
Let Assumption \ref{ass0}, Assumption \ref{ass1} and Assumption \ref{ass1.5} hold. Let the step-size 
$\gamma_k^\d$ be chosen to satisfy (\ref{gamma}). Assume that $\tau>1$ and $\gamma>0$ are chosen such that 
(\ref{gamma2}) holds and let $k_\d$ be the integer determined by the discrepancy principle (\ref{DP0}). If 
the source condition (\ref{sc0}) holds, then there is a positive constant $C$ such that 
\begin{align}\label{Land.17}
\Delta_{k_\d}^\d \le C \d \quad \mbox{and} \quad 
\|x_{k_\d}^\d - x^\dag\| \le C\d^{1/2}
\end{align}
for all $\d>0$, where $\Delta_{k_\d}^\d := D_\R^{\xi_{k_\d}^\d} (x^\dag, x_{k_\d}^\d)$.
\end{theorem}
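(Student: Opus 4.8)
The goal is the estimate $\Delta_{k_\delta}^\delta \le C\delta$ in (\ref{Land.17}), from which $\|x_{k_\delta}^\delta - x^\dag\| \le C\delta^{1/2}$ follows at once via the strong convexity bound (\ref{Land.24}). The plan is to compare the discrepancy-principle index $k_\delta$ with the \emph{a priori} index $\hat k_\delta = [c\delta^{-1}]$ from Theorem \ref{Land:thm1}, with $c>0$ fixed. For $\delta$ small this $\hat k_\delta$ is admissible (since $c_1\delta^2\hat k_\delta \le c_1 c\delta < \frac12\sigma\rho^2$), so both the conclusion $\Delta_{\hat k_\delta}^\delta \le C\delta$ of Theorem \ref{Land:thm1} and the boundedness $\|\lambda_k^\delta - \lambda^\dag\| \le C$ of Lemma \ref{Land:lem4} are available for $0\le k\le\hat k_\delta$. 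The range $\delta>\bar\delta$ is trivial, since the monotonicity in Lemma \ref{Land:lem5} gives $\Delta_{k_\delta}^\delta \le \Delta_0^\delta = D_\R^{\xi_0}(x^\dag,x_0) \le (D_\R^{\xi_0}(x^\dag,x_0)/\bar\delta)\,\delta$ there; so I focus on $0<\delta\le\bar\delta$.

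The core observation, valid for any $k_\delta$, is a direct bound on $\Delta_{k_\delta}^\delta$ coming from the source condition. Writing $\xi^\dag - \xi_{k_\delta}^\delta = A^*(\lambda^\dag - \lambda_{k_\delta}^\delta)$ and using the Bregman identity together with $D_\R^{\xi^\dag}(x_{k_\delta}^\delta, x^\dag)\ge 0$ gives
\begin{align*}
\Delta_{k_\delta}^\delta \le \langle \xi^\dag - \xi_{k_\delta}^\delta, x^\dag - x_{k_\delta}^\delta\rangle = \langle \lambda^\dag - \lambda_{k_\delta}^\delta, A(x^\dag - x_{k_\delta}^\delta)\rangle \le \|\lambda_{k_\delta}^\delta - \lambda^\dag\|\,\|A(x^\dag - x_{k_\delta}^\delta)\|.
\end{align*}
The factor $\|A(x^\dag - x_{k_\delta}^\delta)\|$ is $O(\delta)$: the range-invariance estimate established just before Lemma \ref{Land:lem2} yields $\|A(x_{k_\delta}^\delta - x^\dag)\| \le \|F(x_{k_\delta}^\delta)-y\| + \frac12\kappa_0\|x_{k_\delta}^\delta - x^\dag\|\,\|A(x_{k_\delta}^\delta - x^\dag)\|$, and since $\|x_{k_\delta}^\delta - x^\dag\|\le\rho$ with $\kappa_0\rho$ small this gives $\|A(x^\dag - x_{k_\delta}^\delta)\| \le 2\|F(x_{k_\delta}^\delta)-y\| \le 2(\tau+1)\delta$ by the discrepancy principle (\ref{DP0}) and $\|y^\delta - y\|\le\delta$. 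Hence $\Delta_{k_\delta}^\delta \le 2(\tau+1)\delta\,\|\lambda_{k_\delta}^\delta - \lambda^\dag\|$, and everything reduces to bounding $\|\lambda_{k_\delta}^\delta - \lambda^\dag\|$ uniformly.

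I would then split on the comparison. If $k_\delta \le \hat k_\delta$, Lemma \ref{Land:lem4} applies at $k=k_\delta$ and gives $\|\lambda_{k_\delta}^\delta - \lambda^\dag\| \le C$ outright, whence $\Delta_{k_\delta}^\delta \le 2(\tau+1)C\delta$. If instead $k_\delta > \hat k_\delta$, I would not use the reduction above at all (indeed $\lambda$-boundedness may fail in this regime); instead the monotonicity of $\{\Delta_k^\delta\}$ from Lemma \ref{Land:lem5}, valid for $0\le k\le k_\delta$, together with $\hat k_\delta < k_\delta$ gives $\Delta_{k_\delta}^\delta \le \Delta_{\hat k_\delta}^\delta \le C\delta$ by Theorem \ref{Land:thm1}. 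In either case $\Delta_{k_\delta}^\delta = O(\delta)$, and (\ref{Land.24}) then yields $\|x_{k_\delta}^\delta - x^\dag\|^2 \le \Delta_{k_\delta}^\delta/\sigma = O(\delta)$, which is (\ref{Land.17}).

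The main obstacle is exactly the uniform control of $\|\lambda_{k_\delta}^\delta - \lambda^\dag\|$. Unlike the \emph{a priori} setting, where the stopping index is prescribed to be $O(\delta^{-1})$, the discrepancy principle only guarantees a priori that $k_\delta = O(\delta^{-2})$ (from summing the descent inequality of Lemma \ref{Land:lem5} against $\|F(x_k^\delta)-y^\delta\| > \tau\delta$), a regime in which the bound $\|\lambda_k^\delta - \lambda^\dag\| \lesssim 1 + k\delta$ underlying Lemma \ref{Land:lem4} degenerates. The dichotomy circumvents this: when $k_\delta$ exceeds $\hat k_\delta$ the source-condition reduction is abandoned in favour of pure monotonicity and the already-proven \emph{a priori} rate, while when $k_\delta \le \hat k_\delta$ the auxiliary sequence is controlled directly by Lemma \ref{Land:lem4}. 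Care is only needed to confirm that the constants ($\tau$, $\gamma$ satisfying (\ref{gamma2}), which forces (\ref{gamma0})) make Theorem \ref{Land:thm1} and Lemma \ref{Land:lem4} simultaneously applicable.
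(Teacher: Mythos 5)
Your proposal is correct and follows essentially the same route as the paper's proof: the same dichotomy between the discrepancy index $k_\delta$ and the {\it a priori} index $\hat k_\delta=[c\delta^{-1}]$, with the source-condition reduction $\Delta_{k_\delta}^\delta \le \|\lambda_{k_\delta}^\delta - \lambda^\dag\|\,\|A(x_{k_\delta}^\delta - x^\dag)\|$ plus Lemma \ref{Land:lem4} when $k_\delta \le \hat k_\delta$, and monotonicity from Lemma \ref{Land:lem5} combined with Theorem \ref{Land:thm1} when $k_\delta > \hat k_\delta$. One small repair: your bound on $\|A(x_{k_\delta}^\delta - x^\dag)\|$ via the range-invariance estimate with absorption needs $\kappa_0\rho \le 1$, which Theorem \ref{Land:thm2} does not assume; instead invoke Assumption \ref{ass1}(iii) directly, which yields $\|A(x_{k_\delta}^\delta - x^\dag)\| \le (1+\eta)\|F(x_{k_\delta}^\delta) - y\| \le (1+\eta)(1+\tau)\delta$ with no smallness condition, exactly as the paper does.
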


\begin{proof}
By the strong convexity of $\R$, the second estimate in (\ref{Land.17}) follows from the first one.
Thus we only need to show the first estimate in (\ref{Land.17}).

Since $\tau>1$ and $\gamma>0$ satisfy (\ref{gamma2}), the step-size $\gamma_k^\d$ satisfies (\ref{gamma}) 
and (\ref{gamma0}). Thus, by taking $\hat k_\d := [\d^{-1}]$, we may use Theorem \ref{Land:thm1} to 
conclude that there exists constants $C'>0$ and $\bar \d>0$ such that  
\begin{align}\label{Land.29}
D_\R^{\xi_{\hat k_\d}^\d} (x^\dag, x_{\hat k_\d}^\d) \le C' \d, \quad \forall 0< \d \le \bar \d. 
\end{align}
We will use this result to show the first estimate in (\ref{Land.17}). According to Lemma \ref{Land:lem5},
$\Delta_{k_\d}^\d \le D_\R^{\xi_0}(x^\dag, x_0) < \infty$. Thus, we can find a positive constant $C$ 
such that $\Delta_{k_\d}^\d \le C \d$ for $\d>\bar \d$. In the following we assume $0<\d \le \bar \d$. 

We consider $k_\d$ according to two cases. If $k_\d \ge \hat k_\d$, then we may use Lemma \ref{Land:lem5} 
and (\ref{Land.29}) to obtain  
\begin{align*}
\Delta_{k_\d}^\d \le D_\R^{\xi_{\hat k_\d}^\d} (x^\dag, x_{\hat k_\d}^\d) \le C' \d.
\end{align*}
It remains only to consider the case $k_\d < \hat k_\d$. According to Lemma \ref{Land:lem4}, there 
is a constant $C$ independent of $\d$ such that $\|\la_{k_\d}^\d - \la^\dag\|\le C$. Thus, by using 
$\xi_{k_\d}^\d = \xi_0 + A^* \la_{k_\d}^\d$ and the source condition (\ref{sc0}), we can obtain 
\begin{align*}
\Delta_{k_\d}^\d  
& \le D_{\R}^{\xi_{k_\d}^\d} (x^\dag, x_{k_\d}^\d) + D_{\R}^{\xi^\dag} (x_{k_\d}^\d, x^\dag) 
= \l \xi_{k_\d}^\d - \xi^\dag, x_{k_\d}^\d - x^\dag\r \\
& = \l A^* \la_{k_\d}^\d - A^* \la^\dag, x_{k_\d}^\d - x^\dag\r  
= \l \la_{k_\d}^\d - \la^\dag, A(x_{k_\d}^\d - x^\dag)\r \\
& \le \|\la_{k_\d}^\d - \la^\dag\| \|A(x_{k_\d}^\d - x^\dag)\| \\
& \le C \|A (x_{k_\d}^\d - x^\dag)\|.
\end{align*}
By Assumption \ref{ass1} (iii), $\|y^\d - y\| \le \d$, and $\|F(x_{k_\d}^\d) - y^\d\| \le \tau \d$,
we can conclude that
\begin{align*}
\Delta_{k_\d}^\d  
& \le C (1+\eta)\|F(x_{k_\d}^\d) - y\|\\
& \le C (1+\eta)\left(\d + \|F(x_{k_\d}^\d) - y^\d\|\right) \\
& \le C (1+\eta)(1 + \tau) \d. 
\end{align*}
The proof is therefore complete. 
\end{proof}

By applying the above result to the method (\ref{Land}) with the step-size $\gamma_k^\d$ chosen by those 
rules listed as (i) and (ii) in Remark \ref{Land:rk1}, we immediately obtain the following result.

\begin{corollary}\label{Land:cor1}
Let Assumption \ref{ass0}, Assumption \ref{ass1} and Assumption \ref{ass1.5} hold. Consider the method 
(\ref{Land}) with $\gamma_k^\d$ chosen by one of the following rules:
\begin{enumerate}[leftmargin = 0.9cm]
\item[\emph{(i)}] $\gamma_k^\d = \gamma/L^2$ for some $\gamma>0$;

\item[\emph{(ii)}] $\gamma_k^\d = \min\left\{\frac{\gamma \|F(x_k^\d) - y^\d\|^2}{\|F'(x_k^\d)^*(F(x_k^\d)-y^\d)\|^2}, 
\bar \gamma\right\}$ for some $\gamma>0$ and $\bar \gamma>0$.
\end{enumerate}
Assume $\tau>1$ and $\gamma > 0$ are chosen such that (\ref{gamma2}) holds and let $k_\d$ be the 
integer determined by the discrepancy principle (\ref{DP0}). If the source condition (\ref{sc0}) 
holds, then there is a constant $C>0$ such that
\begin{align*}
D_\R^{\xi_{k_\d}^\d}(x^\dag, x_{k_\d}^\d) \le C \d \quad \mbox{ and } \quad 
\|x_{k_\d}^\d - x^\dag\| \le C \d^{1/2}
\end{align*}
for all $\d >0$. 
\end{corollary}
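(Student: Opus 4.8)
The plan is to deduce this corollary directly from Theorem~\ref{Land:thm2} by verifying that each of the two listed step-size rules falls within the hypotheses of that theorem. Theorem~\ref{Land:thm2} already delivers exactly the two desired estimates $D_\R^{\xi_{k_\d}^\d}(x^\dag, x_{k_\d}^\d)\le C\d$ and $\|x_{k_\d}^\d-x^\dag\|\le C\d^{1/2}$, under Assumptions~\ref{ass0}--\ref{ass1.5}, the source condition (\ref{sc0}), the condition (\ref{gamma2}) relating $\tau$ and $\gamma$, and the requirement that $\gamma_k^\d$ obey the two-sided bound (\ref{gamma}). Since the ambient assumptions and the source condition are carried over verbatim into the corollary, and (\ref{gamma2}) is assumed outright, the only thing left to check is that rules (i) and (ii) each produce a step-size satisfying (\ref{gamma}) for suitable positive constants $\underline{\gamma}$, $\gamma$ and $\bar\gamma$.

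The verification is precisely the content of items (i) and (ii) of Remark~\ref{Land:rk1}, so the cleanest route is to invoke that remark. First I would treat rule (i): with $\gamma_k^\d=\gamma/L^2$ constant, the upper bound in (\ref{gamma}) holds because $\|F'(x_k^\d)^*(F(x_k^\d)-y^\d)\|\le L\|F(x_k^\d)-y^\d\|$ by (iii) of Assumption~\ref{ass1}, which gives $\gamma/L^2\le \gamma\|F(x_k^\d)-y^\d\|^2/\|F'(x_k^\d)^*(F(x_k^\d)-y^\d)\|^2$, and the lower bound holds trivially with $\underline{\gamma}=\bar\gamma=\gamma/L^2$. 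For rule (ii), the explicit minimum already respects the upper bound by construction, while the same operator-norm inequality shows each term in the minimum is bounded below by $\min\{\gamma/L^2,\bar\gamma\}$, so (\ref{gamma}) holds with $\underline{\gamma}=\min\{\gamma/L^2,\bar\gamma\}$. In both cases the compatibility condition (\ref{gamma0}) needed by the underlying lemmas is subsumed by (\ref{gamma2}), since (\ref{gamma2}) forces $\gamma<4\sigma(1-\eta)$ and hence $c_0>0$.

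With (\ref{gamma}) confirmed for both rules, Theorem~\ref{Land:thm2} applies directly and yields the stated conclusions. I do not anticipate any genuine obstacle here, as this is a routine specialization; the only point requiring a moment's care is ensuring that the constants $\gamma$ and $\bar\gamma$ appearing in the corollary's rules are taken to satisfy (\ref{gamma2}), which is exactly what the hypothesis ``$\tau>1$ and $\gamma>0$ are chosen such that (\ref{gamma2}) holds'' guarantees. Thus the proof reduces to two short lines citing Remark~\ref{Land:rk1} and Theorem~\ref{Land:thm2}.
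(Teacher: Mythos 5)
Your proposal is correct and follows exactly the paper's route: the paper presents this corollary as an immediate consequence of Theorem \ref{Land:thm2}, with the verification that rules (i) and (ii) satisfy (\ref{gamma}) (via $\|F'(x_k^\d)^*(F(x_k^\d)-y^\d)\|\le L\|F(x_k^\d)-y^\d\|$) delegated to Remark \ref{Land:rk1}, precisely as you argue. Your observation that (\ref{gamma2}) subsumes (\ref{gamma0}) is also the same point made inside the paper's proof of Theorem \ref{Land:thm2}, so nothing is missing.
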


In Theorem \ref{Land:thm2} and Corollary \ref{Land:cor1}, $\tau>1$ and $\gamma>0$ are required to satisfy 
(\ref{gamma2}). 
In applications we may need to take $\tau$ close to $(1+\eta)/(1-\eta)$ in order to get more 
accurate reconstruction results. As a direct consequence, the theoretical result only allows to 
use very small $\gamma$. This may lead to increase the required number of iterations and thus
consume more computational time. For the step-size $\gamma_k^\d$ given by (iii) in Remark \ref{Land:rk1},
fortunately we have the following result which allows $\gamma_0$ to be a much larger number. 

\begin{theorem}\label{Land:thm3}
Let Assumption \ref{ass0}, Assumption \ref{ass1} and Assumption \ref{ass1.5} hold. Let 
$\tau>(1+\eta)/(1-\eta)$ and let the step-size $\gamma_k^\d$ be chosen as
\begin{align}\label{gamma3}
\gamma_k^\d = \min\left\{\frac{\gamma_0((1-\eta) \|r_k^\d\|-(1+\eta) \d)\|r_k^\d\|}{\|F'(x_k^\d)^* r_k^\d\|^2}, \bar \gamma\right\}
\end{align}
with $\bar\gamma>0$ and $0<\gamma_0<4\sigma$ whenever $\|r_k^\d\| >\tau \d$, where $r_k^\d:= F(x_k^\d)-y^\d$. 
Then the discrepancy principle (\ref{DP0}) outputs a finite integer $k_\d$. If the source condition 
(\ref{sc0}) holds, then there is a constant $C>0$ such that
\begin{align*}
D_\R^{\xi_{k_\d}^\d} (x^\dag, x_{k_\d}^\d) \le C \d \quad \mbox{and} \quad 
\|x_{k_\d}^\d - x^\dag\| \le C\d^{1/2}
\end{align*}
for all $\d>0$. 
\end{theorem}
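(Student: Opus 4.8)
The plan is to follow the proof of Theorem~\ref{Land:thm2}, but the obstacle is immediate: for the step-size (\ref{gamma3}) the relevant value of $\gamma$ is $\gamma_0(1-\eta)$, and with only $0<\gamma_0<4\sigma$ this may violate the restrictive condition (\ref{gamma2}); hence Lemma~\ref{Land:lem5}, and therefore Theorem~\ref{Land:thm2} itself, is not directly available. I would first establish a replacement for Lemma~\ref{Land:lem5} that exploits the special structure of (\ref{gamma3}).

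To this end I would return to the estimate (\ref{Land.11}), valid under Assumption~\ref{ass1}, and write it as
$$
\Delta_{k+1}^\d - \Delta_k^\d \le \frac{1}{4\sigma}(\gamma_k^\d)^2 \|F'(x_k^\d)^* r_k^\d\|^2 - \gamma_k^\d A_k, \qquad A_k := \big((1-\eta)\|r_k^\d\|-(1+\eta)\d\big)\|r_k^\d\|,
$$
for $0\le k<k_\d$. In either branch of the minimum defining (\ref{gamma3}), a one-line substitution shows that the first term on the right is at most $\frac{\gamma_0}{4\sigma}\gamma_k^\d A_k$, so that
$$
\Delta_{k+1}^\d - \Delta_k^\d \le -\left(1-\frac{\gamma_0}{4\sigma}\right)\gamma_k^\d A_k.
$$
Since $\|r_k^\d\|>\tau\d$ for $0\le k<k_\d$ and $\tau>(1+\eta)/(1-\eta)$, one has $A_k \ge (1-\eta-(1+\eta)/\tau)\|r_k^\d\|^2$, whence $\Delta_{k+1}^\d - \Delta_k^\d \le -c_6\gamma_k^\d\|r_k^\d\|^2$ with $c_6 := (1-\gamma_0/(4\sigma))(1-\eta-(1+\eta)/\tau)>0$. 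This supplies the three facts that replace Lemma~\ref{Land:lem5}: $\{\Delta_k^\d\}$ is non-increasing, so $\Delta_k^\d\le\Delta_0^\d\le\frac12\sigma\rho^2$ and hence $x_k^\d\in B_{2\rho}(x_0)$ by the strong convexity of $\R$, for all $0\le k\le k_\d$; the discrepancy principle terminates, because each step with $\|r_k^\d\|>\tau\d$ lowers $\Delta_k^\d$ by at least $c_6\underline{\gamma}\tau^2\d^2>0$ while $\Delta_k^\d\ge0$; and the energy estimate $c_6\sum_{k=0}^{k_\d-1}\gamma_k^\d\|r_k^\d\|^2\le\Delta_0^\d$ holds.

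Next I would verify, exactly as in (iii) of Remark~\ref{Land:rk1}, that for $0\le k<k_\d$ the step-size (\ref{gamma3}) obeys (\ref{gamma}) with $\gamma=\gamma_0(1-\eta)$ and $\underline{\gamma}=\min\{\gamma_0(1-\eta-(1+\eta)/\tau)/L^2,\bar\gamma\}$, while (\ref{gamma0}) holds since $c_0=\frac{1-\eta}{2}(1-\gamma_0/(4\sigma))>0$. Because the iterates lie in $B_{2\rho}(x_0)$ for $0\le k\le k_\d$, the recursion (\ref{Land6}) of Lemma~\ref{Land:lem2} is valid for $0\le k<k_\d$, so the machinery of Lemma~\ref{Land:lem4} is available up to index $k_\d$. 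I would then mirror the structure of the proof of Theorem~\ref{Land:thm2}. The case $\d>\bar\d$ is immediate from $\Delta_{k_\d}^\d\le\Delta_0^\d$. For $0<\d\le\bar\d$, put $\hat k_\d:=[\d^{-1}]$ and split. If $k_\d\ge\hat k_\d$, then (\ref{gamma3}) satisfies (\ref{gamma}) and (\ref{gamma0}) on $0\le k<\hat k_\d$, so Theorem~\ref{Land:thm1} gives $\Delta_{\hat k_\d}^\d\le C'\d$, and the monotonicity above yields $\Delta_{k_\d}^\d\le\Delta_{\hat k_\d}^\d\le C'\d$. If $k_\d<\hat k_\d$, then $\d k_\d<1$; running the bound of Lemma~\ref{Land:lem4} up to $k_\d$ and using the energy estimate together with $\d k_\d<1$ to control its two terms gives $\|\la_{k_\d}^\d-\la^\dag\|\le C$.

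Finally, as in Theorem~\ref{Land:thm2}, the source condition (\ref{sc0}) and the identity $\xi_{k_\d}^\d=\xi_0+A^*\la_{k_\d}^\d$ give
$$
\Delta_{k_\d}^\d \le \l\la_{k_\d}^\d-\la^\dag,\, A(x_{k_\d}^\d-x^\dag)\r \le C\|A(x_{k_\d}^\d-x^\dag)\| \le C(1+\eta)(1+\tau)\d,
$$
where the last inequality uses Assumption~\ref{ass1}(iii), $\|y^\d-y\|\le\d$ and the stopping inequality $\|r_{k_\d}^\d\|\le\tau\d$. In both cases $\Delta_{k_\d}^\d=O(\d)$, and the strong convexity of $\R$ then gives $\|x_{k_\d}^\d-x^\dag\|=O(\d^{1/2})$. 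The main obstacle is the case analysis in the second paragraph producing a decrement constant $c_6$ that is independent of the step-size and stays positive throughout $0<\gamma_0<4\sigma$; this is exactly what decouples the argument from condition (\ref{gamma2}) and allows $\gamma_0$ to be much larger than Theorem~\ref{Land:thm2} permits.
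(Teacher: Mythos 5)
Your proposal is correct and takes essentially the same route as the paper: the paper's proof likewise exploits the structure of (\ref{gamma3}) to absorb the quadratic term of (\ref{Land.11}) into the descent estimate, yielding $\Delta_{k+1}^\d - \Delta_k^\d \le -\left(1-\frac{\gamma_0}{4\sigma}\right)\left(1-\eta-\frac{1+\eta}{\tau}\right)\gamma_k^\d\|r_k^\d\|^2$ as the replacement for Lemma \ref{Land:lem5}, deduces finiteness of $k_\d$ by summation, and then repeats the two-case comparison ($k_\d\ge\hat k_\d$ versus $k_\d<\hat k_\d$) from the proof of Theorem \ref{Land:thm2}. The only immaterial difference is in the case $k_\d<\hat k_\d$: the paper extends the step-size for $k>k_\d$ by the rule in (iii) of Remark \ref{Land:rk1} so that Theorem \ref{Land:thm1} and Lemma \ref{Land:lem4} apply verbatim, whereas you re-run the argument of Lemma \ref{Land:lem4} only up to $k_\d$ using your energy estimate and $\d k_\d<1$; both are valid.
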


\begin{proof}
The key point is to show that the same result in Lemma \ref{Land:lem5} holds under the relaxed 
requirement on $\gamma_0$. To see this, we note that, when $x_k^\d \in B_{2\rho}(x_0)$ with 
$\|r_k^\d\| >\tau\d$ and $\gamma_k^\d$ is chosen by (\ref{gamma3}), we may follow the derivation 
of (\ref{Land.11}) to obtain 
\begin{align*}
\Delta_{k+1}^\d - \Delta_k^\d
& \le \frac{1}{4\sigma} (\gamma_k^\d)^2 \|F'(x_k^\d)^* r_k^\d\|^2 
- \left((1-\eta) \|r_k^\d\| - (1+\eta) \d\right) \gamma_k^\d \|r_k^\d\| \\
& \le - \left(1- \frac{\gamma_0}{4\sigma}\right) \left((1-\eta) \|r_k^\d\| - (1+\eta) \d\right) \gamma_k^\d \|r_k^\d\| \\
& \le - \left(1- \frac{\gamma_0}{4\sigma}\right) \left(1-\eta - \frac{1+\eta}{\tau} \right) \gamma_k^\d \|r_k^\d\|^2. 
\end{align*}
Recall that $0< \gamma_0 < 4\sigma$ and $\tau > (1+\eta)/(1-\eta)$. Based on the above inequality, we 
may use a similar induction argument in the proof of Lemma \ref{Land:lem1} to show that, if $n \ge 0$ 
is an integer such that $\|r_k^\d\| >\tau \d$ for all $0\le k \le n$, then  $x_k^\d \in B_{2\rho}(x_0)$ 
for all $0\le k \le n$ and  
\begin{align}\label{Land.14}
\Delta_{k+1}^\d \le \Delta_k^\d - c_5 \|r_k^\d\|^2, \quad \forall 0\le k < n,
\end{align}
where
$$
c_5 := \left(1- \frac{\gamma_0}{4\sigma}\right) \left(1-\eta - \frac{1+\eta}{\tau} \right)
\min\left\{\frac{\gamma_0}{L^2} \left(1-\eta - \frac{1+\eta}{\tau}\right), \bar \gamma\right\} > 0. 
$$
If the discrepancy principle (\ref{DP0}) does not output a finite integer, then $\|r_k^\d\| > \tau \d$ 
for all integers $k \ge 0$. Consequently, it follows from (\ref{Land.14}) that 
\begin{align*}
c_5 \tau^2 \d^2 (n+1) \le c_5 \sum_{k=0}^n \|r_k^\d\|^2 \le \Delta_0^\d = D_\R^{\xi_0}(x^\dag, x_0) <\infty
\end{align*}
for any integer $n \ge 0$. Letting $n \to \infty$ gives a contradiction. Therefore, the discrepancy 
principle (\ref{DP0}) determines a finite integer $k_\d$  with 
\begin{align}\label{Land.16}
x_k^\d \in B_{2\rho}(x_0) \mbox{ for } 0\le k \le k_\d \quad \mbox{ and } \quad 
\Delta_{k+1}^\d \le \Delta_k^\d \mbox{ for } 0\le k < k_\d. 
\end{align}

Finally let us derive the given convergence rate. To this end, we define $\gamma_k^\d$ as in (iii) 
of Remark \ref{Land:rk1} for $k >k_\d$. For this choice of step-size, we know that the result in 
Theorem \ref{Land:thm1} holds, i.e. for $\hat k_\d = [\d^{-1}]$ there exist positive constants $C>0$ 
and $\bar \d>0$ such that 
$$
D_\R^{\xi_{\hat k_\d}^\d} (x^\dag, x_{\hat k_\d}^\d) \le C \d, \quad \forall 0<\d \le \bar \d. 
$$
Based on this inequality and (\ref{Land.16}), we may use the same argument in the proof of Theorem 
\ref{Land:thm2} to derive that $\Delta_{k_\d}^\d \le C \d$ for all $\d>0$. The proof is complete. 
\end{proof}

\begin{remark}
{\rm
(i) When $X$ is a Hilbert space, $\R(x) = \frac{1}{2} \|x\|^2$ and $\gamma_k^\d = \mbox{constant}$, the 
method (\ref{Land}) becomes the classical Landweber iteration (\ref{cLand}) studied in \cite{HNS1995} 
in which the convergence rate result under the source condition (\ref{sc0}) requires $\|\la^\dag\|$ to 
be sufficiently small. It is interesting to see that our result does not require this smallness condition. 

(ii) For the method (\ref{Land}) with a constant step-size and $F$ being a bounded linear operator, 
convergence rates have been established in \cite{Jin2022} under general variational source conditions by 
interpreting the method as a dual gradient method. Our work in this paper, restricted to this special 
case, can even present new results by allowing the use of variable step-sizes. 
}
\end{remark}

\section{\bf Convergence rate of stochastic mirror descent method}\label{sect4}
\setcounter{equation}{0}

In this section we will extend the idea to deal with a stochastic version of the method (\ref{Land}) for solving nonlinear ill-posed system 
\begin{align}\label{sys}
F_i(x) = y_i, \quad i = 1, \cdots, N
\end{align}
with exact data, where, for each $i = 1, \cdots, N$, the operator $F_i: X\to Y_i$ is a Fr\'{e}chet 
differentiable operator from a common Banach space $X$ to a Hilbert space $Y_i$. We assume (\ref{sys}) has a 
solution and look for a solution whose feature is described by a proper, lower semi-continuous, strongly 
convex function $\R: X \to (-\infty, \infty]$. Let $(x_0, \xi_0) \in X \times X^*$ be an initial guess 
with $x_0 \in X$ and $\xi_0 \in \p \R(x_0)$, we intend to find a solution $x^\dag$ of 
(\ref{sys}) such that 
\begin{align}\label{sys.min}
D_\R^{\xi_0}(x^\dag, x_0) = \min\left\{D_\R^{\xi_0}(x, x_0) : F_i(x) = y_i, \ i=1, \cdots, N\right\}.
\end{align}
Let $Y_1\times \cdots \times Y_N$ be the product space of $Y_1, \cdots, Y_N$ with the natural inner product 
inherited from those of $Y_i$. By introducing the operator 
$F: X \to Y_1 \times \cdots \times Y_N$ as 
\begin{align*}
F(x):= (F_1(x), \cdots, F_N(x))
\end{align*}
and $y := (y_1, \cdots, y_N)$, we can write (\ref{sys}) and (\ref{sys.min}) into the form (\ref{Land.eq}) 
and (\ref{Land.min}) respectively. In this way, we may use the method (\ref{Land}) to solve the problem. 
Correspondingly, the updating formula from $\xi_k$ to $\xi_{k+1}$ takes the form 
\begin{align*}
\xi_{k+1} = \xi_k - \gamma_k \sum_{i=1}^N F_i'(x_k)^*(F_i(x_k) - y_i)
\end{align*}
which requires calculating the terms $F_i'(x_k)^*(F_i(x_k) - y_i)$ for all $i = 1, \cdots, N$ at each 
iteration step and hence it can be time-consuming when $N$ is large. In order to make the method more 
efficient, we may take a random term from the sum in the above equation to update $\xi_{k+1}$. This leads 
to the following stochastic mirror descent method.

\begin{algorithm}\label{alg:SMD}
Take $(x_0, \xi_0) \in X \times X^*$ with $x_0 \in \emph{dom}(F)$ and $\xi_0 \in \p \R(x_0)$. Let $\{\gamma_k\}$ be a given sequence of positive numbers. For $k \ge 0$ do the following:
\begin{enumerate}[leftmargin = 0.9cm]
\item[\emph{(i)}] Pick $i_k \in \{1, \cdots, N\}$ randomly via the uniform distribution;

\item[\emph{(ii)}] Update $\xi_{k+1} = \xi_k - \gamma_k F_{i_k}'(x_k)^* (F_{i_k}(x_k) - y_{i_k})$; 

\item[\emph{(iii)}] Calculate $x_{k+1} = \arg\min_{x\in X} \left\{\R(x) - \l \xi_{k+1}, x\r\right\}$.
\end{enumerate}
\end{algorithm}

When every $F_i$ is a bounded linear operator, Algorithm \ref{alg:SMD} has been proposed and analyzed in 
\cite{JLZ2023}; under Assumption \ref{ass0} on $\R$ the convergence has been established and, when the 
sought solution satisfies a benchmark source condition, a convergence rate has been derived. 
In order to carry out the analysis of Algorithm \ref{alg:SMD} for nonlinear ill-posed system, beside 
Assumption \ref{ass0} on $\R$, we also assume the following standard conditions on $F_i$ for 
$i =1, \cdots, N$. 

\begin{Assumption}\label{ass2}
\begin{enumerate}[leftmargin = 0.9cm]
\item[\emph{(i)}] $X$ is a Banach space, $Y_i$ is a Hilbert space for each $i$.

\item[\emph{(ii)}] There exists $\rho>0$ such that $B_{2\rho}(x_0)\subset \emph{dom}(F)$ and 
(\ref{sys.min}) has a solution $x^\dag$ such that $D_\R^{\xi_0} (x^\dag, x_0) \le \sigma \rho^2$.

\item[\emph{(iii)}] There exists $0\le \eta < 1$ such that 
\begin{align*}
\|F_i(\tilde x) - F_i(x) - F_i'(x) (\tilde x - x)\| \le \eta \|F_i(\tilde x) - F_i(x)\|
\end{align*}
for all $\tilde x, x \in B_{2\rho}(x_0)$ and $i = 1, \cdots, N$.

\item[\emph{(iv)}] There is $\kappa_0\ge 0$ such that for each $i\in \{1, \cdots, N\}$ and each 
$x \in B_{2\rho}(x_0)$, there exists a bounded linear operator $Q_x^i: Y_i \to Y_i$ such that 
\begin{align*}
F_i'(x) = Q_x^i F_i'(x^\dag) \quad \mbox{and} \quad \|I - Q_x^i\| \le \kappa_0 \|x-x^\dag\|.
\end{align*}
\end{enumerate}
\end{Assumption}

Note that when $N = 1$, Assumption \ref{ass2} reduces to Assumption \ref{ass1} and Assumption \ref{ass1.5}. 
Let 
$$
L := \max_{i= 1, \cdots, N} \sup_{x\in B_{2\rho}(x_0)} \|F_i'(x)\|.
$$
From (iv) of Assumption \ref{ass2} it is easy to see $L <\infty$. From simplicity of exposition, 
throughout this section we set $A_i := F_i'(x^\dag)$ and define $A: X \to Y_1\times \cdots \times Y_N$ by 
\begin{align*}
A x := (A_1 x, \cdots, A_N x), \quad x \in X.
\end{align*}
It is easy to see that the adjoint $A^*$ of $A$ is given by 
\begin{align*}
A^* z = \sum_{i=1}^N A_i^* z_i, \quad \forall z:=(z_1, \cdots, z_N) \in Y_1\times \cdots \times Y_N.
\end{align*}
Assumption \ref{ass2} will enable us to derive a convergence rate of Algorithm \ref{alg:SMD} under the 
source condition (\ref{sc0}) of the sought solution $x^\dag$. i.e. there exists  $\la^\dag = (\la_1^\dag, \cdots, \la_N^\dag) \in Y_1\times \cdots \times Y_N$ such that 
\begin{align}\label{sc1}
\xi^\dag := \xi_0 + A^*\la^\dag = \xi_0 + \sum_{i=1}^N A_i^* \la_i^\dag \in \p \R(x^\dag). 
\end{align}

In the following result we first show that Algorithm \ref{alg:SMD} is well-defined and the Bregman 
distance $D_\R^{\xi_k}(x^\dag, x_k)$ is monotonically decreasing. 

\begin{lemma}\label{Land:lem6}
Let Assumption \ref{ass0} and (i)-(iii) of Assumption \ref{ass2} hold with $L<\infty$. Consider Algorithm 
\ref{alg:SMD}. Let $\bar \gamma := \sup_k \gamma_k$ and assume 
\begin{align}\label{sys.2}
c_6 := 1- \eta - \frac{\bar \gamma L^2}{4 \sigma} >0. 
\end{align}
Then $x_k \in B_{2\rho}(x_0)$ and 
\begin{align*}
\Delta_{k+1} - \Delta_k \le - c_6 \gamma_k \|F_{i_k}(x_k) - y_{i_k}\|^2
\end{align*}
for all integers $k \ge 0$, where $\Delta_k:= D_\R^{\xi_k}(x^\dag, x_k)$. 
\end{lemma}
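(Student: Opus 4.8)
The plan is to mirror the induction argument behind Lemma~\ref{Land:lem1}, adapted to the exact-data, single-component setting of Algorithm~\ref{alg:SMD}. A first observation is that the asserted inequality is a \emph{pathwise} statement, holding for every realization of the random index sequence $i_0, i_1, \cdots$; thus no expectation or measurability issue enters here, and the randomness will matter only later when averaging. A preliminary remark used throughout is that (ii) of Assumption~\ref{ass2} together with the strong convexity bound (\ref{Land.24}) gives $\sigma\|x^\dag - x_0\|^2 \le \Delta_0 \le \sigma\rho^2$, hence $\|x^\dag - x_0\| \le \rho$ and $x^\dag \in B_{2\rho}(x_0)$, so the tangential cone condition may be invoked at any iterate lying in $B_{2\rho}(x_0)$. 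The proof then proceeds by induction on $k$, simultaneously establishing $x_k \in B_{2\rho}(x_0)$ and the one-step decrease.

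For the one-step estimate, suppose $x_k \in B_{2\rho}(x_0)$. Step (ii) of Algorithm~\ref{alg:SMD} gives $\xi_{k+1} - \xi_k = -\gamma_k F_{i_k}'(x_k)^*(F_{i_k}(x_k) - y_{i_k})$. Applying the Bregman identity (\ref{Land.20}) with $x = x^\dag$, $(x_1, \xi_1) = (x_k, \xi_k)$ and $(x_2, \xi_2) = (x_{k+1}, \xi_{k+1})$, followed by the quadratic bound (\ref{Land.27}), I would obtain
\begin{align*}
\Delta_{k+1} - \Delta_k
&= D_\R^{\xi_{k+1}}(x_k, x_{k+1}) + \l \xi_{k+1} - \xi_k, x_k - x^\dag\r \\
&\le \frac{1}{4\sigma}\|\xi_{k+1} - \xi_k\|^2
- \gamma_k \l F_{i_k}(x_k) - y_{i_k}, F_{i_k}'(x_k)(x_k - x^\dag)\r.
\end{align*}
Writing $r_k := F_{i_k}(x_k) - y_{i_k}$, the first term is bounded using $\|F_{i_k}'(x_k)\| \le L$ to give $\|\xi_{k+1} - \xi_k\|^2 \le \gamma_k^2 L^2 \|r_k\|^2$.

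Since the data is exact, $y_{i_k} = F_{i_k}(x^\dag)$ and hence $r_k = F_{i_k}(x_k) - F_{i_k}(x^\dag)$, which removes the noise terms present in (\ref{Land.11}). The tangential cone condition (iii) of Assumption~\ref{ass2} for the $i_k$-th component, with $\tilde x = x^\dag$ and $x = x_k$, reads $\|F_{i_k}'(x_k)(x_k - x^\dag) - r_k\| \le \eta\|r_k\|$, so that
\begin{align*}
\l r_k, F_{i_k}'(x_k)(x_k - x^\dag)\r
&= \|r_k\|^2 + \l r_k, F_{i_k}'(x_k)(x_k - x^\dag) - r_k\r \\
&\ge (1-\eta)\|r_k\|^2.
\end{align*}
Substituting both bounds and using $\gamma_k \le \bar\gamma$, I would arrive at
\begin{align*}
\Delta_{k+1} - \Delta_k
&\le -\gamma_k\left(1 - \eta - \frac{\gamma_k L^2}{4\sigma}\right)\|r_k\|^2 \\
&\le -c_6 \gamma_k \|r_k\|^2,
\end{align*}
which is the claimed inequality.

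To close the induction, the base case $k=0$ is trivial. Assuming $x_l \in B_{2\rho}(x_0)$, the one-step estimate applies and yields $\Delta_{l+1} \le \Delta_l$; iterating down to the start gives $\Delta_{l+1} \le \Delta_0 \le \sigma\rho^2$. The bound (\ref{Land.24}) then forces $\sigma\|x_{l+1} - x^\dag\|^2 \le \Delta_{l+1} \le \sigma\rho^2$, so $\|x_{l+1} - x^\dag\| \le \rho$, and combining with $\|x^\dag - x_0\| \le \rho$ via the triangle inequality yields $x_{l+1} \in B_{2\rho}(x_0)$, completing the induction. I do not anticipate a genuine obstacle: the argument is essentially the exact-data, component-wise specialization of Lemma~\ref{Land:lem1}. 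The only points demanding care are that the uniform bound $L$ (the maximum over all components) and $\bar\gamma = \sup_k \gamma_k$ must be used because the active index $i_k$ varies with $k$, and that the absence of noise makes $\{\Delta_k\}$ genuinely monotone, so that, in contrast to Lemma~\ref{Land:lem1}, no additive $\d^2$ term needs to be tracked in the induction.
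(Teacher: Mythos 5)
Your proof is correct and follows essentially the same route as the paper: the paper's proof of this lemma likewise combines the identity (\ref{Land.20}) and the bound (\ref{Land.27}) with the tangential cone condition and $\|F_{i_k}'(x_k)\|\le L$ to get the one-step decrease, then closes by the same induction as in Lemma \ref{Land:lem1}, using $\Delta_0\le\sigma\rho^2$ and (\ref{Land.24}) to keep the iterates in $B_{2\rho}(x_0)$. Your added observations (the statement is pathwise, and exact data makes $\{\Delta_k\}$ monotone so no $\d^2$ term is tracked) are accurate and consistent with the paper's argument.
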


\begin{proof}
When $x_k \in B_{2\rho}(x_0)$, we may use the similar argument in the proof of Lemma \ref{Land:lem1} 
to obtain 
\begin{align*}
\Delta_{k+1} - \Delta_k 
& \le \frac{1}{4\sigma} \|\xi_{k+1} - \xi_k\|^2 + \l \xi_{k+1} - \xi_k, x_k - x^\dag\r \\
& = \frac{\gamma_k^2}{4\sigma} \|F_{i_k}'(x_k)^*(F_{i_k}(x_k) - y_{i_k})\|^2 
- \gamma_k \left\l F_{i_k}(x_k) - y_{i_k}, F_{i_k}'(x_k) (x_k - x^\dag)\right\r \\
& \le \frac{\gamma_k^2}{4\sigma} \|F_{i_k}'(x_k)^*(F_{i_k}(x_k) - y_{i_k})\|^2 
- (1-\eta) \gamma_k \|F_{i_k}(x_k) - y_{i_k}\|^2. 
\end{align*}
By using $\|F_{i_k}'(x_k)\| \le L$, we further obtain 
\begin{align*}
\Delta_{k+1} - \Delta_k 
& \le - \left(1-\eta - \frac{\gamma_k L^2}{4 \sigma}\right) \gamma_k \|F_{i_k}(x_k) - y_{i_k}\|^2 
\le - c_6 \gamma_k \|F_{i_k}(x_k) - y_{i_k}\|^2.
\end{align*}
Based on this, we may use an induction argument, as done in the proof of Lemma \ref{Land:lem1}, 
to conclude the proof.  
\end{proof}

Note that, once $(x_0, \xi_0) \in X\times X^*$ and $\{\gamma_k\}$ are fixed, the sequence 
$\{(x_k, \xi_k)\}$ in Algorithm \ref{alg:SMD} is completely determined by the sample path 
$\{i_0, i_1, \cdots\}$; changing the sample path can result in a different iterative sequence 
and thus $\{(x_k, \xi_k)\}$ is a random sequence. We need to analyze Algorithm \ref{alg:SMD} 
using tools from stochastic calculus. Let $\F_0 = \emptyset$ and, for each integer $k \ge 0$, 
let $\F_k$ denote the $\sigma$-algebra generated by the random variables $i_l$ for $0\le l < k$. 
Then $\{\F_k: k\ge 0\}$ form a filtration which is natural to Algorithm \ref{alg:SMD}. Let $\EE$ 
denote the expectation associated with this filtration, see \cite{B2020}. There holds the tower property 
$\EE[\EE[\Phi|\F_k]] = \EE[\Phi]$ for any random variable $\Phi$. 

Based on Lemma \ref{Land:lem6}, it is easy to derive the convergence of $\{x_k\}$ to a solution 
of (\ref{sys.min}) in expectation and almost surely by the arguments in \cite{JLZ2023} with minor 
modifications. Our focus here is on deriving convergence rate of Algorithm \ref{alg:SMD} under 
the source condition (\ref{sc0}). To this end, we need to interpret Algorithm \ref{alg:SMD} from 
an alternative perspective. Note that, under (iv) of Assumption \ref{ass2}, the updating formula 
for $\xi_{k+1}$ can be written as 
$$
\xi_{k+1} = \xi_k - \gamma_k A_{i_k}^* (Q_{x_k}^{i_k})^* (F_{i_k}(x_k) - y_{i_k}).
$$
This motivates us to consider the following algorithm.

\begin{algorithm}\label{alg:SMD2}
Take $(x_0, \xi_0) \in X\times X^*$ with $x_0 \in \emph{dom}(F)$ and $\xi_0 \in \p \R(x_0)$. Set 
$\la_0 = (0, \cdots, 0) \in Y_1 \times \cdots \times Y_N$. For $k \ge 0$ do the following:
\begin{enumerate}[leftmargin = 0.9cm]
\item[\emph{(i)}] Pick $i_k \in \{1, \cdots, N\}$ randomly via the uniform distribution;

\item[\emph{(ii)}] Update $\la_{k+1} = (\la_{k+1,1}, \cdots, \la_{k+1,N}) \in Y_1 \times \cdots \times Y_N$ by 
$\la_{k+1, i} = \la_{k, i}$ for $i \ne i_k$ and 
$$
\la_{k+1, i_k} = \la_{k, i_k} - \gamma_k (Q_{x_k}^{i_k})^*(F_{i_k}(x_k) - y_{i_k});
$$

\item[\emph{(iii)}] Calculate $\xi_{k+1} = \xi_0 + A^* \la_{k+1}$ and 
$x_{k+1} = \arg\min_{x\in X} \left\{\R(x) - \l \xi_{k+1}, x\r\right\}$.
\end{enumerate}
\end{algorithm}

Note that Algorithm \ref{alg:SMD2} is not an implementable one, it is only used to produce an extra 
sequence $\{\la_k\}$ in $Y_1\times \cdots\times Y_N$ which is crucial for the forthcoming 
analysis. Note that for $\xi_{k+1}$ defined by algorithm \ref{alg:SMD2} we have 
\begin{align*}
\xi_{k+1} & = \xi_0 + A^* \la_{k+1} = \xi_0 + \sum_{i\ne i_k} A_i^* \la_{k+1, i} + A_{i_k}^* \la_{k+1,i_k}\\
& = \xi_0 +  \sum_{i\ne i_k} A_i^* \la_{k,i} 
+ A_{i_k}^* \left(\la_{k,i_k} - \gamma_k (Q_{x_k}^{i_k})^* (F_{i_k}(x_k) - y_{i_k})\right) \\
& = \xi_k - \gamma_k A_{i_k}^* (Q_{x_k}^{i_k})^* (F_{i_k}(x_k) - y_{i_k}) \\
& = \xi_k - \gamma_k F_{i_k}'(x_k)^* (F_{i_k}(x_k) - y_{i_k}).
\end{align*}
Therefore both Algorithm \ref{alg:SMD} and Algorithm \ref{alg:SMD2} can produce the same 
sequence $\{x_k, \xi_k\}$ if they start from the same initial guess, follow the same sample 
path and use the same step-size. 

\begin{lemma}\label{Land:lem7}
Let Assumption \ref{ass0} and Assumption \ref{ass2} hold. Let $\{\gamma_k\}$ satisfy (\ref{sys.2}). 
Consider Algorithm \ref{alg:SMD2} and assume the source condition (\ref{sc1}) holds. Then
\begin{align*}
\EE[\|\la_{k+1} - \la^\dag\|^2|\F_k] 
& \le  \left(1 + \frac{(3+\eta)^2\kappa_0^2}{8\sigma N} \gamma_k \|F(x_k) - y\|^2 \right)\|\la_k - \la^\dag\|^2 \\
& \quad \, + \frac{C \gamma_k^2}{N} \left\|F(x_k) - y\right\|^2 - \frac{2\gamma_k}{N} \Delta_k
\end{align*}
for all integers $k \ge 0$, where $\Delta_k := D_\R^{\xi_k}(x^\dag, x_k)$ and $C$ is a positive constant 
independent of $k$. 
\end{lemma}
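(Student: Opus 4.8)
The plan is to mimic the deterministic computation that produced Lemma~\ref{Land:lem2}, but carry out the conditional expectation $\EE[\,\cdot\,|\F_k]$ over the random index $i_k$ at the outset. Working with Algorithm~\ref{alg:SMD2}, the key observation is that only the $i_k$-th block of $\la$ changes, so $\|\la_{k+1}-\la^\dag\|^2 - \|\la_k - \la^\dag\|^2$ equals the single-block increment
\[
\gamma_k^2 \|(Q_{x_k}^{i_k})^*(F_{i_k}(x_k)-y_{i_k})\|^2 - 2\gamma_k \l (Q_{x_k}^{i_k})^*(F_{i_k}(x_k)-y_{i_k}), \la_{k,i_k}-\la_{i_k}^\dag\r.
\]
First I would take $\EE[\,\cdot\,|\F_k]$; since $i_k$ is uniform on $\{1,\dots,N\}$ and $x_k,\la_k$ are $\F_k$-measurable, each term averages to $\tfrac1N$ times the corresponding sum over $i=1,\dots,N$. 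The cross term then reassembles, via $A^*z = \sum_i A_i^* z_i$ and $\xi_k - \xi^\dag = A^*(\la_k - \la^\dag)$, into the clean inner product $-\tfrac{2\gamma_k}{N}\l \xi_k - \xi^\dag, x_k - x^\dag\r$, which by the source condition (\ref{sc1}) and strong convexity of $\R$ is bounded above by $-\tfrac{2\gamma_k}{N}(\Delta_k + \sigma\|x_k-x^\dag\|^2)$, exactly as in Lemma~\ref{Land:lem2}.

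Next I would control the three error contributions that arise when splitting the inner product, reusing the estimates already established before Lemma~\ref{Land:lem2}. Writing $(Q_{x_k}^{i_k})^* = I + ((Q_{x_k}^{i_k})^*-I)$ and $F_{i_k}(x_k)-y_{i_k} = A_i(x_k-x^\dag) + (F_{i_k}(x_k)-y_{i_k}-A_i(x_k-x^\dag))$, the deviation of $Q^i$ from the identity is bounded by $\|I-(Q_{x_k}^i)^*\|\le \kappa_0\|x_k-x^\dag\|$ from (iv) of Assumption~\ref{ass2}, and the linearization error $\|F_i(x_k)-y_i - A_i(x_k-x^\dag)\|\le \tfrac12(1+\eta)\kappa_0\|x_k-x^\dag\|\,\|F_i(x_k)-y_i\|$ follows by the same integral-mean-value argument used in the excerpt (here with exact data, so there is no $\d$ term). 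Summing over $i$ and using the uniform bound $\|(Q_{x_k}^i)^*\|^2\le c_2$ together with Lemma~\ref{Land:lem6} (which guarantees $x_k\in B_{2\rho}(x_0)$ and hence $\|x_k-x^\dag\|$ bounded) turns the quadratic term into $\tfrac1N$ times $\sum_i \gamma_k^2\|F_i(x_k)-y_i\|^2 = \tfrac{\gamma_k^2}{N}\|F(x_k)-y\|^2$, and converts the mixed $\kappa_0$-terms into a multiple of $\gamma_k\,\|x_k-x^\dag\|\,\|F(x_k)-y\|\,\|\la_k-\la^\dag\|$.

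I would then absorb the $\|x_k-x^\dag\|$-containing cross term by Young's inequality exactly as in the excerpt: bounding $(3+\eta)\kappa_0\|x_k-x^\dag\|\,\|F(x_k)-y\|\,\|\la_k-\la^\dag\|$ by $2\sigma\|x_k-x^\dag\|^2 + \tfrac{(3+\eta)^2\kappa_0^2}{8\sigma}\|F(x_k)-y\|^2\|\la_k-\la^\dag\|^2$, after the $\tfrac1N$ normalization. The $2\sigma\|x_k-x^\dag\|^2$ piece cancels against the $-\tfrac{2\sigma\gamma_k}{N}\|x_k-x^\dag\|^2$ coming from strong convexity, leaving precisely the multiplicative factor $1+\tfrac{(3+\eta)^2\kappa_0^2}{8\sigma N}\gamma_k\|F(x_k)-y\|^2$ on $\|\la_k-\la^\dag\|^2$, the $-\tfrac{2\gamma_k}{N}\Delta_k$ term, and a residual $\tfrac{\gamma_k^2}{N}\|F(x_k)-y\|^2$ multiplied by an absolute constant, which I would name $C$. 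The main obstacle I anticipate is purely bookkeeping rather than conceptual: one must be careful that the conditional expectation correctly distributes the factor $\tfrac1N$ across \emph{every} term (including the error terms, where the per-index estimates must be summed before normalization), and that each per-index bound is phrased so that $\sum_i\|F_i(x_k)-y_i\|^2$ collapses to $\|F(x_k)-y\|^2$ by definition of the product-space norm; getting the powers of $\gamma_k$ and the placement of $N$ consistent throughout is where an error could most easily slip in.
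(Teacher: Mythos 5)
Your proposal is correct and follows essentially the same route as the paper's proof: the single-block polarization identity, the per-index estimates inherited from the computation preceding Lemma \ref{Land:lem2} (with the $(3+\eta)\kappa_0$ aggregation and the exact-data linearization bound), Cauchy--Schwarz across the blocks to reassemble $\|F(x_k)-y\|\,\|\la_k-\la^\dag\|$, the source-condition identity $\l A^*(\la_k-\la^\dag), x_k-x^\dag\r \ge \Delta_k + \sigma\|x_k-x^\dag\|^2$, and the same Young-inequality absorption of the mixed term. The only difference --- you take $\EE[\,\cdot\,|\F_k]$ before estimating while the paper bounds pathwise and then averages --- is immaterial, since all the per-index bounds are deterministic given $\F_k$.
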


\begin{proof}
By the definition of $\la_{k+1}$ and the polarization identity we have 
\begin{align*}
\|\la_{k+1} - \la^\dag\|^2 
& = \sum_{i\ne i_k} \|\la_{k+1, i} - \la_i^\dag\|^2 + \|\la_{k+1, i_k} - \la_{i_k}^\dag\|^2 \\
& = \sum_{i\ne i_k} \|\la_{k, i} - \la_i^\dag\|^2 
+ \| \la_{k, i_k} - \la_{i_k}^\dag - \gamma_k (Q_{x_k}^{i_k})^* (F_{i_k}(x_k) - y_{i_k})\|^2 \displaybreak[0]\\
& = \|\la_k - \la^\dag\|^2 + \gamma_k^2 \left\|(Q_{x_k}^{i_k})^* (F_{i_k}(x_k) - y_{i_k})\right\|^2 \\
& \quad \, - 2 \gamma_k \left\l (Q_{x_k}^{i_k})^* (F_{i_k}(x_k) - y_{i_k}), \la_{k,i_k} - \la_{i_k}^\dag\right\r.
\end{align*}
By using Assumption \ref{ass2} and following the similar argument in establishing Lemma \ref{Land:lem2} 
we can obtain 
\begin{align*}
\|\la_{k+1} - \la^\dag\|^2 
& \le  \|\la_k - \la^\dag\|^2 + \gamma_k^2 (1 + \kappa_0 \|x_k - x^\dag\|)^2 \left\|F_{i_k}(x_k) - y_{i_k}\right\|^2 \\
& \quad \, + 2 \kappa_0 \gamma_k \|x_k - x^\dag\| \|F_{i_k}(x_k) - y_{i_k}\| \|\la_{k,i_k} - \la_{i_k}^\dag\| \\
& \quad \, - 2 \gamma_k \left\l F_{i_k}(x_k) - y_{i_k} - A_{i_k} (x_k - x^\dag), \la_{k,i_k} - \la_{i_k}^\dag\right\r \\
& \quad \, - 2 \gamma_k \left\l A_{i_k} (x_k - x^\dag), \la_{k, i_k} - \la_{i_k}^\dag\right\r \\
& \le  \|\la_k - \la^\dag\|^2 + \gamma_k^2 (1 + \kappa_0 \|x_k - x^\dag\|)^2 \left\|F_{i_k}(x_k) - y_{i_k}\right\|^2 \\
& \quad \, + (3+\eta) \kappa_0 \gamma_k \|x_k - x^\dag\| \|F_{i_k}(x_k) - y_{i_k}\| \|\la_{k,i_k} - \la_{i_k}^\dag\| \\
& \quad \, - 2 \gamma_k \left\l A_{i_k}^*(\la_{k, i_k} - \la_{i_k}^\dag), x_k - x^\dag\right\r.
\end{align*}
According to Lemma \ref{Land:lem6} and the strong convexity of $\R$, $\{x_k\}$ is bounded. Thus, we can 
find a positive constant $C$ such that 
\begin{align*}
\|\la_{k+1} - \la^\dag\|^2 
& \le  \|\la_k - \la^\dag\|^2 + C \gamma_k^2 \left\|F_{i_k}(x_k) - y_{i_k}\right\|^2 \\
& \quad \, + (3+\eta) \kappa_0 \gamma_k \|x_k - x^\dag\| \|F_{i_k}(x_k) - y_{i_k}\| \|\la_{k,i_k} - \la_{i_k}^\dag\| \\
& \quad \, - 2 \gamma_k \left\l A_{i_k}^*(\la_{k, i_k} - \la_{i_k}^\dag), x_k - x^\dag\right\r.
\end{align*}
Taking the conditional expectation on $\F_k$ gives 
\begin{align*}
\EE[\|\la_{k+1} - \la^\dag\|^2|\F_k] 
& \le  \|\la_k - \la^\dag\|^2 + \frac{C \gamma_k^2}{N} \sum_{i=1}^N \left\|F_i(x_k) - y_i\right\|^2 \\
& \quad \, + \frac{(3+\eta)\kappa_0}{N} \gamma_k \|x_k - x^\dag\| \sum_{i=1}^N \|F_i(x_k) - y_i\| \|\la_{k,i} - \la_i^\dag\| \\
& \quad \, - \frac{2 \gamma_k}{N} \left\l \sum_{i=1}^N A_i^*(\la_{k, i} - \la_i^\dag), x_k - x^\dag\right\r.
\end{align*}
By the Cauchy-Schwarz inequality, we have 
\begin{align*} 
\sum_{i=1}^N \|F_i(x_k) - y_i\| \|\la_{k,i} - \la_i^\dag\| 
& \le \left(\sum_{i=1}^N \|F_i(x_k)-y_i\|^2\right)^{1/2} \left(\sum_{i=1}^N \|\la_{k,i}- \la_i^\dag\|^2\right)^{1/2}\\
& = \|F(x_k) - y\| \|\la_k - \la^\dag\|. 
\end{align*}
Combining this with the above inequality and using the definition of $F$ and $A$, we thus obtain 
\begin{align*}
\EE[\|\la_{k+1} - \la^\dag\|^2|\F_k] 
& \le  \|\la_k - \la^\dag\|^2 + \frac{C \gamma_k^2}{N}  \left\|F(x_k) - y\right\|^2 \\
& \quad \, + \frac{(3+\eta)\kappa_0}{N} \gamma_k \|x_k - x^\dag\| \|F(x_k) - y\| \|\la_k - \la^\dag\| \\
& \quad \, - \frac{2 \gamma_k}{N} \left\l A^*(\la_k - \la^\dag), x_k - x^\dag\right\r.
\end{align*}
By using the definition of $\xi_k$, the source condition (\ref{sc1}), and the strong convexity 
of $\R$, we have 
$$
\left\l A^*(\la_k - \la^\dag), x_k - x^\dag\right\r
= \l \xi_k - \xi^\dag, x_k - x^\dag\r 
\ge \Delta_k + \sigma \|x_k - x^\dag\|^2. 
$$
Therefore 
\begin{align*}
\EE[\|\la_{k+1} - \la^\dag\|^2|\F_k] 
& \le  \|\la_k - \la^\dag\|^2 + \frac{C \gamma_k^2}{N}  \left\|F(x_k) - y\right\|^2 
- \frac{2 \sigma \gamma_k}{N} \|x_k - x^\dag\|^2 - \frac{2\gamma_k}{N} \Delta_k \\
& \quad \, + \frac{(3+\eta)\kappa_0}{N} \gamma_k \|x_k - x^\dag\| \|F(x_k) - y\| \|\la_k - \la^\dag\| \\
& \le  \|\la_k - \la^\dag\|^2 + \frac{C \gamma_k^2}{N}  \left\|F(x_k) - y\right\|^2 - \frac{2\gamma_k}{N} \Delta_k\\
& \quad \, + \frac{(3+\eta)^2 \kappa_0^2}{8\sigma N} \gamma_k \|F(x_k) - y\|^2 \|\la_k - \la^\dag\|^2. 
\end{align*}
The proof is complete. 
\end{proof}

We will use Lemma \ref{Land:lem6} and Lemma \ref{Land:lem7} to derive the almost sure convergence rate 
of Algorithm \ref{alg:SMD}. To this end, we need the following Robbins-Siegmund theorem (\cite{B2016,RS1971}).

\begin{theorem}[Robbins-Siegmund theorem] \label{theorem_RS}
In a probability space consider a filtration $\{\F_k\}$ and four non-negative 
sequences of $\{\F_k\}$-adapted processes $\{V_k\}$, $\{U_k\}$, $\{Z_k\}$ and $\{\a_k\}$ such that 
$\sum_k Z_k < \infty$ and $\sum_k \alpha_k < \infty$ almost surely. If for any integers 
$k\ge 0$, there holds 
\begin{equation*}
\EE[V_{k+1} \mid \F_k] + U_k \leq (1+\a_k) V_k + Z_k,
\end{equation*}
then $\{V_k\}$ converges and $\sum_{k=0}^{\infty} U_k$ is finite almost surely.
\end{theorem}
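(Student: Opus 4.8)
The plan is to reduce the statement to Doob's supermartingale convergence theorem by absorbing the multiplicative perturbation $\a_k$ into a predictable discount factor and the additive term $Z_k$ into the martingale bookkeeping. First I would set $\theta_0 := 1$ and $\theta_k := \prod_{j=0}^{k-1}(1+\a_j)^{-1}$ for $k\ge 1$. Since each $\a_j$ is $\F_j$-measurable, $\theta_k$ is $\F_k$-measurable and, crucially, $\theta_{k+1}=\theta_k/(1+\a_k)$ is already $\F_k$-measurable. Because $\log(1+\a_j)\le \a_j$ and $\sum_j\a_j<\infty$ almost surely, the sequence $\{\theta_k\}$ is nonincreasing and converges almost surely to a strictly positive limit $\theta_\infty=\exp\!\big(-\sum_j\log(1+\a_j)\big)>0$; this positivity is used only at the very end.

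Next I would multiply the hypothesis $\EE[V_{k+1}\mid\F_k]\le(1+\a_k)V_k+Z_k-U_k$ by the nonnegative $\F_k$-measurable factor $\theta_{k+1}$, obtaining $\EE[\theta_{k+1}V_{k+1}\mid\F_k]\le \theta_kV_k+\theta_{k+1}Z_k-\theta_{k+1}U_k$ (here one uses that $\theta_{k+1}$ can be pulled inside the conditional expectation). Defining
\[
S_k := \theta_k V_k + \sum_{j=0}^{k-1}\theta_{j+1}(U_j-Z_j),
\]
a short telescoping computation then gives $\EE[S_{k+1}\mid\F_k]\le S_k$, so $\{S_k\}$ is a supermartingale (integrability of $S_k$ being inherited from that of the four given processes, using $\theta_{j+1}\le 1$). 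The difficulty is that $S_k$ is only bounded below by $-\sum_{j=0}^{k-1}\theta_{j+1}Z_j\ge-\sum_jZ_j$, a random rather than constant quantity, so Doob's theorem does not apply directly. This random lower bound is the main obstacle of the argument.

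To overcome it I would localize. For each integer $m\ge 1$ set the stopping time $\tau_m := \inf\big\{k\ge 0:\sum_{j=0}^{k}\theta_{j+1}Z_j>m\big\}$. The stopped process $\{S_{k\wedge\tau_m}\}$ is again a supermartingale, and by construction $S_{k\wedge\tau_m}\ge -m$, whence $\EE[(S_{k\wedge\tau_m})^-]\le m$ uniformly in $k$; Doob's supermartingale convergence theorem then yields that $S_{k\wedge\tau_m}$ converges almost surely to a finite limit. Since $\sum_jZ_j<\infty$ almost surely, the events $\{\tau_m=\infty\}$ increase to a set of full measure, and on $\{\tau_m=\infty\}$ one has $S_{k\wedge\tau_m}=S_k$; letting $m\to\infty$ shows that $\{S_k\}$ converges almost surely.

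Finally I would unwind the definitions. As $R_k:=\sum_{j=0}^{k-1}\theta_{j+1}Z_j$ is nondecreasing and bounded by $\sum_jZ_j<\infty$, it converges, so $W_k:=\theta_kV_k+\sum_{j=0}^{k-1}\theta_{j+1}U_j=S_k+R_k$ converges almost surely. The partial sums $T_k:=\sum_{j=0}^{k-1}\theta_{j+1}U_j$ are nondecreasing and dominated by the convergent $W_k$, hence $T_k\to T_\infty<\infty$; since $\theta_{j+1}\ge\theta_\infty>0$, this forces $\sum_jU_j<\infty$ almost surely. Then $\theta_kV_k=W_k-T_k$ converges, and dividing by $\theta_k\to\theta_\infty>0$ shows that $V_k$ converges almost surely to a finite limit, completing the argument.
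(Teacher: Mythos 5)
The paper itself offers no proof of this theorem: it is quoted from the literature, with \cite{RS1971} and \cite{B2016} cited as sources. Your argument is correct, and it is essentially the classical proof found in those references: discount by $\theta_k=\prod_{j<k}(1+\a_j)^{-1}$ to absorb the multiplicative perturbation (noting $\theta_{k+1}$ is $\F_k$-measurable, so it can be pulled inside the conditional expectation), fold the accumulated $U$- and $Z$-terms into $S_k$ so that the telescoping gives $\EE[S_{k+1}\mid\F_k]\le S_k$, localize with the stopping times $\tau_m$ to secure the uniform lower bound $S_{k\wedge\tau_m}\ge -m$ required by Doob's supermartingale convergence theorem, patch the localizations together via $\bigcup_m\{\tau_m=\infty\}$ having full measure, and unwind using the monotone convergence of $R_k$ and $T_k$ together with $\theta_k\to\theta_\infty>0$ (which follows from $\sum_j\log(1+\a_j)\le\sum_j\a_j<\infty$ a.s.). The one point worth flagging is integrability: the statement does not explicitly assume $V_k,U_k,Z_k\in L^1$, and your remark that integrability of $S_k$ is ``inherited from the four given processes'' presupposes it. This is the standard reading (and is what the cited sources assume, and all the paper's applications have bounded processes); alternatively, since everything is nonnegative, one can interpret all conditional expectations in the generalized sense and apply the convergence theorem for nonnegative, not necessarily integrable, supermartingales to $S_{k\wedge\tau_m}+m$ --- your localization already supplies exactly the nonnegativity needed, so this is a one-line repair rather than a gap.
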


Now we are ready to show the main convergence rate result on Algorithm \ref{alg:SMD} under the 
source condition (\ref{sc1}). 

\begin{theorem}\label{Land:thm4}
Let Assumption \ref{ass0} and Assumption \ref{ass2} hold. Let $\{\gamma_k\}$ be a sequence of 
positive numbers such that 
\begin{align}\label{gamma4}
\bar \gamma : = \sup_{k\ge 0} \gamma_k < \frac{4\sigma (1-\eta)}{L^2} \quad \emph{ and } \quad 
\sum_{k=0}^\infty \gamma_k = \infty. 
\end{align}
Consider Algorithm \ref{alg:SMD} and assume the source condition (\ref{sc1}) holds. Then 
\begin{align*}
\Delta_k = O\left(s_k^{-1} \right) \quad \mbox{ and } \quad 
\|x_k - x^\dag\| = O\left(s_k^{-1/2}\right) \quad \emph{almost surely},
\end{align*}
where $s_k := \sum_{l=0}^k\gamma_l$. 
\end{theorem}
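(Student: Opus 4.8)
The plan is to mirror the deterministic strategy of Section~\ref{sect3}, working with the auxiliary sequence $\{\la_k\}$ produced by Algorithm~\ref{alg:SMD2} (which generates the same $\{(x_k,\xi_k)\}$ as Algorithm~\ref{alg:SMD}), but replacing the summation-plus-comparison argument by two successive applications of the Robbins--Siegmund theorem. The target is the almost sure summability
\[
\sum_{k=0}^\infty \gamma_k \Delta_k < \infty,
\]
which I then convert into the stated rate by exploiting the \emph{pathwise} monotonicity of $\{\Delta_k\}$. Indeed, Lemma~\ref{Land:lem6} gives $\Delta_{k+1}\le \Delta_k$ along every realization, so $\Delta_l \ge \Delta_k$ for all $l\le k$ and hence
\[
s_k \Delta_k = \Delta_k \sum_{l=0}^k \gamma_l \le \sum_{l=0}^k \gamma_l \Delta_l \le \sum_{l=0}^\infty \gamma_l \Delta_l <\infty \quad \text{a.s.},
\]
which is precisely $\Delta_k = O(s_k^{-1})$; the strong convexity of $\R$ then gives $\|x_k-x^\dag\|^2 \le \Delta_k/\sigma = O(s_k^{-1})$, i.e.\ the second assertion. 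Thus the entire difficulty is concentrated in the summability claim.

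First I would extract a preliminary summability from Lemma~\ref{Land:lem6}. Taking the conditional expectation on $\F_k$ and using $\EE[\|F_{i_k}(x_k)-y_{i_k}\|^2\mid\F_k] = \tfrac1N\|F(x_k)-y\|^2$, one gets
\[
\EE[\Delta_{k+1}\mid \F_k] + \frac{c_6\gamma_k}{N}\|F(x_k)-y\|^2 \le \Delta_k .
\]
Applying Theorem~\ref{theorem_RS} with $V_k=\Delta_k$, $U_k=\tfrac{c_6\gamma_k}{N}\|F(x_k)-y\|^2$ and $\a_k=Z_k=0$ shows that $\{\Delta_k\}$ converges and, more importantly, that $\sum_k \gamma_k\|F(x_k)-y\|^2<\infty$ almost surely.

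The main work is the second application of Robbins--Siegmund, to the recursion of Lemma~\ref{Land:lem7}, which I would rewrite as
\[
\EE[\|\la_{k+1}-\la^\dag\|^2\mid\F_k] + \frac{2\gamma_k}{N}\Delta_k \le (1+\a_k)\|\la_k-\la^\dag\|^2 + Z_k,
\]
with $\a_k = \tfrac{(3+\eta)^2\kappa_0^2}{8\sigma N}\gamma_k\|F(x_k)-y\|^2$ and $Z_k=\tfrac{C\gamma_k^2}{N}\|F(x_k)-y\|^2$. The role of the preliminary step is exactly to supply the hypotheses of Theorem~\ref{theorem_RS}: since $\a_k$ is a constant multiple of $\gamma_k\|F(x_k)-y\|^2$ and $Z_k \le \tfrac{C\bar\gamma}{N}\gamma_k\|F(x_k)-y\|^2$, both $\sum_k\a_k<\infty$ and $\sum_k Z_k<\infty$ hold almost surely. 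Theorem~\ref{theorem_RS} with $V_k=\|\la_k-\la^\dag\|^2$ and $U_k=\tfrac{2\gamma_k}{N}\Delta_k$ then yields $\sum_k \gamma_k\Delta_k<\infty$ almost surely, and combining with the comparison displayed above on the common almost-sure event completes the argument.

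The step I expect to be delicate is verifying $\sum_k\a_k<\infty$ and $\sum_k Z_k<\infty$ before invoking Robbins--Siegmund on the $\la$-recursion; this is where the exact-data assumption enters, and it is essential that the first application delivers the stronger conclusion $\sum_k\gamma_k\|F(x_k)-y\|^2<\infty$ rather than mere convergence of $\{\Delta_k\}$. By contrast, the passage from $\sum_k\gamma_k\Delta_k<\infty$ to the rate $O(s_k^{-1})$ is the clean conceptual key and is immediate once the pathwise monotonicity of Lemma~\ref{Land:lem6} is invoked: no Tauberian or further stochastic machinery is needed there, since $\{\Delta_k\}$ decreases along every sample path.
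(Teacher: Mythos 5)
Your proposal is correct and follows essentially the same route as the paper's proof: derive $\sum_k \gamma_k\|F(x_k)-y\|^2<\infty$ almost surely from Lemma \ref{Land:lem6}, apply the Robbins--Siegmund theorem to the recursion of Lemma \ref{Land:lem7} (with exactly your choices of $V_k$, $U_k$, $\a_k$, $Z_k$) to conclude $\sum_k\gamma_k\Delta_k<\infty$ almost surely, and then convert to the rate via the pathwise monotonicity of $\{\Delta_k\}$ and strong convexity of $\R$. The only cosmetic deviation is that the paper obtains the preliminary summability by taking full expectations and summing the telescoping inequality, whereas you get it through a first, degenerate application of Robbins--Siegmund with $\a_k=Z_k=0$; both are valid and deliver the same intermediate fact.
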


\begin{proof}
According to Lemma \ref{Land:lem6}, taking the conditional expectation on $\F_k$ gives
$$
\EE[\Delta_{k+1}|\F_k] \le \Delta_k - \frac{c_6 \gamma_k}{N} \|F(x_k) - y\|^2.  
$$
By taking the full expectation, we further obtain 
\begin{align*}
\frac{c_6 \gamma_k}{N} \EE[\|F(x_k) - y\|^2] \le \EE[\Delta_k] - \EE[\Delta_{k+1}].
\end{align*}
Consequently 
\begin{align*}
\frac{c_6}{N} \EE\left[\sum_{k=0}^\infty \gamma_k \|F(x_k) - y\|^2\right] 
\le \sum_{k=0}^\infty \left(\EE[\Delta_k] - \EE[\Delta_{k+1}]\right) 
\le \Delta_0 < \infty.
\end{align*}
This implies that $\sum_{k=0}^\infty \gamma_k \|F(x_k) - y\|^2 < \infty$ almost surely. 
Therefore, we may apply Robbins-Siegmund theorem to the inequality in Lemma \ref{Land:lem7}
to conclude that $\sum_{k=0}^\infty \gamma_k \Delta_k < \infty$ almost surely. By using 
the monotonicity of $\{\Delta_k\}$ given in Lemma \ref{Land:lem6} we obtain 
\begin{align*}
s_k \Delta_k = \left(\sum_{l=0}^k\gamma_l\right) \Delta_k 
& \le \sum_{l=0}^k \gamma_l \Delta_l \le \sum_{l=0}^\infty \gamma_l \Delta_l < \infty 
\quad \mbox{almost surely}.
\end{align*}
Therefore 
\begin{align*}
\Delta_k = O\left(s_k^{-1}\right) \quad \mbox{almost surely}.
\end{align*}
By the strong convexity of $\R$, we then obtain $\|x_k - x^\dag\| = O(s_k^{-1/2})$ almost 
surely. The proof is complete. 
\end{proof}

\begin{remark}
{\rm
When $X$ is a Hilbert space and $\R(x) = \frac{1}{2} \|x\|^2$, Algorithm \ref{alg:SMD} reduces 
to the stochastic gradient method studied in \cite{JZZ2020}. Under the diminishing step-size 
$\gamma_k = \gamma_0(k+1)^{-\a}$ with $\a\in (0,1)$, \cite[Theorem 4.8]{JZZ2020} shows that, 
under the source condition (\ref{sc1}), there holds the convergence rate
$$
\EE[\|x_k - x^\dag\|^2] = O\left((k+1)^{-\min\{1-\a, \a-\varepsilon\}}\right) 
$$
for some small $\varepsilon\in (0, \a/2)$. Besides Assumption \ref{ass2} and smallness of $\|\la^\dag\|$
and $\gamma_0$, the proof in \cite{JZZ2020} also relies on the technical condition
\begin{align*}
\EE\left[\|(I - R_{z_t}) G(x_k)\|^2\right]^{1/2} 
&\le C \EE\left[\|x_k-x^\dag\|^2\right]^{\theta/2}\EE\left[\|G(x_k)\|^2\right]^{1/2}, \\
\EE\left[\|(I - R_{z_t}^*) G(x_k)\|^2\right]^{1/2} 
&\le C \EE\left[\|x_k-x^\dag\|^2\right]^{\theta/2}\EE\left[\|G(x_k)\|^2\right]^{1/2}
\end{align*}
for some $\theta\in (0, 1]$; one may refer to \cite[Assumption 2.4]{JZZ2020} for the meaning of 
$R_{z_t}$ and $G(x)$. 
Unfortunately there is no any justification available for this technical condition. In contrast,
our result requires neither such a technical condition nor the smallness of $\|\la^\dag\|$. 
Our convergence rate is valid for any step-size $\{\gamma_k\}$ satisfying (\ref{gamma4}). 
For the step-size scheme $\gamma_k = \gamma_0 (k+1)^{-\a}$ with $\a \in (0,1)$, our Theorem 
\ref{Land:thm4} gives the nicer almost sure convergence rate 
$$
\|x_k- x^\dag\|^2 = O\left((k+1)^{-(1-\a)}\right).
$$
Our result even allows to use constant step-size schemes to give much better almost sure convergence rate
$$
\|x_k- x^\dag\|^2 = O\left(1/(k+1)\right).
$$
}
\end{remark}

\begin{remark}
{\rm For the stochastic mirror descent method for nonlinear ill-posed system with noisy data, it is not yet clear how to derive convergence rates. We hope to address this issue in a future work. }
\end{remark}

\section{\bf Numerical results}\label{sect5}

In this section we present some computational results to validate our theoretical convergence rate 
results on the method (\ref{Land}) when it is terminated by the discrepancy principle (\ref{DP0}) with 
$\tau > (1+\eta)/(1-\eta)$. The step-sizes $\gamma_k^\d$ are chosen by various rules which are described  
below:

\begin{enumerate}[leftmargin = 0.8cm]
\item[$\bullet$] {\it Rule 1}: $\gamma_k^\d = \gamma/L^2$ with $\gamma = 1.98(1-\eta - (1+\eta)/\tau)$;

\item[$\bullet$] {\it Rule 2}: $\gamma_k^\d 
= \max\left\{\frac{\gamma \|r_k^\d\|^2}{\|F'(x_k^\d)^*r_k^\d\|^2}, \bar \gamma\right\}$ 
with $\gamma = 1.98(1-\eta - (1+\eta)/\tau)$ and $\bar \gamma = 600$;

\item[$\bullet$] {\it Rule 3}: $\gamma_k^\d 
= \max\left\{\frac{\gamma_0((1-\eta) \|r_k^\d\| - (1+\eta) \d)\|r_k^\d\|}{\|F'(x_k^\d)^*r_k^\d\|^2}, \bar \gamma\right\}$
with $\gamma_0 = 1.98$ and $\bar \gamma = 600$. 
\end{enumerate}
Here $r_k := F(x_k^\d) - y^\d$. In the following computations, the regularization functionals $\R$ that
we use always satisfy Assumption \ref{ass0} with $\sigma = 1/2$. Thus, the step-size selections for 
$\gamma_k^\d$, as outlined above, are legitimate in accordance with Corollary \ref{Land:cor1} and 
Theorem \ref{Land:thm3}.  

\begin{example}\label{ex:entropy}
{\rm
Consider the linear integral equation of the first kind
\begin{align*}
(Ax)(s) := \int_0^1 \phi(t,s) x(t) dt = y(s), \quad s\in [0,1],
\end{align*}
where $\phi(t,s) := 1+t+s$, and assume the sought solution $x^\dag$ is a probability density function
on $[0,1]$, i.e. $x^\dag \ge 0$ a.e. on $[0,1]$ and $\int_0^1 x^\dag = 1$. Clearly $A$ is a compact linear 
operator from $L^1[0,1]$ to $L^2[0,1]$ and 
\begin{align*}
\|Ax\|_{L^2[0,1]}^2 \le \int_0^1 \max_{0\le t\le 1} |\phi(t,s)|^2 dt \|x\|_{L^1[0,1]}^2 
= \frac{19}{3} \|x\|_{L^1[0,1]}^2
\end{align*} 
which implies $\|A\| \le L:=\sqrt{19/3}$. To determine such a solution we use the regularization functional 
$\R$ given in (d) of Example \ref{ex1} with $\Omega = [0,1]$. The corresponding method (\ref{Land}) becomes 
(\cite{Jin2022})
\begin{align}\label{Land:entropy}
\xi_{k+1}^\d = \xi_k^\d - \gamma_k^\d A^*(A x_k^\d - y^\d), \qquad 
x_{k+1}^\d = e^{\xi_{k+1}^\d}\left/\int_0^1 e^{\xi_{k+1}^\d},\right.
\end{align}
where $y^\d$ is a noisy data satisfying $\|y^\d - A x^\dag\|_{L^2[0,1]} \le \d$ with a noise level $\d>0$.
Clearly, Assumption \ref{ass1} and Assumption \ref{ass1.5} hold with $\eta = 0$ and $\kappa_0 = 0$. 
It is easy to check that if $x^\dag$ satisfies the condition 
\begin{align}\label{sc:entropy}
1 + \log x^\dag \in \mbox{Ran}(A^*),
\end{align}
then it satisfies the source condition (\ref{sc0}) with $\xi_0 = 0$. Consequently, for the integer 
$k_\d$ output by the discrepancy principle, if $x^\dag$ satisfies (\ref{sc:entropy}) then we can 
expect the convergence rate $\|x_{k_\d}^\d - x^\dag\| = O(\sqrt{\d})$ according to our theoretical 
results. 

\begin{table}[ht]
\caption{Numerical results for Example \ref{ex:entropy}, where \texttt{iter} denotes the integer $k_\d$ 
output by the discrepancy principle (\ref{DP0}) with $\tau = 1.01$, \texttt{err} denotes the approximation 
error, i.e. $\texttt{err} = \|x_{k_\d}^\d - x^\dag\|_{L^1}$, and \texttt{ratio} = \texttt{err}/$\sqrt{\d}$.} \label{table1}
\begin {center}
\begin{tabular}{|c|ccc|ccc|ccc|}
    \hline
&  \multicolumn{3}{c|}{Rule 1} 
& \multicolumn{3}{c|}{Rule 2} & \multicolumn{3}{c|}{Rule 3} \\ \cline{2-10}
$\d$  & \texttt{iter} & \texttt{err} & \texttt{ratio}
& \texttt{iter} & \texttt{err} & \texttt{ratio}
& \texttt{iter} & \texttt{err} & \texttt{ratio}\\
\hline
5e-2   & 743  & 5.0723e-2  & 0.2268  & 160  & 5.0659e-2 & 0.2266 & 66  & 5.0275e-2 & 0.2248\\
5e-3   & 2630 & 5.0405e-3  & 0.0713  & 1649 & 5.0215e-3 & 0.0710 & 189 & 4.9796e-3 & 0.0704\\
5e-4   & 4509 & 4.6703e-4  & 0.0209  & 3495 & 4.6610e-4 & 0.0208 & 280 & 4.6441e-4 & 0.0208\\
5e-5   & 6387 & 8.2451e-5  & 0.0117  & 5348 & 8.2431e-5 & 0.0117 & 368 & 8.2427e-5 & 0.0117\\
\hline
\end{tabular}
\end{center}
\end{table}

To numerically check if the convergence rate is achieved, we assume the sought solution $x^\dag$ is 
given by 
$$
x^\dag(t) := e^{1.5a-1+at}, \quad t \in [0.1],
$$
where $a = 0.4949075935\cdots$ is such that $e^{1.5a-1} (e^a-1)/a = 1$ which guarantees $\int_0^1 x^\dag = 1$
so that $x^\dag$ is a probability density function on $[0,1]$. It is easy to see that 
$1 + \log x^\dag = A^* \la^\dag$ with $\la^\dag \equiv a$ on $[0,1]$ and thus the source condition 
(\ref{sc:entropy}) holds. In our numerical computation, we use noisy data $y^\d$ with various noise
level $\d>0$ to reconstruct $x^\dag$. To carry out the computation, all integrals over $[0,1]$ are 
approximated by the trapezoidal rule by partitioning $[0,1]$ into $5000$ subintervals of equal length;
such a fine grid is used for the purpose of reducing the effect of discretization so that we can 
observe the convergence rate in terms of $\d$ more accurately. In Table \ref{table1} we present the 
computational results obtained using the method (\ref{Land:PDE}) with the initial guess $\xi_0 \equiv 0$,
leading to $x_0 \equiv 1$. The method is terminated according to the discrepancy principle (\ref{DP0}) 
with $\tau = 1.01$, and the step-size $\gamma_k^\d$ is selected using Rule 1, Rule 2 and Rule 3. The 
computational outcomes corroborate our theoretical findings discussed in Corollary \ref{Land:cor1} and 
Theorem \ref{Land:thm3}. Moreover, Table \ref{table1} indicates that Rule 3 generally requires fewer 
iterations than Rule 1 and Rule 2. 
}
\end{example}

\begin{example}\label{ex:PDE}
{\rm
Let $\Omega\subset {\mathbb R}^2$ be a bounded domain with a Lipschitz boundary $\p \Omega$. 
Consider determining the coefficient $c$ in the boundary value problem
\begin{align}\label{PDE}
- \triangle u + c u = f \,\,\, \mbox{in } \Omega, \quad u = g \mbox{ on } \p \Omega
\end{align}
from an $L^2(\Omega)$-measurement of $u$, where $f\in H^{-1}(\Omega)$ and $g \in H^{1/2}(\p \Omega)$.
Assuming the sought solution $c^\dag$ is in $L^2(\Omega)$, this nonlinear inverse problem reduces to 
solving $F(c) = u$, where $F: L^2(\Omega) \to L^2(\Omega)$ is defined by $F(c) := u(c)$ with 
$u(c) \in H^1(\Omega) \subset L^2(\Omega)$ being the unique weak solution of (\ref{PDE}). It is known that 
$F$ is well-defined on the set 
$$
D(F) := \{c \in L^2(\Omega): \|c - \hat c\|_{L^2} \le \varepsilon_0 \mbox{ for some } \hat c \ge 0 \mbox{ a.e.}\}
$$
for some positive constant $\varepsilon_0>0$. Furthermore, $F$ is Fr\'{e}chet differentiable, the Fr\'{e}chet 
derivative of $F$ and the adjoint are given by 
$$
F'(c) h = - A(c)^{-1} (h u(c))    \quad \mbox{ and } \quad F'(c)^* w = - u(c) A(c)^{-1} w
$$
for $c\in D(F)$ and $h, w \in L^2(\Omega)$, where $A(c): H_0^1(\Omega) \to H^{-1}(\Omega)$ is defined by 
$A(c) u = - \triangle u  + c u$. Additionally, $F$ satisfies Assumption \ref{ass1} (iii) for a number 
$\eta>0$ which can be very small if $\rho>0$ is sufficiently small (see \cite{EHN1996}). If 
$u(c^\dag) \ge \kappa$ for some constant $\kappa>0$, then Assumption \ref{ass1.5} holds in a neighborhood 
of $c^\dag$; see \cite{HNS1995}.  
 
We are interested in the situation that the sought solution $c^\dag \in L^2(\Omega)$ is nonnegative.
Thus, to reconstruct $c^\dag$, we use the functional $\R$ given in (a) of Example \ref{ex1} with 
$C := \{c\in L^2(\Omega): c\ge 0 \mbox{ a.e.}\}$. Correspondingly the method (\ref{Land}) becomes 
\begin{align}\label{Land:PDE}
\xi_{k+1}^\d = \xi_k^\d - \gamma_k^\d F'(c_k^\d)^*(F(c_k^\d) - u^\d), \qquad 
c_{k+1}^\d = \max\{\xi_{k+1}^\d, 0\},
\end{align}
where $u^\d\in L^2(\Omega)$ is a noisy data satisfying $\|u^\d - u(c^\dag)\|_{L^2(\Omega)} \le \d$
with a noise level $\d>0$. The source condition (\ref{sc0}) can be equivalently stated as 
\begin{align}\label{sc:PDE}
c^\dag = P_C(\xi_0 + F'(c^\dag)^* \la^\dag) = \max\{\xi_0 + F'(c_0)^* \la^\dag, 0\}
\end{align}
for some $\la^\dag \in L^2(\Omega)$, where $P_C$ denotes the orthogonal projection from $L^2(\Omega)$ 
onto $C$. 

\begin{table}[ht]
\caption{Numerical results for Example \ref{ex:PDE}, where \texttt{iter} denotes the integer $k_\d$ output 
by the discrepancy principle (\ref{DP0}) with $\tau = 1.1$, \texttt{err} denotes the approximation error, 
i.e. $\texttt{err} = \|c_{k_\d}^\d - c^\dag\|_{L^2}$, and \texttt{ratio} = \texttt{err}/$\sqrt{\d}$.} \label{table2}
\begin {center}
\begin{tabular}{|c|ccc|ccc|}
    \hline
&  \multicolumn{3}{c|}{Rule 2} 
& \multicolumn{3}{c|}{Rule 3}  \\ \cline{2-7}
$\d$  & \texttt{iter} & \texttt{err} & \texttt{ratio}
& \texttt{iter} & \texttt{err} & \texttt{ratio}  \\
\hline
1e-2   & 19   & 1.7471e-1  & 1.7471  & 2    & 1.7357e-1 & 1.7357 \\
1e-3   & 97   & 4.0901e-2  & 1.2934  & 26   & 4.1922e-2 & 1.3257 \\
1e-4   & 227  & 8.3865e-3  & 0.8386  & 84   & 8.3326e-3 & 0.8333 \\
1e-5   & 548  & 2.5209e-3  & 0.7972  & 365  & 2.5909e-3 & 0.8193 \\
1e-6   & 5423 & 9.3739e-4  & 0.9374  & 5741 & 9.4716e-3 & 0.9472 \\
\hline
\end{tabular}
\end{center}
\end{table}

To test the convergence rate result, we take $\Omega = [0,1]^2$ and assume the sought solution $c^\dag$ 
is given by  
$$
c^\dag (x,y) = \left(\max\{1-9(x^2+y^2), 0\}\right)^2
$$
and the right hand side $f$ is given by $f = -4 + (1+x^2+y^2) c^\dag(x,y)$. Then $u(c^\dag) = 1 + x^2 + y^2$. 
Take the initial guess $\xi_0 = 0$. Then the source condition (\ref{sc:PDE}) holds with 
\begin{align*}
\la^\dag = A(c^\dag) \left(-\frac{c^\dag}{u(c^\dag)}\right) 
= \triangle \left(\frac{c^\dag}{u(c^\dag)}\right) - \frac{(c^\dag)^2}{u(c^\dag)} \in L^2(\Omega).
\end{align*}
Thus, according to our theory, if the step-size $\gamma_k^\d$ is chosen according to those rules stated 
in Corollary \ref{Land:cor1} and Theorem \ref{Land:thm3}, and if $k_\d$ denotes the integer output 
by the discrepancy principle, we can expect $\|c_{k_\d}^\d - c^\dag\|_{L^2(\Omega)} = O(\sqrt{\d})$
as $\d \to 0$. To validate this result, we use noisy data $u^\d$ with various noise level $\d>0$ to 
reconstruct $c^\dag$ by the method (\ref{Land:PDE}). To perform the computation, we set $\eta = 0.04$ and 
divide $\Omega$ into $128\times 128$ small squares of equal size. All partial differential equations are 
solved approximately using a multigrid method (\cite{H2016}) with finite difference discretization. In 
Table \ref{table2} we report the computational results by the method (\ref{Land:PDE}), terminated by 
the discrepancy principle (\ref{DP0}) with $\tau = 1.1$, with the step-size $\gamma_k^\d$ chosen by 
Rule 2 and Rule 3; we do not report the computation result for $\gamma_k^\d$ chosen by Rule 1 because 
the upper bound $L$ is hard to obtain. The computational results confirm our theoretical result presented 
in Corollary \ref{Land:cor1} and Theorem \ref{Land:thm3}. Furthermore, Table \ref{table2} indicates 
that Rule 3 usually requires less number of iterations than Rule 2 particularly when the noise level 
is moderate or not too small.
}
\end{example}

\section*{\bf Acknowledgement} 

The work of Q. Jin is partially supported by the Future Fellowship of the Australian Research Council (FT170100231).

\bibliographystyle{amsplain}

\end{document}